\documentclass[a4paper]{amsart}
\usepackage{amsmath, amssymb}
\usepackage[mathscr]{eucal}
\usepackage[colorlinks,pagebackref]{hyperref}
\pdfoutput=1

\newcommand{\A}{\mathbb{A}}

\newcommand{\C}{\mathbb{C}}

\newcommand{\G}{\mathbb{G}}
\newcommand{\bbH}{\mathbb{H}}
\newcommand{\M}{\mathbb{M}}
\newcommand{\N}{\mathbb{N}}
\newcommand{\Q}{\mathbb{Q}}
\newcommand{\R}{\mathbb{R}}
\newcommand{\Z}{\mathbb{Z}}

\newcommand{\Qp}{\mathbb{Q}_p}

\newcommand{\cA}{\mathcal{A}}
\newcommand{\cB}{\mathcal{B}}
\newcommand{\cC}{\mathcal{C}}
\newcommand{\cD}{\mathcal{D}}
\newcommand{\cE}{\mathcal{E}}
\newcommand{\cG}{\mathcal{G}}
\newcommand{\cH}{\mathcal{H}}
\newcommand{\cL}{\mathcal{L}}
\newcommand{\cM}{\mathcal{M}}
\newcommand{\cN}{\mathcal{N}}
\newcommand{\cO}{\mathcal{O}}
\newcommand{\cP}{\mathcal{P}}
\newcommand{\cS}{\mathcal{S}}
\newcommand{\cT}{\mathcal{T}}
\newcommand{\cV}{\mathcal{V}}

\newcommand{\scA}{\mathscr{A}}
\newcommand{\scB}{\mathscr{B}}

\newcommand{\scD}{\mathscr{D}}
\newcommand{\scE}{\mathscr{E}}
\newcommand{\scG}{\mathscr{G}}
\newcommand{\scH}{\mathscr{H}}
\newcommand{\scK}{\mathscr{K}}
\newcommand{\scM}{\mathscr{M}}
\newcommand{\scN}{\mathscr{N}}
\newcommand{\scP}{\mathscr{P}}

\newcommand{\scT}{\mathscr{T}}
\newcommand{\scU}{\mathscr{U}}
\newcommand{\scZ}{\mathscr{Z}}

\newcommand{\bB}{\mathbf{B}}

\newcommand{\bG}{\mathbf{G}}
\newcommand{\tbG}{\tilde\bG}

\newcommand{\bN}{\mathbf{N}}

\newcommand{\bT}{\mathbf{T}}
\newcommand{\bZ}{\mathbf{Z}}

\newcommand{\gh}{\mathfrak{h}}
\renewcommand{\gg}{\mathfrak{g}}
\newcommand{\gG}{\mathfrak{G}}
\newcommand{\gH}{\mathfrak{H}}
\newcommand{\gm}{\mathfrak{m}}
\newcommand{\gn}{\mathfrak{n}}
\newcommand{\gp}{\mathfrak{p}}
\newcommand{\gq}{\mathfrak{q}}
\newcommand{\gt}{\mathfrak{t}}

\newcommand{\an}{\mathrm{an}}
\newcommand{\cl}{\mathrm{cl}}
\DeclareMathOperator{\Cong}{Cong}
\newcommand{\cpct}{\mathrm{c}}
\newcommand{\cts}{\mathrm{cts}}
\DeclareMathOperator{\Eig}{Eig}

\DeclareMathOperator{\Ext}{Ext}
\newcommand{\fs}{\mathrm{fs}}
\DeclareMathOperator{\Gal}{Gal}
\newcommand{\group}{\mathrm{group}}
\newcommand{\GL}{\mathrm{GL}}
\newcommand{\tH}{\bar{H}}
\newcommand{\Hom}{\mathrm{Hom}}
\newcommand{\im}{\mathrm{Im}}
\newcommand{\ind}{\mathrm{ind}}
\newcommand{\Ind}{\mathrm{Ind}}
\newcommand{\locan}{\mathrm{la}}
\newcommand{\la}{\locan}
\newcommand{\lalg}{\mathrm{loc.alg.}}

\DeclareMathOperator{\Lie}{Lie}

\newcommand{\limp}[1]{\varprojlim_{#1}}
\newcommand{\limd}[1]{\varinjlim_{#1}}

\newcommand{\Null}{\mathrm{null}}
\newcommand{\ord}{\mathrm{ord}}
\newcommand{\pr}{\mathrm{pr}}

\newcommand{\ram}{\mathrm{ramified}}
\newcommand{\rep}{\mathrm{rep}}

\newcommand{\Rest}{\mathrm{Rest}}
\newcommand{\SL}{\mathrm{SL}}
\DeclareMathOperator{\slope}{slope}
\DeclareMathOperator{\Spec}{Spec}
\newcommand{\SO}{\mathrm{SO}}
\newcommand{\sph}{\mathrm{sph}}
\newcommand{\Spin}{\mathrm{Spin}}

\newcommand{\tors}{\mathrm{tors}}

\newcommand{\Repess}{\mathrm{Rep}^\mathrm{ess}}

\newtheorem{definition}{Definition}[subsection]
\newtheorem{theorem}[definition]{Theorem}
\newtheorem{proposition}[definition]{Proposition}
\newtheorem{lemma}[definition]{Lemma}
\newtheorem{corollary}[definition]{Corollary}

\theoremstyle{remark}
\newtheorem{remark}[definition]{Remark}
\newtheorem*{example}{Example}

\begin{document}

\title[$P$-adic interpolation of metaplectic forms]
{$P$-adic interpolation of metaplectic forms of cohomological type}

\author{Richard Hill}
\address{Department of Mathematics, University College London, \\ 
Gower Street, London WC1E 6BT, UK}
\email{rih@math.ucl.ac.uk}

\author{David Loeffler}
\address{Mathematics Institute, University of Warwick,\\
Coventry CV4 7AL, UK}
\email{d.a.loeffler@warwick.ac.uk}
\thanks{The second author's research has been supported by EPSRC Postdoctoral Fellowship EP/F04304X/2.}

\begin{abstract}
Let $\bG$ be a reductive group over a number field $k$.
It is shown how Emerton's methods may be applied to the problem of
$p$-adically interpolating the metaplectic forms on $\bG$, i.e. the
automorphic forms on metaplectic covers of $\bG$, as long as the metaplectic
covers involved split at the infinite places of $k$.
\end{abstract}

\keywords{Metaplectic forms, $p$-adic interpolation, eigenvarieties}

\subjclass[2010]{Mathematics Subject Classification 2010: 11F33, 11F55, 22E50}

\maketitle
\section{Introduction}

\subsection{Metaplectic groups}

Let $k$ be an algebraic number field, $\A$ the ad\`ele ring of $k$, and $\bG$ a connected reductive
group over $k$.
Let $\mu$ be a finite abelian group. 
By a metaplectic cover of $\bG$ by $\mu$, we shall mean\footnote{
The notation $\tbG(\A)$ is standard but a little misleading, since this group is a topological group,
 but is not the group of adelic points of any linear group.} a topological central extension
\[
        1 \to \mu \to \tbG(\A) \to \bG(\A) \to 1,
\]
such that the subgroup $\bG(k)$ of rational points lifts to a subgroup $\hat\bG(k)$ of
$\tbG(\A)$.
We shall recall some results on the construction and properties of metaplectic covers in section 
\ref{sect:metgroups} below.

This paper is concerned with $p$-adic interpolation of automorphic representations of
a metaplectic group $\tbG(\A)$ (specifically, those representations which show up in the cohomology groups of its arithmetic quotients.)
More precisely we prove metaplectic versions of the results of \cite{emerton-interpolation},
\cite{emerton-jacquet1} and \cite{hill07}, which were originally proved there for automorphic representations of $\bG(\A)$.

We shall write $G_{\infty}$ for the Lie group $\bG(k \otimes_{\Q}\R)$ and $G_{\infty}^{\circ}$ for the
connected component of the identity in $G_{\infty}$.
We have a topological central extension of real Lie groups
\begin{equation}
        \label{archimedean}
        1 \to \mu \to \tilde G_{\infty}^{\circ} \to G_{\infty}^{\circ} \to 1,
\end{equation}
where $\tilde G_{\infty}^{\circ}$ is the pre-image of $G_{\infty}^{\circ}$ in $\tbG(\A)$.
We shall divide metaplectic covers into two kinds:
\begin{itemize}
        \item
        \textbf{Type 1}: The extension \eqref{archimedean} splits as a direct
        sum;
        \item
        \textbf{Type 2}: The extension \eqref{archimedean} does not split.
\end{itemize}
In this article we shall be concerned with metaplectic groups of type 1. As we 
point out in Proposition \ref{prop2.2} below, there are many groups $\bG$ for 
which all metaplectic covers of $\bG$ are of type 1, so this restriction is not 
too onerous. Our methods will not apply to type 2 covers, since such covers do 
not possess genuine metaplectic forms of cohomological type. However we note 
that various authors have dealt with the $p$-adic interpolation of metaplectic 
forms of type 2 in certain cases, using very different methods (see for example 
\cite{koblitz86,ramsey-2006,ramsey-2008}).

\subsection{Cohomology of arithmetic quotients}

Assume now that we have a type 1 metaplectic cover of $\bG$.
We fix once and for all a lift $\hat \bG(k)$ of $\bG(k)$ to $\tbG(\A)$.
We also fix a maximal compact subgroup $K_\infty^\circ$ of $G_\infty^{\circ}$.
For a compact open subgroup $K_{f}$ of $\bG(\A_{f})$, we write $Y(K_{f})$ for the corresponding
arithmetic quotient of $\bG$, i.e.
\[
        Y(K_{f})
        =
        \bG(k)\backslash\bG(\A)/K_{\infty}^{\circ}K_{f}.
\]
We shall describe $Y(K_{f})$ in a different way, which is more useful to us.
Suppose that $K_{f}$ is chosen small enough so that it lifts
to a subgroup $\hat K_{f}$ of $\tbG(\A)$.
The existence of such a $K_{f}$ follows from topological considerations, and we
shall fix such a lift $\hat K_{f}$.

As $\tilde\bG$ is of type 1, the group $K_{\infty}^{\circ}$
lifts uniquely to a maximal compact subgroup 
$\hat K_{\infty}^{\circ}$ of $\tilde G_\infty^{\circ}$.
Since $\hat K_{\infty}^{\circ}$ is connected, it follows that $\hat K_{f}$ commutes with $\hat K_{\infty}^{\circ}$,
and so we have the following alternative description of $Y(K_{f})$:
\[
        Y(K_{f})
        =
        \tilde\bG(k)\backslash\tbG(\A)/\hat K_{\infty}^{\circ}\hat K_{f},
\]
where $\tilde\bG(k)$ is the pre-image of $\bG(k)$ in $\tbG(\A)$.

This allows us to define a local system on $Y(K_f)$ for any representation of 
$\tilde\bG(k) \cong \bG(k) \oplus \mu$. In particular, if $W$ is a 
representation of $\bG(k)$ (over some coefficient field $E$) and 
$\varepsilon: \mu \to E^\times$ is a character, we may consider the 
representation $W \otimes \varepsilon$ of $\tbG(k)$. The corresponding local
system $\cV_{W\otimes \varepsilon}$ is given by
\[
        \cV_{W\otimes\varepsilon}
        =
        \tilde \bG(k)\backslash \left((\tbG(\A)/\hat K_{\infty}^{\circ} \hat K_{f})\times (W\otimes\varepsilon) \right).
\]

The cohomology groups of these local systems have an action of the Hecke algebra 
of $\tbG(\A_f)$ with respect to $\hat K_f$. This article will be concerned with 
the interpolation of the systems of Hecke eigenvalues appearing in these 
cohomology groups. More precisely, we shall fix a prime $\gp$ of $k$ above the 
rational prime $p$, and take $W$ to be an algebraic representation of the group 
$\cG=\Rest^{k_{\gp}}_{\Qp}(\bG \times_{k} k_{\gp})$,
where $\Rest^{k_{\gp}}_{\Qp}$ denotes restriction of scalars.
We regard such a
representation $W$ as a representation of $\bG(k)$ via the inclusion 
$\bG(k) \hookrightarrow \bG(k_\gp) \cong \cG(\Qp)$. Our goal is to show that
the Hecke eigenvalues appearing in these spaces move in $p$-adic families as the
weight of $W$ varies.

We will find it more convenient to work with the corresponding smooth representations.
Let us write $\scG$ for the group $\cG(\Q_{p})$. For a prime $\gq$ of $k$ we shall often write
$G_{\gq}$ for the group $\bG(k_{\gq})$. Thus there is a canonical identification $\scG=G_{\gp}$.
For a subgroup $U\subset \bG(\A)$, we shall write $\tilde U$ for the preimage of $U$ in $\tbG(\A)$.
If $U$ is a subgroup of $K_{f}$, then we shall write $\hat U$ for the lift of $U$ to $\hat K_{f}$,
so that $\hat U = \tilde U \cap \hat K_{f}$.
We shall assume that $K_{f}=K_{\gp}K^{\gp}$, where $K_{\gp}$ is a compact open subgroup
of $G_{\gp}$ and $K^{\gp}$ is a compact open subgroup of $\bG(\A^{\gp}_{f})$.
The group $K^{\gp}$ will be called the ``tame level'' and will remain fixed throughout the paper. We thus have a corresponding decomposition $\hat K_f = \hat K_{\gp} \hat K^{\gp}$. 

We define for a fixed tame level:
\[
        H^{\bullet}_{\cl, \varepsilon}(\hat K^{\gp},W)
        :=
        \limd{U \subset K_\gp} H^{\bullet}(Y(UK^{\gp}),\cV_{W\otimes\varepsilon}).
\]
Let $\cH^{\gp}$ be the Hecke algebra of $\tbG(\A_{f}^{\gp})$ with respect to
$\hat K^{\gp}$, and with coefficients in $E$.
The vector spaces $H^{\bullet}_{\cl,\varepsilon}(\hat K^{\gp},W)$ have commuting actions of
 $\tilde \scG=\tilde G_{\gp}$ and $\cH^{\gp}$.
As a $\tilde \scG$-module, $H^{\bullet}_{\cl, \varepsilon}(\hat K^{\gp},W)$
is smooth and admissible.
We may recover the finite level cohomology $H^{\bullet}(Y(K^{\gp}K_{\gp}),\cV_{W\otimes\varepsilon})$
as the subspace of $\hat K_{\gp}$-invariants in $H^{\bullet}_{\cl, \varepsilon}(\hat K^{\gp},W)$.

\subsection{Classical points and Eigenvarieties}

We may replace $\mu$ by $\mu/\ker\varepsilon$ without changing the
classical cohomology groups. We shall therefore assume from now on that the map 
$\varepsilon:\mu \to E^{\times}$ is injective. We shall also assume that the 
coefficient field $E$ is a complete and discretely valued subfield of $\C_p$, 
and that $\cG$ is split over $E$.

Our results are most complete (and easiest to state) when $\cG$ is quasi-split
over $\Qp$. Let us assume for now that this is the case, and choose a Borel 
subgroup $\cB=\cT\cN$ of $\cG$ defined over $\Qp$, with unipotent radical $\cN$ 
and Levi factor $\cT$.
The group $\scN$ of $\Qp$-valued points of $\cN$ has a unique lift $\hat \scN$ to $\tilde \scG$.
For a smooth representation $V$ of $\tilde \scG$, the smooth Jacquet module $V_{\hat \scN}$
is defined to be the space of $\hat \scN$-coinvariants.
This Jacquet module is a smooth representation of the group $\tilde \scT$, which is
the pre-image in $\tilde\scG$ of the group $\scT$ of $\Qp$-valued points of $\cT$.
Applying this smooth Jacquet functor to $H^{\bullet}_{\cl, \varepsilon}(\hat K^{\gp},W)$,
we obtain a representation $H^{\bullet}_{\cl, \varepsilon}(\hat K^{\gp},W)_{\hat \scN}$
of $\tilde \scT \times \cH^{\gp}$, which is smooth and admissible as a representation of $\tilde\scT$.
If $W$ is irreducible, then the contragredient representation $W'$ is also irreducible,
and hence by the highest weight theorem (cf.~\cite[Theorem 1.1.1]{emerton-jacquet1})
$(W')^{\cN}$ is a 1-dimensional algebraic representation
of $\cT$.
Tensoring with this representation, we obtain a locally algebraic representation
$\rep^{s}(W)=\rep^{s}(\hat K^{\gp},\varepsilon,W)$ of $\tilde \scT\times \cH^{\gp}$ defined as follows:
\[
        \rep^{s}(W)
        =
        (W')^{\cN} \otimes_{E} H^{s}_{\cl, \varepsilon}(\hat K^{\gp},W)_{\hat \scN}.
\]

By a \emph{classical point} of cohomological dimension $s$, we shall mean an absolutely irreducible
(and hence finite dimensional) subquotient of $\rep^{s}(W)$ for some $W$.
Under a certain hypothesis (see definition \ref{edgemapcriterion} below) we shall construct a rigid analytic space,
called the eigenvariety, containing all the classical points (independently of the choice of irreducible representation $W$).

Each classical point is of the form $\pi=\pi_{\gp}\otimes \pi^{\gp}$, where $\pi_{\gp}$ is an irreducible locally algebraic representation of $\tilde \scT$ and $\pi^{\gp}$ is an irreducible representation of $\cH^{\gp}$.
Let ${Z}$ be the centre of $\tilde \scT$.
Recall that by the Stone-von Neumann Theorem for smooth representations of metaplectic tori \cite[Theorem 3.1]{weissman09}, the representation $\pi_{\gp}$ is determined by its restriction to ${Z}$ (here we have used the assumption that $\varepsilon$ is injective).
We may therefore regard $\pi_{\gp}$ as a point of the rigid analytic space $\widehat{{Z}}$ of locally analytic characters of ${Z}$.

There is a decomposition $\cH^{\gp}=\cH^{\sph}\otimes \cH^{\ram}$, where $\cH^{\sph}$ is commutative
and $\cH^{\ram}$ is finitely generated over $E$.
This gives us a further decomposition $\pi^{\gp}=\pi^{\sph}\otimes \pi^{\ram}$, where $\pi^{\sph}$ is
in $\Spec(\cH^{\sph})$.
Thus a classical point gives rise to a point $(\pi_{\gp},\pi^{\sph})$ of $\widehat Z \times \Spec(\cH^{\sph})$.
We shall show that there is a rigid analytic subspace
$\Eig^{s} \subset \widehat {{Z}} \times \Spec(\cH^{\sph})$
containing all the classical points of cohomological dimension $s$,
such that the projection $\Eig^{s} \to \widehat Z$ is finite, with discrete fibres.
In particular, the dimension of $\Eig^{s}$ is at most the dimension of $\cT$.

We shall see that there is a representation
$J^{s}=J^{s}(\hat K^{\gp},\varepsilon)$ of $\tilde \scT\times \cH^{\gp}$, together with injective homomorphisms
$\rep^{s}(W) \to J^{s}$ for each $W$, so that the representation $J^{s}$ interpolates
all of the classical points.
As a representation of $\tilde \scT$, $J^{s}$ is locally analytic, and we shall prove
that $J^{s}$ is essentially admissible in the sense of \cite{emerton-memoir}
(see definition \ref{essen-admiss} below).
For such a representation, there is a corresponding coherent sheaf $\cE$ on $\widehat Z$,
such that the fibre at $\chi\in \widehat Z$ is isomorphic (as an $\cH^{\gp}$-module)
 to the contragredient of the $\chi$-eigenspace in $J^{s}$.
The eigenvariety is defined to be the relative spectrum $\Eig^{s}=\Spec(\cA)$,
where $\cA$ is the image of $\cH^{\sph}$ in
the sheaf of endomorphisms of $\cE$.
Localizing $\cE$ over $\cA$, we obtain a canonical sheaf $\cM$ on $\Eig^{s}$.
The sheaf $\cM$ allows us to recover the action of $\cH^{\ram}$ on $J^{s}$.

Note that each classical point on the eigenvariety has a $\widehat {{Z}}$-coordinate which is locally algebraic.
It would be useful to know whether the converse is true, i.e. given a point of the eigenvariety whose $\widehat{{Z}}$-coordinate is locally algebraic, can we conclude that the point is a classical point?
We prove that if $\chi\in\widehat {{Z}}$ is locally $(W')^{\cN}$-algebraic and has \emph{non-critical slope}
(see definition \ref{non-crit-slope} below),
then the map of $\chi$-eigenspaces $\rep^s(W)[\chi] \to J^{s}[\chi]$ is bijective.
Hence every point on the eigenvariety with $\widehat {{Z}}$-coordinate $\chi$ is classical.

The representation $J^{s}$ together with the maps $\rep^{s}(W)\to J^{s}$ are defined in two steps:
completed cohomology and the locally analytic Jacquet functor.
We shall briefly discuss these constructions in the next two paragraphs of this introduction.

\subsection{Completed cohomology}

Recall that we have fixed a prime $\gp$ of $k$ above $p$. Let $\cO_E$ be the 
valuation ring of $E$.

We define the \emph{completed cohomology spaces} $\tH^{s}=\tH^{s}(\hat K^{\gp},E)$ 
as follows:\footnote{The definition of these spaces generalises the spaces 
denoted by $\tilde{H}^s(...)$ in \cite{emerton-interpolation}. We denote them 
here with a horizontal line rather than a tilde in order to avoid conflict with 
our use of the tilde sign to denote preimages in $\tbG(\A)$ of subgroups of 
$\bG(\A)$.} The choice of $\hat K_f$ determines a $\mu$-covering space 
$\tilde Y(\hat K_{f})$ of $Y(K_{f})$, defined by
\[
        \tilde Y(\hat K_{f})
        =
        \hat\bG(k)\backslash\tbG(\A)/\hat K_{\infty}^{\circ}\hat K_{f}.
\]
We set
\[
        \tH^{s}(\hat K^{\gp}, E)
        =
        \left(\limp{n}\limd{K_{\gp}} H^{s}(\tilde Y(\hat K^{\gp}\hat K_{\gp}),\cO_E / p^{n}) \right)
        \otimes_{\cO_E} E.
\]
The completed cohomology spaces are naturally Banach spaces over $E$,
with commuting actions of $\tilde \scG = \tilde G_{\gp}$ and the Hecke algebra 
$\cH^{\gp}$. We show in section \ref{sect:htilde} that $\tH^{s}$ is an 
admissible continuous representation of $\tilde \scG$ in the sense of 
\cite{ST-banach} (see definition \ref{cts-admiss} below). We shall write 
$\tH^{s}_{\varepsilon}(\hat K^{\gp}, E)$ for the $\varepsilon'$-eigenspace for 
the action of $\mu$ on $\tH^{s}(\hat K^{\gp}, E)$.

Let $\gg$ be the Lie algebra of $\cG$ over $\Qp$.
There is a natural action of $\gg$ on the subspace $\tH^{s}_{\varepsilon,\la}$
of $\Qp$-locally analytic vectors in $\tH^{s}_{\varepsilon}$.
We show (see Corollary \ref{spectral-sequence})
that there is a spectral sequence relating this action to the classical cohomology spaces
for any algebraic representation $W$:
\[
        \Ext^{r}_{\gg}(W' , \tH^{s}_{\varepsilon,\la})
        \Rightarrow
        H^{r+s}_{\cl, \varepsilon}(\hat K^{\gp}, W).
\]
Here $W'$ denotes the contragredient of $W$.
In particular, we have an edge map
\begin{equation}
        \label{edgemap}
        H^{s}_{\cl,\varepsilon}(\hat K^{\gp},W)
        \to
        \Hom_{\gg}(W', \tH^{s}_{\varepsilon,\la}).
\end{equation}
This map is a homomorphism of smooth $\tilde \scG\times \cH^{\gp}$-representations.
Written another way, this gives a homomorphism of locally analytic representations:
\begin{equation}
        \label{edgemap2}
        W'\otimes H^{s}_{\cl,\varepsilon}(\hat K^{\gp},W)
        \to
        \tH^{s}_{\varepsilon,\la}.
\end{equation}

\begin{definition}
        \label{edgemapcriterion}
        We say the tuple $(\tilde\bG,\gp, \varepsilon,\hat K^{\gp},s)$ satisfies
        \emph{Emerton's edge map criterion}
        if for every $W$ the map \eqref{edgemap} is an isomorphism,
        or equivalently if the image of \eqref{edgemap2} is the set
        of all $W'$-locally algebraic vectors in $\tH^{s}_{\varepsilon}$.
\end{definition}

Our results concerning eigenvarieties described above depend on the edge map criterion.
It is therefore rather important to know that this criterion holds in a number of cases. It is clear that the criterion is satisfied for $s = 0$. We prove the following result for $s = 1$ and $s = 2$, generalizing the results of \cite{hill07} to the metaplectic case.

\begin{theorem}
        Suppose $\bG$ is semi-simple and simply connected and has positive real 
        rank (i.e.~$G_\infty$ is not compact). Then the edge map \eqref{edgemap} 
        is an isomorphism in dimension $s=1$. If in addition $\bG$ has finite 
        congruence kernel and $\varepsilon$ is non-trivial then the edge map is 
        an isomorphism in dimension $s=2$.
\end{theorem}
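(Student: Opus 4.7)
The plan is to analyse the Emerton spectral sequence
\[
E_2^{r,s} = \Ext^r_\gg(W', \tH^s_{\varepsilon, \la}) \Longrightarrow H^{r+s}_{\cl, \varepsilon}(\hat K^\gp, W)
\]
from Corollary \ref{spectral-sequence}, and to show that the relevant $E_2$-terms vanish so that the edge map in the indicated degree becomes an isomorphism. The edge map \eqref{edgemap} in degree $s$ is an iso precisely when $E_\infty^{p,s-p} = 0$ for $p \geq 1$ and all outgoing differentials from position $(0,s)$ vanish. The strategy is to show that $\tH^0_\varepsilon$ (and, for the $s=2$ part, also $\tH^1_\varepsilon$) is finite-dimensional with trivial $\gg$-action, so that the relevant Ext-groups can be computed via Whitehead's lemmas together with a Casimir argument.

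First I would handle $\tH^0_\varepsilon$. Since $\bG$ is semi-simple, simply connected and $G_\infty$ is not compact, strong approximation implies that the cardinality of $\pi_0(\tilde Y(\hat K_f))$ is uniformly bounded in $K_f$; hence $\tH^0_\varepsilon$ is finite-dimensional and the $\gg$-action factors through the discrete group $\pi_0(\tilde\scG)$, so is trivial. As $\gg$ is semi-simple, Whitehead's first and second lemmas give $H^i(\gg, W) = 0$ for $i = 1, 2$ and every finite-dimensional $W$, whence $\Ext^i_\gg(W', \tH^0_\varepsilon) \cong H^i(\gg, W) \otimes_E \tH^0_\varepsilon = 0$ for $i = 1, 2$. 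For $s=1$ this suffices: $E_\infty^{1,0} = E_2^{1,0} = 0$, and the only potentially nonzero outgoing differential $d_2 \colon E_2^{0,1} \to E_2^{2,0} = 0$ vanishes, so $H^1_{\cl,\varepsilon} \cong E_2^{0,1} = \Hom_\gg(W', \tH^1_{\varepsilon,\la})$.

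For $s=2$, I would next establish the analogous statement for $\tH^1_\varepsilon$. The finite congruence kernel hypothesis (CSP) implies that for $\bG$ simply connected semi-simple with $G_\infty$ non-compact, the abelianisations of congruence subgroups of $\bG(k)$ are finite, so the classical $H^1(Y(K_f), \cV_{E\otimes\varepsilon})$ is uniformly bounded as $K_f$ shrinks. Combined with a Lyndon--Hochschild--Serre argument for the central extension $1 \to \mu \to \tilde\Gamma \to \Gamma \to 1$---in which the hypothesis $\varepsilon \neq 1$ is used to kill contributions of the trivial $\mu$-character---this shows that $\tH^1_\varepsilon$ is finite-dimensional with trivial $\gg$-action. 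Whitehead then yields $\Ext^i_\gg(W', \tH^1_\varepsilon) = 0$ for $i = 1, 2$, so $E_2^{1,1} = E_2^{2,1} = 0$; together with the $s=0$ vanishings this forces $E_\infty^{2,0} = E_\infty^{1,1} = 0$ and kills the first outgoing differential $d_2 \colon E_2^{0,2} \to E_2^{2,1}$.

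The hard part will be the differential $d_3 \colon E_3^{0,2} \to E_3^{3,0}$. For $W$ non-trivial irreducible, the Casimir of $\gg$ acts on $W$ by a non-zero scalar but trivially on $E$, and its centrality in $U(\gg)$ forces $H^r(\gg, W) = 0$ for \emph{every} $r$; in particular $E_2^{3,0} = 0$ and $d_3$ vanishes automatically. For the trivial representation $W=E$, however, $H^3(\gg, E)$ is generically nonzero (it contains the Chevalley--Eilenberg class), and this argument fails. To close the argument in this remaining case one would adapt the direct comparison between classical and completed cohomology of $\tilde Y$ carried out in \cite{hill07}, using that under CSP and $\varepsilon \neq 1$ the twisted cohomology groups $H^{\bullet}(\Gamma, E_{\varepsilon\circ\epsilon})$ are uniformly controlled and the stratification of $\tH^{\bullet}_\varepsilon$ by locally algebraic vectors becomes explicit enough to verify surjectivity of the edge map directly. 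This is where I expect the main technical obstacle to lie.
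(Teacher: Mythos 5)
The crux of the paper's argument is an outright vanishing statement that your proposal replaces by a weaker finiteness claim, and this is exactly what sinks your $s=2$ argument. By strong approximation, when $\bG$ is semi-simple, simply connected and of positive real rank, the arithmetic quotient $Y(K^\gp K_\gp)$ is connected; since $\tH^0(\hat K^\gp, E)$ is by definition the $H^0$ of the local system $\cV(\tilde K_\gp)$, whose fibre is $\cC(\tilde K_\gp)$, connectedness forces $\tH^0(\hat K^\gp, E)$ to be exactly the one-dimensional space of constant functions, on which the left translation action of $\mu$ is trivial. Hence $\tH^0_\varepsilon = 0$ \emph{identically} when $\varepsilon$ is non-trivial. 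This is far stronger than your claim that $\tH^0_\varepsilon$ is merely finite-dimensional with trivial $\gg$-action; it annihilates $E_2^{r,0} = \Ext^r_\gg(W', \tH^0_\varepsilon)$ for \emph{every} $r$ and \emph{every} $W$, including the trivial $W$.

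This vanishing is precisely what is missing from your treatment of the differential $d_3 \colon E_3^{0,2} \to E_3^{3,0}$, and you correctly flag this as the sticking point: Whitehead's first and second lemmas do not control $\Ext^3_\gg(W', \tH^0_\varepsilon)$ for $W$ trivial, since $H^3(\gg, E)$ is generically non-zero for semi-simple $\gg$. Once $\tH^0_\varepsilon = 0$ is established, $E_2^{3,0} = 0$ for free and your proposed fallback (a direct comparison in the style of \cite{hill07}) is unnecessary. The hypothesis $\varepsilon \neq 1$ in the $s=2$ part of the theorem is there exactly to ensure $\tH^0_\varepsilon = 0$, not merely its finite-dimensionality. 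Similarly, the paper's identification $\tH^1(\hat K^\gp, E) \cong \Hom_{\cts, \hat K^\gp}(\Cong_0, E)$, which vanishes outright under the finite congruence kernel hypothesis, replaces your Lyndon--Hochschild--Serre/abelianisation sketch with a stronger statement killing $E_2^{r,1}$ for all $r$ at once. Your $s=1$ argument via Whitehead on a finite-dimensional trivial $\gg$-module is correct (and is genuinely more flexible, since it would apply even if $\varepsilon$ were trivial), but the $s=2$ argument has a real gap that cannot be closed by Whitehead alone; you need the vanishing of $\tH^0_\varepsilon$.
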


\subsection{The locally analytic Jacquet module}

We then turn to finding an analogue in this situation of the locally analytic
Jacquet module construction of \cite{emerton-jacquet1}.
Let $\cP$ be a parabolic subgroup of $\cG$ defined over $\Qp$,
with unipotent radical $\cN$ and Levi factor $\cM$.
In line with our earlier notation, we shall write $\scP$, $\scN$ and $\scM$ for the
groups of $\Qp$-valued points of $\cP$, $\cN$ and $\cM$ respectively.
For each such $\cP$, we define a left-exact functor $J_{\cP}$
from locally analytic representations of $\tilde \scP$ to those of
$\tilde \scM$.

Any smooth or locally algebraic representation may be regarded as a locally analytic
representation, and so we may apply $J_{\cP}$ to such representations.
We shall prove the following, which determines $J_{\cP}$ on locally algebraic representations:

\begin{theorem}
        If $X$ is a smooth admissible representation of $\tilde \scG$
        and $W$ is an algebraic representation of $\cG$, then
        \[
                J_{\cP}(X \otimes W)
                =
                X_{\hat \scN} \otimes W^\cN,
        \]
        where $\hat \scN$ is the unique lift to $\tilde \scG$ of $\scN$.
\end{theorem}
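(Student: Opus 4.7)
My plan is to mimic Emerton's proof in \cite{emerton-jacquet1} for the linear case, the point being that since $\hat \scN$ is the \emph{unique} lift of $\scN$ to $\tilde\scG$ and the metaplectic extension splits canonically over every unipotent subgroup, the construction of $J_{\cP}$ should be formally identical to the linear case (with every subgroup $\scN_0 \subset \scN$ replaced by its canonical lift $\hat\scN_0$). Recall that for a locally analytic representation $V$ of $\tilde \scP$ the module $J_{\cP}(V)$ is built by first taking $\hat\scN_0$-invariants for some compact open $\hat\scN_0 \subset \hat\scN$, equipping them with a ``Hecke'' action of the positive submonoid $\tilde\scM^+ \subset \tilde\scM$ coming from contraction of $\hat\scN_0$, and then passing to the locally analytic and $\tilde\scM^+$-Hecke-finite part.

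The key step is the computation of $\hat\scN_0$-invariants: I claim that for every sufficiently small $\hat\scN_0$,
\[
(X \otimes W)^{\hat\scN_0} \;=\; X^{\hat\scN_0} \otimes W^{\cN}.
\]
The inclusion $\supseteq$ is trivial. For $\subseteq$, write a given invariant element as $\sum x_i \otimes w_i$ with the $x_i$ linearly independent. Since $X$ is smooth, each $x_i$ is fixed by some open subgroup $U_i \subset \hat\scN_0$; taking $U = \bigcap U_i$, invariance under $U$ together with linear independence of the $x_i$ forces each $w_i$ to lie in $W^U$. Because $\cN$ is connected and unipotent, any open subgroup of $\hat\scN \cong \scN$ is Zariski-dense in $\cN$, and so $W^U = W^\cN$. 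A symmetric argument now using linear independence of the $w_i$ (which may be chosen inside the finite-dimensional $W^\cN$) forces $x_i \in X^{\hat\scN_0}$, establishing the claim.

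Next I would check that this identification is equivariant for the Hecke-style action of $\tilde\scM^+$. Since $\tilde\scM$ normalizes the canonical lift $\hat\scN$, it preserves $\hat\scN_0$-invariants, and acts on the two factors separately: on $X^{\hat\scN_0}$ by the usual Hecke action (which, after Hecke localization, yields the smooth Jacquet module $X_{\hat\scN}$ by Casselman's theorem in Emerton's formulation), and on $W^\cN$ by the restriction of the algebraic $\scM$-action (through the projection $\tilde\scM \to \scM$, since $W$ is pulled back from $\cG$). Passing to the colimit over shrinking $\hat\scN_0$ and taking the locally analytic/Hecke-finite part then gives $J_{\cP}(X\otimes W) = X_{\hat\scN} \otimes W^\cN$, as $X_{\hat\scN}$ is already smooth and $W^\cN$ is already algebraic, so no further analytification is needed.

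The main obstacle is the equivariance verification in the third step: one must carefully track the modulus-character normalization that Emerton builds into the definition of $J_\cP$ and check that when the input is a tensor product it distributes correctly between the two factors, without any spurious twist coming from the metaplectic cover. This is largely a bookkeeping exercise and works out cleanly because $\tilde\scM$ stabilizes the canonical lift $\hat \scN$ and because its action on $W$ factors through $\scM$; so the central extension contributes only the replacement of $\scM$ by $\tilde\scM$ on the right-hand side, not any extra character. The argument ultimately reduces to the linear-group statement proved by Emerton.
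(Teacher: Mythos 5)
Your proof is correct, and the overall strategy (reduce to Emerton's Proposition 4.3.6 by showing the $\hat\scN_0$-invariants of the tensor product factor, then pass to the finite-slope part and invoke the smooth-representation case) is the same as the paper's. The one place where you diverge is the key invariants computation $(X\otimes W)^{\hat\scN_0} = X^{\hat\scN_0}\otimes W^{\cN}$: you argue by expanding an invariant vector as $\sum x_i\otimes w_i$ with linearly independent $x_i$, using smoothness to shrink to a common open $U\subset\hat\scN_0$, and then invoking Zariski density of $U$ in $\cN$ to force $w_i\in W^{\cN}$, followed by re-expanding with $w_i$ linearly independent in $W^{\cN}$ to get $x_i\in X^{\hat\scN_0}$. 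The paper instead works through the Lie algebra $\gn$ of $\hat\scN$: since $\gn$ kills the smooth factor, $(X\otimes W)^{\gn}=X\otimes W^{\gn}$, and since $\hat\scN$ is connected and the action on $W$ is algebraic, $W^{\gn}=W^{\hat\scN_0}$, giving $(X\otimes W)^{\hat\scN_0}=X^{\hat\scN_0}\otimes W^{\cN}$ in one pass. Both are valid; the Lie-algebra route avoids the double linear-independence/re-expansion step and makes the role of connectedness of $\hat\scN$ more transparent, while yours is more elementary and makes no reference to the infinitesimal action. One small remark: your worry about a modulus-character twist is unwarranted for the normalization used here, since $\pi_{\hat\scN_0}(m)$ is defined by averaging over $\hat\scN_0$ with total mass $1$, so the $\tilde\scM^+$-action on $W^{\cN}$ really is just the algebraic $\scM$-action pulled back along $\tilde\scM\to\scM$, which is precisely what lets the paper apply Emerton's Proposition 3.2.9 to peel off the $W^{\cN}$ factor before invoking the smooth case.
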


In particular, the locally analytic Jacquet functor coincides with the smooth Jacquet functor on smooth representations.

Suppose again that $\cG$ is quasi-split over $\Qp$ and let $\cB$ be a Borel subgroup with Levi subgroup $\cT$.
The representation $J^{s}$ discussed above is defined as follows:
\[
        J^{s} = J_{\cB}\left(\tH^{s}_{\varepsilon}(\hat K^{\gp},E)_{\la}\right).
\]
Applying the Jacquet functor $J_{\cB}$ to the map \eqref{edgemap2} we get the required map
$\rep^{s}(W) \to J^{s}$. 
Assuming the edge map criterion, we know that \eqref{edgemap2} is injective with closed image.
Hence by left exactness, we conclude that the map $\rep^{s}(W)\to J^{s}$ is injective.

Roughly speaking, the eigenvariety $\Eig^s$ defined above is the set of characters of $\widehat{{Z}} \times \mathcal{H}^{\sph}$ appearing in $J^s$, where $\mathcal{H}^{\sph}$ is the spherical part of the Hecke algebra of $\hat K^{\gp}$. If the edge map criterion holds, then since $J^s$ contains $\rep^s(W)$ for every $W$, this space contains all characters arising from automorphic representations of $\tbG(\A)$ which are cohomological in degree $s$, which have a $\hat K^{\gp}$-fixed vector, and whose local factor at $\gp$ is principal series. We also give a ``classicality criterion'', i.e. a sufficient condition for a point $(\chi, \lambda)$ of $\Eig(J^s)$ to appear in the classical cohomology $\rep^s(W)$.

The completed cohomology $\tH^{s}_{\varepsilon}$ is a continuous admissible representation of $\tilde \scG$.
From this, it follows that its subspace $\tH^{s}_{\varepsilon,\la}$ of locally analytic vectors
is a strongly admissible locally analytic representation of $\tilde \scG$ in the sense of \cite{emerton-memoir}.
In order to show that $J^{s}$ is essentially admissible, we prove the following.

\begin{theorem}
        If $V$ is an essentially admissible locally analytic representation of $\tilde \scG$,
        then $J_{\cP}(V)$ is an essentially admissible locally analytic representation of $\tilde \scM$.
\end{theorem}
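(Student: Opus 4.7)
The plan is to mimic Emerton's proof of the analogous result in the non-metaplectic setting, reducing to it via the finite central extension structure provided by $\mu$.

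First, since $\mu$ is a finite central subgroup of $\tilde\scG$, any locally analytic representation $V$ of $\tilde\scG$ decomposes as a finite direct sum $V = \bigoplus_{\eta \in \widehat\mu} V_{\eta}$ of $\mu$-isotypic components. Each $V_{\eta}$ is itself essentially admissible, and since $\mu$ is central the functor $J_{\cP}$ respects this decomposition. As a finite direct sum of essentially admissible representations is essentially admissible, I may assume that $\mu$ acts on $V$ through a single character $\eta$.

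Second, recall the construction of $J_{\cP}(V)$ from the previous theorem: one takes the $\hat\scN_0$-invariants in $V$ for a suitable compact open subgroup $\scN_0 \subset \scN$ (which lifts canonically to $\hat\scN_0 \subset \tilde\scG$ by the unipotence of $\scN$), then passes to the subspace of vectors locally finite under a semigroup of strictly dominant elements of $\tilde\scM$, equipped with an appropriate topology.

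Third, essential admissibility amounts, after dualising, to showing that the strong dual $J_{\cP}(V)'_b$ is coadmissible as a module over the Fr\'echet--Stein algebra $\cC^{\an}(\widehat{Z(\tilde\scM)}, E) \,\hat\otimes_E\, D(M_0, E)$ for some compact open subgroup $M_0$ of $\tilde\scM$. Emerton's proof in the non-metaplectic case identifies the dual of $J_{\cP}$ with a concrete construction --- base change along a map of distribution algebras combined with pullback to a subvariety of the character variety --- and shows this preserves coadmissibility. Working in the fixed $\eta$-component, the distribution algebra $D(\tilde M_0, E)$ is a direct summand of a finite extension of $D(M_0', E)$ for $M_0'$ the image of $\tilde M_0$ in $\scM$, while $\widehat{Z(\tilde\scM)}$ becomes a finite \'etale cover of (a direct factor of) $\widehat{Z(\scM)}$; Emerton's argument should then transfer essentially verbatim.

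The main obstacle I anticipate is the bookkeeping around the relationship between $Z(\tilde\scM)$ and the preimage of $Z(\scM)$ in $\tilde\scM$, which need not coincide. After fixing a character of $\mu$ one must identify $\widehat{Z(\tilde\scM)}$ as a finite cover of $\widehat{Z(\scM)}$ in a manner compatible with the construction of $J_{\cP}$, and verify that the coadmissibility condition translates correctly across this identification. Once this structural point is pinned down, the remainder of the argument is formally parallel to Emerton's.
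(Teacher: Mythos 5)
The paper's proof proceeds differently from yours, and your plan has a gap you have not closed.

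The paper does not reduce the theorem to the non-metaplectic case by a base-change/finite-cover argument on distribution algebras. Instead, it re-runs Emerton's proof \emph{inside} $\tilde\scG$, observing that the argument is entirely formal except for one group-theoretic input: Proposition 4.1.6 of \cite{emerton-jacquet1}, which produces a cofinal descending chain of good analytic open subgroups $\scH_n$ of $\scG$, each admitting a rigid-analytic Iwahori decomposition $\bar\N_n \times \M_n \times \N_n$ with respect to every pair of opposite parabolics, and satisfying conjugation-compatibility conditions for elements of $Z_\scM$. The paper proves the metaplectic analogue (Proposition \ref{emerton-4.1.6-met}): one takes $\scH_0$ small enough to sit inside a compact open subgroup $\scK$ that lifts compatibly with the canonical lifts $\hat\scN$, $\hat{\bar\scN}$, lifts each $\scH_n$ to $\hat\scH_n \subset \hat\scK$, and then checks properties (iv)--(vi) using the uniqueness of $\hat\scN$ (so conjugation by $\tilde z$ is controlled by conjugation by $\pr(\tilde z)$) and Lemma \ref{finite-index-centre} (so a sufficiently high power of a strictly dominant $z \in Z_\scM$ lies in $\pr(Z_{\tilde\scM})$). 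Once this proposition is in place, the rest of Emerton's proof is quoted verbatim.

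Your plan instead tries to transport the \emph{conclusion} of Emerton's theorem across the extension. This has two problems. First, you cannot apply Emerton's Jacquet functor to $V$: there is no subgroup of $\tilde\scG$ isomorphic to $\scG$ on which $V$ could be restricted, so $J_\cP^{\scG}$ is never defined on anything relevant, and you would have to reformulate the argument at the level of coadmissible modules rather than representations. Second, and more seriously, the heart of Emerton's coadmissibility proof is not the abstract Fr\'echet--Stein formalism (which would indeed transport along a finite map of algebras); it is the geometric input of the Iwahori decomposition and the specific family $\{\scH_n\}$, used to write the dual of $V^{\hat\scN_0}$ as a projective limit with the requisite finiteness over each rung. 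Your "should then transfer essentially verbatim" silently assumes that this family already exists in $\tilde\scG$ with the right compatibility with $Z_{\tilde\scM}$, which is precisely the content of Proposition \ref{emerton-4.1.6-met} --- you have named the symptom (the mismatch between $Z_{\tilde\scM}$ and the preimage of $Z_\scM$) but not the cure (the cofinal family of lifted good analytic subgroups together with Lemma \ref{finite-index-centre}). Until that structural lemma is established, the reduction does not go through.
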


\subsection{Representations of metaplectic tori}

A novel aspect of the metaplectic case, compared to the theory for algebraic groups, is that the representations $J^s$ constructed in the above fashion are representations of a non-abelian group (a metaplectic extension of a torus). Essentially admissible locally analytic representations of abelian locally analytic groups are well-understood, and may be interpreted as coherent sheaves on a rigid-analytic space, which is the method used in \cite{emerton-interpolation} in order to construct eigenvarieties. 

In the method described above, we have restricted our representation $J^{s}$ of $\tilde \scT$
to the centre ${{Z}}$ of $\tilde \scT$ in order to construct the eigenvariety.
This construction has certain drawbacks.
The first problem is to do with the ramified part of the Hecke algebra.
Let us suppose that we have an absolutely irreducible $\tilde\scT\times \cH^{\gp}$-subrepresentation
$\pi_{\gp}\otimes\pi^{\sph}\otimes \pi^{\ram}$ of $J^{s}$,
and let $\chi:Z\to E^{\times}$ be the central character of $\pi_{\gp}$.
We have a corresponding point $(\chi,\pi^{\sph})$ in the eigenvariety.
One would ideally like recover the representation $\pi^{\ram}$ in the dual space
of the fibre of the sheaf $\cM$ at the point $(\chi,\pi^{\sph})$.
However, with the construction described above the contribution to the
fibre is $(\pi^{\ram})^{*d}$, where $d$ is the dimension of $\pi_{\gp}$.
In a sense, this means that the sheaf $\cM$ is $d$ times as big as we would like.

The second drawback is that the field of definition of a point of the eigenvariety is not exactly the
same as the field of definition of the corresponding absolutely irreducible representation.
More precisely, suppose that $\chi:{Z} \to E^{\times}$ is a locally analytic character, and let $\pi_{\gp}$
be the corresponding absolutely irreducible locally analytic representation of $\tilde\scT$.
It can quite easily happen that $\pi_{\gp}$ is not defined over $E$, but only over some finite extension.
The field of definition of a point on the eigenvariety will only see the field of definition of $\chi$, since
we have restricted to ${Z}$ in our construction.
To some extent this problem is unavoidable, since $\pi_{\gp}$ will often have no unique
minimal field of definition.

In section \ref{sect:stonevonneumann}, we show that both of these problems may be resolved
assuming a certain ``tameness'' condition on the group $\tilde\scT$.
Assuming the tameness condition,  we show that there is an equivalence of categories
between the essentially admissible locally analytic representations of $\tilde\scT$ extending $\varepsilon$,
and the essentially admissible locally analytic representations of $Z$ extending $\varepsilon$.
In particular, this implies that that character $\chi$ and the corresponding representation
$\pi_{\gp}$ have the same field of definition.
Applying this equivalence of categories to $J^{s}$ instead of simply restriction of representations,
we obtain a slightly different coherent sheaf $\cE$ on $\widehat{Z}$. This gives rise to a slightly
different sheaf $\cM$ on the eigenvariety, and this new sheaf has the correct multiplicities of
representations of $\cH^{\ram}$.
Finally, we show that if $\bG$ is semi-simple, simply connected and split over $k_{\gp}$ and if
$p$ does not divide the order of $\mu$, then $\tilde\scT$ satisfies the tameness condition.


\subsection{Relation to the work of Emerton}

In this paper we rely heavily on the work of Emerton. The results and definitions of this paper are analogues of those obtained by Emerton in the case of algebraic groups
(as opposed to metaplectic groups). The locally analytic Jacquet functor for representations of reductive algebraic groups was introduced in \cite{emerton-jacquet1}, and completed cohomology was introduced in \cite{emerton-interpolation}, where it was used to construct eigenvarieties for algebraic groups.
In many parts of this paper -- particularly in section \ref{sect:jacquet} -- the proofs of our results very closely follow those of Emerton, and rather than reproducing the intricate proofs in full, we have simply indicated how the original arguments need to be modified in order to apply to the metaplectic case.

%
%
%

\section{Classical cohomology of metaplectic groups}

In this section, we shall recall some standard results on metaplectic groups, and recall the construction of admissible smooth representations arising from the cohomology of arithmetic quotients of these groups.

\subsection{Metaplectic groups}
\label{sect:metgroups}

As before, we let $\bG$ be a connected reductive group over an algebraic number field $k$.
It is shown in \cite{deligne96} that if $k$ contains a primitive $m$-th root of unity,
then there is a canonical non-trivial metaplectic extension of $\bG$ by the group
$\mu_{m}$ of all $m$-th roots of unity in $k$, and also a canonical lift $\hat \bG(k)$.
If $\bG$ is absolutely simple and algebraically simply connected then there is a universal metaplectic extension,
whose kernel is the group of all roots of unity in $k$.
The universal metaplectic cover coincides with Deligne's cover.
However for our purposes, it is sufficient to choose a metaplectic cover, together with a lift $\hat\bG(k)$.

Let $K_{\infty}^{\circ}\subset G_{\infty}^{\circ}$ be a maximal compact subgroup.
We have a topological central extension of compact Lie groups:
\begin{equation}
        \label{compact}
        1 \to \mu \to \tilde K_{\infty}^{\circ} \to K_{\infty}^{\circ} \to 1.
\end{equation}

Recall that $G_{\infty}^\circ$ has an Iwasawa decomposition as the product of $K_\infty^\circ$ and a uniquely divisible, topologically contractible group $H$. This group $H$ therefore has a unique lift to a subgroup $\hat H$ of $\tilde G_\infty^\circ$, so we have a corresponding Iwasawa decomposition $\tilde G_{\infty}^\circ = \tilde K_\infty^\circ \hat H$. Thus the inclusions $K_{\infty}^{\circ}\hookrightarrow G_{\infty}^{\circ}$ and
$\tilde K_{\infty}\hookrightarrow\tilde  G_{\infty}^{\circ}$ are homotopy equivalences.
It follows that the extension \eqref{archimedean} splits
 if and only if \eqref{compact} splits.


\begin{example}
        Let $\bG=\SL_{n}/\Q$.
        There is a unique non-trivial metaplectic double cover $\widetilde{\SL}_{n}(\A)$.
        We may take $K_{\infty}=\SO(n)$, and then $\tilde K_{\infty}=\Spin(n)$.
        The extension is of type 2, since $\Spin(n)$ is connected.
\end{example}

The following proposition gives an ample supply of type 1 metaplectic covers.

\begin{proposition}
        \label{prop2.2}
        Let $\bG$ be semi-simple and algebraically simply connected, and suppose that for every
        real place $v$ of $k$, the group $G_{v}$ is compact.
        Then every metaplectic extension of $\bG$ is of type 1.
        In particular this holds if $k$ is totally complex.
\end{proposition}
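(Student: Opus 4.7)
My plan is to exploit the equivalence, already pointed out in the excerpt, that \eqref{archimedean} splits as a direct sum if and only if the compact extension \eqref{compact} does. The task thus reduces to showing that every central topological extension of $K_\infty^\circ$ by the finite discrete group $\mu$ splits as a direct product, and the cleanest way to achieve this is to prove that $K_\infty^\circ$ is topologically simply connected under the given hypotheses: a $\mu$-covering of a connected, simply connected topological group is automatically trivial, since its identity component provides a homomorphic section which, together with the central $\mu$, exhibits the total space as a direct product.

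To establish simple-connectedness of $K_\infty^\circ$, I would factor over archimedean places: $G_\infty^\circ = \prod_v G_v^\circ$ and correspondingly $K_\infty^\circ = \prod_v K_v^\circ$, where $K_v$ is a maximal compact subgroup of $G_v$. For a complex place $v$, the group $G_v = \bG(\C)$ is the group of $\C$-points of a connected, semi-simple, algebraically simply connected group, so by Cartan's theorem on compact forms its maximal compact subgroup is compact, connected and topologically simply connected. For a real place $v$, the hypothesis forces $G_v$ to be compact, so $K_v^\circ = G_v^\circ$; moreover $G_v$ is then a compact real form of the complex group $\bG\times_k\C$ and is identified with the (unique) maximal compact subgroup of $\bG(\C)$, which is again simply connected by Cartan's theorem. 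A product of simply connected Lie groups is simply connected, so $\pi_1(K_\infty^\circ) = 1$.

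With $K_\infty^\circ$ simply connected, the splitting of \eqref{compact} is formal: the identity component $(\tilde K_\infty^\circ)^\circ$ is a connected cover of the simply connected base $K_\infty^\circ$, hence maps isomorphically onto it, producing a continuous group-theoretic section. Since $\mu$ is central in $\tilde K_\infty^\circ$ and intersects this section trivially, the induced multiplication map $\mu\times K_\infty^\circ \to \tilde K_\infty^\circ$ is an isomorphism of topological groups, establishing type 1. The totally complex case is then immediate, as there are no real places and the hypothesis on real places is vacuous.

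The only non-formal input is Cartan's theorem that maximal compact subgroups of simply connected complex semi-simple Lie groups are themselves simply connected; once that is granted, the reduction to the compact picture (already carried out in the excerpt) together with standard covering-space theory carries the argument through. The main subtlety to watch for is ensuring that ``compact'' at a real place really does place $G_v$ as the maximal compact of the complexification---this uses the algebraic simple-connectedness of $\bG$ in an essential way, since without it the real points could fail to coincide with the maximal compact of $\bG(\C)$.
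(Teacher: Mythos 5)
Your proposal is correct and takes essentially the same route as the paper. Both arguments reduce, via the equivalence between the splitting of \eqref{archimedean} and of \eqref{compact}, to showing that the relevant archimedean group is topologically simply connected, and both establish this place by place: at complex places by identifying the maximal compact of the complex points, and at real places by observing that compactness forces $G_v$ itself to be (the) maximal compact of $\bG(\C)$. The one small divergence is in the citation for the key input: the paper cites Theorem 1.1 of Adams, \emph{Nonlinear covers of real groups}, for the fact that a maximal compact subgroup of an algebraically simply connected complex semi-simple group is topologically simply connected, whereas you attribute this to ``Cartan's theorem on compact forms.'' Cartan's theorem in the usual sense asserts the existence and conjugacy of compact real forms, not their simple-connectedness, so a more precise pointer (Weyl's finiteness theorem together with the identification of algebraic and topological fundamental groups over $\C$, or simply the Adams reference) would be cleaner; the underlying fact is classical and your use of it is correct. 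You also make explicit the covering-space step (the identity component of $\tilde K_\infty^\circ$ projects isomorphically, yielding a section), which the paper leaves implicit in ``it follows that the extension splits''; this is a welcome elaboration, not a gap.
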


\begin{proof}
        We will show that under these hypotheses, $G_v$ is topologically
        simply connected for each infinite place $v$.

        If $v$ is a complex place of $k$ then $G_{v}$
        is a complex algebraically simply connected group. By the Iwasawa
        decomposition, $G_v$ is homotopy equivalent to a maximal compact
        subgroup. On the other hand, if $k_v$ is real and $G_v$ is compact,
        then $G_v$ is itself a maximal compact subgroup of the
        complexification $\bG(\mathbb{C})$. So it suffices to show that if $\bG$
        is an algebraically simply connected semi-simple Lie group over
        $\C$, then any maximal compact subgroup of $\bG(\mathbb{C})$ is
        topologically simply connected. This follows readily from Theorem 1.1 of
        \cite{adams04}.

        The group $ G_{\infty}$ is therefore a product of
        simply connected spaces, so is simply connected.
        It follows that the extension splits over $ G_{\infty}$.
\end{proof}

\begin{remark}
        It is a widely held misconception that when $k$ is totally complex every metaplectic extension
        is of type 1.
        By the above proposition, this holds when $\bG$ is semi-simple and simply connected,
        but it is false in general.
        Indeed it is even false for double covers of $\GL_{1}$ (the extension constructed in \cite{hill-OJAC}
        is a counterexample).
\end{remark}

\subsection{Arithmetic quotients}

We assume for the remainder of this paper that $\tbG$ is of type 1.

Recall that for a compact open subgroup $K_f$ of $\bG(\A_f)$ we have defined
an arithmetic quotient $Y(K_{f})$.
Assuming that $K_{f}$ has a lift $\hat K_{f}$ to $\tilde\bG(\A)$, we have defined
a $\mu$-covering space $\tilde Y(\hat K_{f})$ of $Y(K_{f})$.
The topological spaces $Y(K_{f})$ and $\tilde Y(\hat K_{f})$ are homotopic to
finite simplicial complexes (see \cite{borel-serre}).
If $K_{f}$ is sufficiently small, then these are topological manifolds.

As described in the introduction, for any finite-dimensional algebraic
representation $W$ of $\cG$, over some field $E$ containing $\Qp$, and any
character $\varepsilon: \mu \to E^\times$, we have a locally constant sheaf of
$E$-vector spaces on $Y(K_f)$,
\[
        \cV_{W\otimes\varepsilon}
        =
        \tilde \bG(k)\backslash \left((\tbG(\A)/\hat K_{\infty}^{\circ} \hat K_{f})\times (W\otimes\varepsilon) \right).
\]
Since $W$ is finite-dimensional and $Y(K_f)$ is homotopic to a finite simplicial
complex, the cohomology groups
$H^\bullet(Y(K_f), \cV_{W\otimes\varepsilon})$ are finite-dimensional $E$-vector spaces.

The formation of $\cV_{W \otimes \varepsilon}$ is compatible with pullback via
the natural maps $Y(K_f') \to Y(K_f)$, for $K_f' \subseteq K_f$. Moreover, for
$g \in \tbG(\A_f)$, right translation defines an isomorphism $[g] : Y(K_f) \to
Y(g^{-1} K_f g)$, and an isomorphism of local systems $[g]^* \cV_{W \otimes
\varepsilon} \cong \cV_{W \otimes \varepsilon}$; when $g \in \mu$, we have
$Y(g^{-1} K_f g) = Y(K_f)$, and the map is simply multiplication by
$\varepsilon(g)$. These compatibilities imply that if $K^{\gp}$ is a tame level,
then the spaces 
\[
        H^{\bullet}_{\cl, \varepsilon}(\hat K^{\gp},W)
        :=
        \limd{U \subset K_f} H^{\bullet}(Y(U K^{\gp}),\cV_{W \otimes \varepsilon}).
\]
are smooth representations of $\tilde G_\gp$, on which $\mu$ acts via the
character $\varepsilon$. Since the $\hat K_{\gp}$-invariants of the
representation $H^{\bullet}_{\cl, \varepsilon}(\hat K^{\gp},W)$ can be
identified with $H^{\bullet}(Y(K_{\gp} K^{\gp}),\cV_{W\otimes\varepsilon})$,
the representations $H^{\bullet}_{\cl, \varepsilon}(\hat K^{\gp},W)$ are
admissible smooth representations of $\tilde G_{\gp}$.

Note that we also have a local system on $\tilde Y(\hat K_{f})$ defined by
\[
        \cV_{W}
        =
        \hat \bG(k)\backslash \left((\tbG(\A)/\hat K_{\infty}^{\circ} \hat K_{f})\times W \right).
\]
If we write $\pr: \tilde Y(\hat K_{f}) \to Y(K_{f})$ for the projection map,
then we have an isomorphism of local systems:
\[
        \pr_{*}\left(\cV_{W}\right)
        =
        \bigoplus_{\eta:\mu\to E^{\times}}
        \cV_{W\otimes\eta},
\]
and hence by Shapiro's lemma (or the spectral sequence of the map $\pr$),
\[
        H^{\bullet}(\tilde Y(\hat K_{f}),\cV_{W})
        =
        \bigoplus_{\eta:\mu\to E^{\times}}
        H^{\bullet}(Y(K_{f}),\cV_{W\otimes\eta}).
\]

\subsection{Connection with the Kubota symbol}
\label{sect:kubota}

We give an alternative description of the cohomology groups in the special case
that $\bG$ is absolutely simple, simply connected, and has positive real rank.
In this situation there is a universal metaplectic cover $\tbG(\A)$ by the group
$\mu_{m}$ of all roots of unity in $k$. The groups
$G_{\infty}$ and $K_{\infty}$ are connected, and their quotient $X= G_{\infty}
/ K_{\infty}$ is a symmetric space. The compact open subgroup $K_{f}$ determines
an arithmetic subgroup
\[
        \Gamma = \bG(k) \cap  \left( K_{f}\times  G_{\infty}\right).
\]
Furthermore the arithmetic quotient $Y(K_{f})$ is connected, and may be
identified with a quotient of $X$ as follows:
\[
        Y(K_{f})
        =
        \Gamma \backslash X.
\]
Recall that we have lifts $\tau_{1}:\bG(k) \to \hat \bG(k)$, and
$\tau_{2}: K_{f} \times G_{\infty} \to  \hat K_{f} \times\hat G_{\infty}$.
These lifts are both defined on the arithmetic subgroup $\Gamma$, but they are
not equal on that subgroup.
The \emph{Kubota symbol} is defined to be the ratio of these two lifts:
\[
        \kappa(\gamma) = \tau_{1}(\gamma)\tau_{2}(\gamma)^{-1},
        \qquad
        \gamma \in \Gamma.
\]
We recall that the Kubota symbol is a surjective homomorphism $\Gamma \to
\mu_{m}$. Its kernel is a non-congruence subgroup of $\Gamma$. Indeed in many
cases $\kappa$ gives an isomorphism between the congruence kernel and $\mu_{m}$
(see for example Theorem 2.9 of \cite{prasad-raghunathan-congruence}).

Suppose again that $W$ is an algebraic representation of $\bG$ over $E$. By
restriction, we obtain an action of $\Gamma$ on $W$, and we may twist this
action by the character $\varepsilon\circ\kappa$ to get a new action. We can
form the local system on $Y(K_{f})$:
\[
        \cV'_{W\otimes\varepsilon}= \Gamma \backslash \Big(X \times (W\otimes(\varepsilon\circ\kappa))\Big).
\]
One can check that $\cV'_{W\otimes\varepsilon}$ is an isomorphic local system to $\cV_{W\otimes\varepsilon}$.
In particular, we may express our cohomology groups in terms of group cohomology:
\[
        H^{\bullet}(Y(K_{f}), \cV_{W\otimes\varepsilon})
        =
        H^{\bullet}_{\group}(\Gamma,W \otimes(\varepsilon\circ\kappa)).
\]

\subsection{Non-vanishing of metaplectic cohomology}

We next show in some simple cases that the spaces $H^{\bullet}_{\cl,\varepsilon}(\hat K^{\gp},W)$
 are non-zero.

\begin{proposition}
        Suppose that $ G_{\infty}$ is compact.
        Then for any $K_f$ sufficiently small and any $W$, the
        vector space $H^{0}_{\cl,\varepsilon}(\hat K_{f},W)$ is non-zero.
\end{proposition}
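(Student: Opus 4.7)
The plan is to reduce the problem to a direct calculation on a finite set, exploiting that compactness of $G_\infty$ forces $Y(K_f)$ to be zero-dimensional.

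First I would observe that when $G_\infty$ is compact we have $G_\infty = G_\infty^\circ = K_\infty^\circ$, so that
\[
        Y(K_f) = \bG(k)\backslash \bG(\A_f)/K_f,
\]
which is a finite set by reduction theory (finiteness of double cosets). Similarly $\tilde Y(\hat K_f)$ is a finite discrete $\mu$-cover of $Y(K_f)$. For a locally constant sheaf on a finite discrete space, $H^0$ is simply the direct sum of the stalks, and the stalk at the point represented by $g \in \bG(\A_f)$ is $(W\otimes\varepsilon)^{\tilde\Gamma_g}$, where $\tilde\Gamma_g$ is the stabilizer in $\tilde\bG(k)$ of $g\hat K_\infty^\circ\hat K_f$ inside $\tbG(\A)/\hat K_\infty^\circ\hat K_f$.

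Next I would invoke the standard neat level argument: for $K_f$ sufficiently small, the finite groups $\Gamma_g := \bG(k)\cap gK_\infty^\circ K_f g^{-1}$ (which are finite because they are discrete subgroups of the compact group $gK_\infty^\circ K_f g^{-1}$) are trivial for all $g$ representing points of $Y(K_f)$. I would then lift this to $\tilde\bG(k)$: any $\tilde\gamma\in\tilde\Gamma_g$ projects to some $\gamma\in\Gamma_g$, so $\tilde\gamma\in\mu$; but $\mu\cap g\hat K_\infty^\circ\hat K_f g^{-1} = \{1\}$ because $\hat K_\infty^\circ\hat K_f$ is by construction a lift of $K_\infty^\circ K_f$ (and $\mu$ is central so conjugation by $g$ preserves this), so $\tilde\gamma = 1$. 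Thus $\tilde\Gamma_g=\{1\}$ for every point.

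It then follows that each stalk is simply $W\otimes\varepsilon$, and
\[
        H^{0}(Y(K_f),\cV_{W\otimes\varepsilon}) \;\cong\; \bigoplus_{[g]\in Y(K_f)} W\otimes\varepsilon,
\]
which is non-zero since $Y(K_f)$ is non-empty and $W \ne 0$. Alternatively, one could argue via the Shapiro-type decomposition $H^\bullet(\tilde Y(\hat K_f),\cV_W) = \bigoplus_\eta H^\bullet(Y(K_f),\cV_{W\otimes\eta})$ already recorded in the paper, combined with the fact that $\tilde Y(\hat K_f)$ is a free $\mu$-torsor over $Y(K_f)$ so the $\mu$-action on $H^0(\tilde Y(\hat K_f),\cV_W)$ hits every character. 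The only real content is the neat level input; everything else is bookkeeping, so I expect no serious obstacle.
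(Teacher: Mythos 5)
Your proof is correct and follows essentially the same route as the paper's: note that $Y(K_f)$ is a finite set when $G_\infty$ is compact, observe that the stalks of $\cV_{W\otimes\varepsilon}$ are invariants under finite stabilizer subgroups, and shrink $K_f$ to make these stabilizers trivial. The only cosmetic difference is that the paper achieves triviality of the stabilizers by making $\hat K_f$ torsion-free directly inside $\tbG(\A)$, whereas you argue at the level of $\bG(k)$ via neatness and then lift, noting that $\mu \cap g\hat K_\infty^\circ \hat K_f g^{-1} = \{1\}$ since $\hat K_\infty^\circ \hat K_f$ is a section of the covering --- a step the paper leaves implicit.
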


\begin{proof}
Since $G_\infty$ is compact, the double quotient
\[ \tbG(k) \backslash \bG(\mathbb{A}) / \hat K_\infty^{\circ} \hat K_f\] 
is a finite set. Moreover, if $\mu_1, \dots, \mu_r$ is a set of coset
representatives, the groups
\[ \Gamma_j = \tbG(k) \cap \mu_j \hat K_\infty^{\circ} \hat K_f \mu_j^{-1}\]
are finite. Then, for any $W$, the space $H^0(Y(K_f), \cV_{W\otimes\varepsilon})$ can be identified
with the space of maps from the finite set $\mu_1, \dots, \mu_r$ to $W\otimes \varepsilon$ for
which $f(\mu_j) \in (W\otimes\varepsilon)^{\Gamma_j}$. By shrinking $K_f$ if necessary, we may
assume that $\hat K_f$ is torsion-free, and hence all the groups $\Gamma_j$ are
trivial. Therefore $H^0(Y(K_f), \cV_{W\otimes\varepsilon})$ is non-zero.
\end{proof}

\begin{proposition}
        \label{non-vanish2}
        Let $k$ be an imaginary quadratic field containing an $m$-th root of unity;
        let $\bG=\SL_{2}/k$ and let $\tilde\bG$ be the canonical metaplectic extension of $\bG$ by $\mu_{m}$.
        Then for $K_{f}$ sufficiently small there is a non-trivial character $\varepsilon:\mu_{m}\to \C^{\times}$ such that
        the space $H^{2}(Y(K_{f}),\C\otimes\varepsilon)$ is non-zero.
\end{proposition}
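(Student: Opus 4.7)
The plan is to identify the cohomology with group cohomology via the Kubota symbol (Section \ref{sect:kubota}), and then use Poincar\'e duality together with the Borel--Serre boundary of the Bianchi manifold $Y(K_{f})$ to detect non-vanishing through boundary cohomology at the cusps. Take $W=\C$ (the trivial representation) and any non-trivial character $\varepsilon:\mu_{m}\to\C^{\times}$. For $K_{f}$ sufficiently small, the arithmetic group $\Gamma := \bG(k)\cap (K_{f}\times G_{\infty})$ is torsion-free, the quotient $Y(K_{f})=\Gamma\backslash\bbH^{3}$ is an oriented hyperbolic $3$-manifold of finite volume with at least one cusp (strong approximation gives connectedness, non-cocompactness of $\SL_{2}(\cO_{k})$ gives the cusp), and Section \ref{sect:kubota} yields
\[
        H^{2}(Y(K_{f}),\C\otimes\varepsilon)
        =
        H^{2}_{\group}(\Gamma,\C(\chi)),
        \qquad \chi:=\varepsilon\circ\kappa.
\]

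The key geometric input is that $\chi$ is non-trivial on $\Gamma$ but \emph{trivial} on the stabiliser $\Gamma_{c}$ of every cusp $c$. Non-triviality of $\chi$ follows from the surjectivity of $\kappa:\Gamma\to\mu_{m}$ (recalled in Section \ref{sect:kubota}) together with the non-triviality of $\varepsilon$. For the triviality on cusp stabilisers, one uses the standard fact that the metaplectic extension of $\SL_{2}$ splits canonically over the unipotent radical of any Borel subgroup, so the two lifts $\tau_{1}$ and $\tau_{2}$ of Section \ref{sect:kubota} coincide on any unipotent element of $\Gamma$. Because $\Gamma$ is torsion-free, each stabiliser $\Gamma_{c}$ is conjugate to a subgroup of upper-triangular unipotent matrices and therefore lies in $\ker\kappa$.

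Let $\overline{Y}$ be the Borel--Serre compactification of $Y(K_{f})$, so $\partial\overline{Y}=\bigsqcup_{c}T_{c}$ is a disjoint union of $2$-tori. The long exact sequence of the pair $(\overline{Y},\partial\overline{Y})$ with coefficients in $\C(\chi)$ contains
\[
        H^{2}(Y(K_{f}),\C(\chi))
        \longrightarrow
        H^{2}(\partial\overline{Y},\C(\chi))
        \longrightarrow
        H^{3}_{c}(Y(K_{f}),\C(\chi))
        \longrightarrow
        H^{3}(Y(K_{f}),\C(\chi)).
\]
The right-hand term vanishes because $Y(K_{f})$ is a non-compact connected oriented $3$-manifold, and Poincar\'e duality identifies
$H^{3}_{c}(Y(K_{f}),\C(\chi))\cong H_{0}(Y(K_{f}),\C(\chi))=\C(\chi)_{\Gamma}$, which is zero since $\chi$ is non-trivial. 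Hence the map $H^{2}(Y(K_{f}),\C(\chi))\to H^{2}(\partial\overline{Y},\C(\chi))$ is surjective. Since $\chi|_{\Gamma_{c}}$ is trivial for every cusp, $H^{2}(\partial\overline{Y},\C(\chi))=\bigoplus_{c}H^{2}(T_{c},\C)\neq 0$, and we conclude that $H^{2}(Y(K_{f}),\C\otimes\varepsilon)\neq 0$. The main obstacle is the verification that the Kubota symbol kills every cusp stabiliser; everything else is standard topology of Bianchi manifolds.
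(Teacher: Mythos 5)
The key geometric input you assert---that $\chi = \varepsilon\circ\kappa$ vanishes on \emph{every} cusp stabiliser $\Gamma_c$---is false, and the paper's proof records this explicitly (``Not every Borel subgroup is essential''). Your justification breaks down because the element of $\SL_2(k)$ conjugating $\Gamma_c$ into the standard upper-triangular unipotent group is in general not in $\Gamma$, hence not in $K_f G_\infty$; since $\tau_2$ is only defined on $K_f G_\infty$, the Kubota symbol $\kappa=\tau_1\tau_2^{-1}$ is not invariant under such a conjugation, and the unique divisibility splitting of a non-standard unipotent subgroup $N'(k)$ agrees with $\tau_1$ but has no reason to agree with $\tau_2$ on $\Gamma\cap N'(k)$. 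Indeed, conjugating $\left(\begin{smallmatrix}1&b\\0&1\end{smallmatrix}\right)$ by $\left(\begin{smallmatrix}p&q\\r&s\end{smallmatrix}\right)\in\SL_2(k)$ gives a unipotent matrix with lower row $(-r^2 b,\ 1+prb)$, and the formula \eqref{kubotasymbol} then yields $\left(\frac{-r^2 b}{1+prb}\right)_{k,m}$, which need not equal $1$.

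That said, your long exact sequence argument only needs \emph{one} cusp on which $\chi$ restricts trivially, not all of them: once $H^3_c(Y,\C(\chi))\cong\C(\chi)_\Gamma=0$ forces the surjection $H^2(Y,\C(\chi))\twoheadrightarrow H^2(\partial\overline Y,\C(\chi))$, a single nonzero summand $H^2(T_c,\C)$ does the job, and the standard cusp is essential directly from \eqref{kubotasymbol}. With that one-line repair your route is correct and genuinely more direct than the paper's, which argues by contradiction with trivial coefficients, compares $Y$ with its $\mu_m$-cover $\tilde Y$ via a diagram chase between the two Borel--Serre boundary exact sequences, and then has to count $\Gamma$- versus $\Gamma_0$-conjugacy classes of Borel subgroups. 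Your Poincar\'e--Lefschetz duality with the nontrivial local system $\C(\chi)$, which kills $H^3_c$ outright, avoids all of that counting---but the price is precisely the careful bookkeeping of which cusps are essential, which your conjugation argument incorrectly tried to sidestep.
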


\begin{proof}
In the case of $\SL_{2}$, the Kubota symbol has been determined on $\Gamma=\Gamma(m^{2})$.
It is given by (see Proposition 1 in \S3 of \cite{kubota-1969})
\begin{equation}
        \label{kubotasymbol}
        \kappa\begin{pmatrix} a&b \\ c& d\end{pmatrix}
        =
        \begin{cases}
                \left(\frac{c}{d}\right)_{k,m} & \hbox{if $c\ne 0$,}\\
                1 & \hbox{otherwise.}
        \end{cases}
\end{equation}
Here the notation $\left(\frac{c}{d}\right)_{k,m}$ means the $m$-th power residue symbol in the field $k$.
Let $\Gamma_{0}=\ker(\kappa)$.
Then we have a decomposition
\[
        H^{2}(\Gamma_{0},\C)
        =
        \bigoplus_{\eta:\mu_{m}\to \C^{\times}}
        H^{2}(\Gamma,\eta\circ\kappa).
\]
We shall suppose that each of the spaces $H^{2}(\Gamma,\eta\circ\kappa)$
is zero apart from the space where $\eta$ is trivial,
and so we are assuming $H^{2}(\Gamma_{0},\C)=H^{2}(\Gamma,\C)$.

We shall write $Y$ for the arithmetic quotient $\Gamma\backslash X$ and $\tilde Y$
for the $\mu_{m}$-cover $\Gamma_{0}\backslash X$.
We shall also write $\partial Y$ and $\partial \tilde Y$ for the boundaries of the
Borel--Serre compactifications of $Y$ and $\tilde Y$ respectively.
We have a commutative diagram, in which the rows are
exact:
\[
        \begin{matrix}
                H^{2}(Y,\C)
                &\to&
                H^{2}(\partial Y,\C)
                &\to&
                H^{3}_{\cpct}(Y,\C)
                &\to&
                0
                \medskip\\
                \downarrow&&
                \downarrow&&
                \downarrow
                \medskip\\
                H^{2}(\tilde Y,\C)
                &\to&
                H^{2}(\partial \tilde Y,\C)
                &\to&
                H^{3}_{\cpct}(\tilde Y,\C)
                &\to&
                0
        \end{matrix}
\]
We are assuming that the first vertical arrow is an isomorphism.
Since $Y$ and $\tilde Y$ are both connected topological 3-manifolds, it follows that
$H^{3}_{\cpct}(Y,\C)$ and $H^{3}_{\cpct}(\tilde Y,\C)$ are both one-dimensional,
 and the third vertical map is also an isomorphism.
A diagram chase shows that the middle vertical map is surjective.
However, the middle vertical arrow is known to be injective, since the composition
\[
        H^{2}(\partial  Y,\C) \stackrel{\pr^{*}}{\to} H^{2}(\partial \tilde Y,\C) \stackrel{\pr_{*}}{\to} H^{2}(\partial  Y,\C)
\]
is known to be scalar multiplication by the degree of the cover $\partial \tilde Y \to \partial Y$.
We have therefore shown that the map $H^{2}(\partial \tilde Y,\C) \to H^{2}(\partial  Y,\C)$
is bijective.
By examining the map $\partial \tilde Y\to \partial Y$, will show that this is not the case.

Recall that the connected components of $\partial Y$ correspond to $\Gamma$-conjugacy
classes of Borel subgroups $\bB=\bT\bN$ defined over $k$.
For each such Borel subgroup we let $\Gamma_{\bB}=\Gamma \cap \bB(k)$.
As $\Gamma$ is torsion-free, we have $\Gamma_{\bB}\subset \bN(k)$, and the corresponding
boundary component is defined by $\partial Y(\bB) = \Gamma_{\bB}\backslash \bN(\C)$.
Since $\Gamma_{\bB}$ is a lattice in $\bN(\C) \cong \C$, we see that each boundary component is
a $2$-torus.
Hence the dimension of $H^{2}(\partial Y,\C)$ is exactly the number of
 $\Gamma$-conjugacy classes of Borel subgroups.
Each $\Gamma$-conjugacy class of Borel subgroups
is a finite union of $\Gamma_{0}$-conjugacy classes.
However, since we are assuming that $H^{2}(\partial Y,\C)=H^{2}(\partial \tilde Y,\C)$,
we conclude that the $\Gamma$-conjugacy class of each Borel subgroup
is equal to its $\Gamma_{0}$-conjugacy class.

Recall that a Borel subgroup $\bB$ is called \emph{essential} if the restriction of $\kappa$ to
$\Gamma_{\bB}$ is trivial, or equivalently if $\Gamma_{\bB}\subset \Gamma_{0}$.
Not every Borel subgroup is essential; however the standard Borel subgroup of upper triangular matrices
 is clearly essential by \eqref{kubotasymbol}.
Suppose $\bB$ is any essential Borel subgroup.
Since we are assuming that the $\Gamma$- and $\Gamma_{0}$-conjugacy classes of $\bB$
are equal,
it follows that the inclusion $\Gamma_{0}\to \Gamma$ gives us a bijection
\[
        \Gamma_{0}/\Gamma_{\bB} \cong \Gamma/\Gamma_{\bB}.
\]
From this we conclude that $\Gamma=\Gamma_{0}$, which gives us the desired contradiction.
\end{proof}

\begin{remark}
The argument in the proof of Proposition \ref{non-vanish2}
only shows that $H^{2}(Y(K_{f}),\C\otimes\varepsilon)$ contains some non-trivial Eisenstein cohomology classes.
In fact, one can show that there are also metaplectic cusp forms of cohomological type
on $\tilde\SL_{2}$.
This follows by examining the Shimura correspondence for the group $\tilde\GL_{2}/k$ (see \cite{flicker-1980}).
In particular, it is shown that if $\tilde\pi$ is an automorphic representation of $\tilde \GL_{2}$, then there is
a corresponding automorphic representation $\pi$ of $\GL_{2}$.
If $\pi$ is cuspidal, then so is $\tilde \pi$.
The image of the map $\tilde\pi \mapsto \pi$ is also calculated.
In particular if $\pi$ has level $1$, then it has a preimage $\tilde\pi$.
Finally, one can check that if $\pi$ is of cohomological type, then a certain twist of $\tilde\pi$ will be
of cohomological type.
\end{remark}

\section{Background on $p$-adic representation theory}

\subsection{Continuous cohomology}

Throughout this section, we let $\gG$ be a locally compact, totally disconnected group.
By a continuous $\gG$-module, we shall mean an abelian topological group $V$, together
with an action of $\gG$ by endomorphisms of $V$, such that the map $\gG \times V \to V$ is continuous.
Suppose $V$ and $W$ are continuous $\gG$-modules.
We shall write $H^{\bullet}_{\cts}(\gG,V)$ and $\Ext^{\bullet}_{\gG}(V,W)$ for the continuous cohomology
groups (see for example \cite{casselman-wigner}).
We shall sometimes consider continuous $\gG$-modules $V$, which are locally convex topological vector spaces over a field $E$. In this case, the field $E$ will always be a complete discretely valued subfield
of $\C_{p}$.

We begin by recalling a rather technical aspect of continuous cohomology from \cite{casselman-wigner}.
Let $V$ be a continuous representation of $\gG$ and let $C^{n}(\gG,V)$ be the abelian group
of continuous maps $\gG^{n+1}\to V$.
We regard $C^{n}(\gG,V)$ as a topological group with the compact-open topology.
We have an exact sequence of $\gG$-modules
\begin{equation}
        \label{resolution}
        0 \to V \to C^{0}(\gG,V) \to C^{1}(\gG,V) \to \cdots .
\end{equation}
Recall that $H^{\bullet}_{\cts}(\gG,V)$
 is the cohomology of the cochain complex $C^{n}(\gG,V)^{\gG}$.

\begin{definition} (see \S1 of \cite{casselman-wigner})
        The group \[Z^{n}(\gG,V)=\ker (C^{n}(\gG,V)\to C^{n+1}(\gG,V))\]
        is given the subspace topology;
        the group \[B^{n}(\gG,V)=\im (C^{n-1}(\gG,V)\to C^{n}(\gG,V))\] 
        is given the quotient topology
        as $C^{n-1}(\gG,V)/Z^{n-1}(\gG,V)$.
        Since \eqref{resolution} is exact, the groups $B^{n}(\gG,V)$ and $Z^{n}(\gG,V)$
        are identical for $n>0$, and the identity map gives a continuous bijective
        homomorphism $B^{n}(\gG,V) \to Z^{n}(\gG,V)$.
        We say that the cohomology $H^{\bullet}_{\cts}(\gG,V)$ is \emph{strongly Hausdorff}
        if the maps $B^{n}(\gG,V) \to Z^{n}(\gG,V)$ are open, i.e. if the two topologies are the same.
\end{definition}


\begin{lemma}
        Suppose $\gG$ is a union of countably many compact subsets
        and $V$ is a Fr\'echet space over $E$ with a continuous action of $\gG$.
        Then the cohomology groups $H^{\bullet}_{\cts}(\gG,V)$ are strongly Hausdorff.
\end{lemma}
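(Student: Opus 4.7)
The plan is to apply the open mapping theorem for Fréchet spaces. By the exactness of \eqref{resolution}, the identity map $B^n(\gG, V) \to Z^n(\gG, V)$ is already a continuous bijection of topological groups for $n > 0$, so it suffices to show that both source and target are Fréchet spaces over $E$; the open mapping theorem then forces the continuous bijection to be a homeomorphism, which is exactly the strong Hausdorff property.

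First I would verify that $C^n(\gG, V)$, equipped with the compact-open topology, is a Fréchet space. Because $\gG$ is locally compact (the standing hypothesis of this section) and $\sigma$-compact by assumption, it is hemicompact: there is an ascending sequence of compact subsets $K_1 \subset K_2 \subset \cdots$ exhausting $\gG$ such that every compact subset of $\gG$ is contained in some $K_m$. The same sequence $K_m^{n+1}$ exhausts $\gG^{n+1}$ in the same sense, and so the compact-open topology on $C(\gG^{n+1}, V)$ coincides with the projective limit topology
\[
        C^n(\gG, V) \;\cong\; \limp{m} C(K_m^{n+1}, V).
\]
Each factor $C(K_m^{n+1}, V)$ is Fréchet, being the space of continuous maps from a compact space into a Fréchet space with the topology of uniform convergence (a countable family of seminorms, namely the suprema over $K_m^{n+1}$ of the defining seminorms of $V$). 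A countable projective limit of Fréchet spaces is Fréchet, so $C^n(\gG, V)$ is Fréchet.

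Next, the boundary maps in $C^\bullet(\gG, V)$ are continuous, since each is an alternating sum of pullbacks along continuous coordinate maps $\gG^{n+2} \to \gG^{n+1}$ and such pullbacks are continuous for the compact-open topology. Therefore $Z^n(\gG, V)$ is a closed subspace of the Fréchet space $C^n(\gG, V)$, hence Fréchet in the subspace topology, and $B^n(\gG, V)$ is the quotient of the Fréchet space $C^{n-1}(\gG, V)$ by the closed subspace $Z^{n-1}(\gG, V)$, hence also Fréchet in the quotient topology. Invoking the open mapping theorem for Fréchet spaces over a complete discretely valued field $E$, the continuous bijective linear map $B^n(\gG, V) \to Z^n(\gG, V)$ is open, so the two topologies agree.

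The only non-formal point in this argument is the first step: establishing that $C^n(\gG, V)$ is Fréchet. This is where both hypotheses on $\gG$ (local compactness, already standing, plus the new $\sigma$-compactness) are used, to reduce the compact-open topology to a countable projective limit. Everything else is the standard Fréchet-space machinery combined with the exactness of the bar resolution.
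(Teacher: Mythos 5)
Your argument is correct and is essentially the one the paper gives: identify $B^n$ and $Z^n$ as Fréchet spaces (as closed subspace and Hausdorff quotient of the Fréchet spaces $C^n(\gG,V)$) and invoke the open mapping theorem to conclude that the continuous bijection $B^n \to Z^n$ is a homeomorphism. You supply slightly more detail than the paper about why $C^n(\gG,V)$ is Fréchet (hemicompactness of $\gG$ and the countable projective-limit description of the compact-open topology), but the structure of the proof is the same.
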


\begin{proof}
        The spaces $C^{n}(\gG,V)$ are Fr\'echet spaces.
        Hence the closed subspaces $Z^{n}(\gG,V)$ are also Fr\'echet spaces,
        and the quotient spaces $B^{n+1}=C^{n}/Z^{n}$ are Fr\'echet spaces.
        The map $B^{n}(\gG,V)\to Z^{n}(\gG,V)$ is a continuous linear bijection of Fr\'echet spaces.
        By the open mapping theorem \cite[Proposition 8.6]{schneider-nfa}, this map is an isomorphism of topological vector spaces.
\end{proof}

\begin{theorem}[Standard facts about continuous
cohomology]\label{thm:standardfacts}
        Let $\gG$ be a locally compact, totally disconnected topological group
        and $\gH$ a closed subgroup of $\gG$.
        Let $E$ be a field as described above.
        \begin{enumerate}
                \item
                The vector space $\cC(\gG,E)$ of continuous functions from $\gG$ to $E$
                is continuously injective as a module over $\gH$. In particular
                $H^{r}_{\cts}(\gH,\cC(\gG,E))$ is zero for $r>0$.
                \item
                Suppose $\gH$ is normal in $\gG$ and $V$ is a continuous $\gG$-module.
                If the groups $H^{\bullet}_{\cts}(\gH,V)$ are strongly Hausdorff then
                there is a natural continuous action of $\gG/\gH$ on $H^{\bullet}_{\cts}(\gH,V)$,
                 and there is a spectral sequence $E_{2}^{r,s}=H^{r}_{\cts}(\gG/\gH,H^{s}_{\cts}(\gH,V))$
                which converges to $H^{r+s}_{\cts}(\gG,V)$.
                \item
                Suppose $\gG$ is a profinite group.
                Then for each $r>0$ there is a short exact sequence
                \[
                        0 \to \sideset{}{^{(1)}}\varprojlim_t
 H^{r-1}_{\cts}(\gG,\Z/p^{t})
                        \to H^{r}_{\cts}(\gG,\Z_{p})
                        \to
                        \limp{t} H^{r}_{\cts}(\gG,\Z/p^{t})
                        \to 0.
                \]
                The notation $\sideset{}{^{(1)}}\varprojlim$ means the first derived functor of the projective
                limit functor (see for example \cite[section 3.5]{weibel}).
        \end{enumerate}
\end{theorem}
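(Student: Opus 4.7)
The three statements are all variants of classical results (essentially found in Casselman--Wigner), so my plan is to cite/sketch the standard arguments, paying attention to the extra topological care needed in the continuous setting.

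For part (1), I would exploit the fact that for a totally disconnected locally compact group $\gG$ with closed subgroup $\gH$, the quotient $\gH\backslash\gG$ is also totally disconnected, so the projection $\gG\to\gH\backslash\gG$ admits a continuous section $\sigma$ (cover $\gH\backslash\gG$ by compact open sets where local sections exist and patch). Using $\sigma$, one gets an $\gH$-equivariant topological isomorphism
\[
        \cC(\gG,E)\;\cong\;\cC(\gH\backslash\gG,\cC(\gH,E)),
\]
where $\gH$ acts only on the inner $\cC(\gH,E)$ by left translation. Thus it suffices to show that the regular representation $\cC(\gH,E)$ is continuously injective for $\gH$. This reduces, via evaluation at $e$, to the fact that $\Hom_{\gH,\cts}(V,\cC(\gH,E))=\Hom_{\cts}(V,E)$, so the injectivity follows from the (trivial) injectivity of $E$ as a continuous module for the trivial group together with exactness of restriction. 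Vanishing of $H^r_{\cts}(\gH,\cC(\gG,E))$ for $r>0$ is then immediate from the acyclicity of continuously injective modules.

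For part (2), this is the Hochschild--Serre spectral sequence for continuous cohomology, obtained as the Grothendieck spectral sequence for the composition of the two left-exact functors $V\mapsto V^{\gH}$ and $W\mapsto W^{\gG/\gH}$, whose composite computes $V^{\gG}$. The plan is to take the standard resolution $V\to C^{\bullet}(\gG,V)$ from \eqref{resolution}; each $C^n(\gG,V)$ contains $\cC(\gG,E)$-type factors, and part (1) shows that such modules are $\gH$-acyclic. The essential technical point — and the one requiring the strongly Hausdorff hypothesis — is that the cohomology $H^s_{\cts}(\gH,V)$ must be endowed with a well-defined Hausdorff topology so that it genuinely becomes a continuous $\gG/\gH$-module; with the strongly Hausdorff assumption the identification $B^n=Z^n$ is a topological isomorphism, so $H^s_{\cts}(\gH,V)$ acquires the subquotient topology as a Hausdorff $\gG/\gH$-module. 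Once this is in place, the spectral sequence is produced from the double complex $C^{r}(\gG/\gH,C^s(\gH,V)^{\gH})$ by the usual argument.

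For part (3), this is the standard Milnor $\varprojlim^{(1)}$ sequence. The plan is to apply continuous cohomology to the short exact sequence of profinite $\gG$-modules
\[
        0\to\Z_p\to\prod_{t\ge 1}\Z/p^t\xrightarrow{\;\mathrm{id}-\mathrm{shift}\;}\prod_{t\ge 1}\Z/p^t\to 0,
\]
where the middle map is $(x_t)\mapsto(x_t-\pi_{t+1,t}(x_{t+1}))$; one readily verifies its kernel is the inverse limit $\Z_p$ and that it is surjective on $p$-adic profinite coefficients. Since $\gG$ is profinite, continuous cochains into a product of discrete modules split as a product, i.e.\ $H^r_{\cts}(\gG,\prod_t\Z/p^t)=\prod_t H^r_{\cts}(\gG,\Z/p^t)$. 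Writing down the associated long exact sequence and noting that the cokernel and kernel of $\mathrm{id}-\mathrm{shift}$ on $\prod H^{r-1}_{\cts}(\gG,\Z/p^t)$ are by definition $\varprojlim^{(1)}$ and $\varprojlim$ respectively, one extracts the claimed short exact sequence. The main (minor) obstacle is verifying that $H^r_{\cts}$ indeed commutes with this particular product on profinite modules; this follows because the continuous cochains $\gG^{\bullet+1}\to\prod_t\Z/p^t$ are manifestly the product of continuous cochains into each factor, both sides being equipped with the product topology.
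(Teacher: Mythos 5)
The paper's proof of this theorem is simply by citation: part (1) is a special case of Proposition 4(a) of Casselman--Wigner, part (2) is their Proposition 5, and part (3) is a special case of Theorem 2.3.4 of Neukirch--Schmidt--Wingberg. Your proposal instead sketches the arguments underlying those cited results, and the sketches are correct and follow the standard routes. Two small caveats are worth flagging. In part (1) you call the injectivity of $E$ as a continuous module ``trivial''; in fact the extension of continuous linear functionals from closed subspaces is the nonarchimedean Hahn--Banach theorem and does require the standing hypothesis (which the paper arranges earlier in the section) that $E$ is spherically complete, so the word ``trivial'' overstates the case even though the step is valid here. In part (3), the surjectivity of $\mathrm{id}-\mathrm{shift}$ on $\prod_t \Z/p^t$ should be attributed to the Mittag--Leffler property (the transition maps $\Z/p^{t+1}\to\Z/p^t$ are surjective, so one constructs a preimage inductively), rather than to a general feature of ``$p$-adic profinite coefficients''; and one should also note that the resulting short exact sequence of compact $\gG$-modules admits a continuous set-theoretic section because the quotient is profinite, which is what guarantees the long exact sequence in continuous cohomology. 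With those clarifications your argument is a faithful reconstruction of the arguments in the references the paper cites.
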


\begin{proof}
        Part (1) is a special case of Proposition 4(a) of \cite{casselman-wigner}.
        Part (2) is proposition 5 of \cite{casselman-wigner}.
        Part (3) is a special case of Theorem 2.3.4 of \cite{n-s-w}.
\end{proof}

\subsection{Some functional analysis}

We shall consider continuous representations of a topological group on
locally convex topological vector spaces over a coefficient field $E$
containing $\Qp$. We again suppose that $E$ is a complete discretely valued
subfield of $\mathbb{C}_p$, so in particular $E$ is spherically complete
\cite[Lemma 1.6]{schneider-nfa}.

For two topological vector spaces $V,W$, we shall write $\cL(V,W)$ for the vector space of
continuous linear maps from $V$ to $W$.
We shall always regard $\cL(V,W)$ as a topological vector space with the strong topology,
and sometimes write $\cL_{b}(V,W)$ to emphasise this.
The notation $V'$ will mean the strong dual of $V$.

Recall that a Fr\'echet space $V$ may be written as the projective limit of a sequence
\[
        V_{1} \leftarrow V_{2} \leftarrow \cdots,
\]
where each $V_{i}$ is a Banach space and each transition map is surjective.
If each transition map is nuclear, then $V$ is called a \emph{nuclear Fr\'echet space}.
A topological vector space $V$ over $E$ is said to be
\emph{of compact type} if it is the locally convex inductive limit of a sequence
\[
        V_{1} \to V_{2}\to V_{3} \to \cdots,
\]
where each $V_{i}$ is a Banach space over $E$, and each transition map is
compact and injective. Compact type spaces are Hausdorff, complete,
bornological, reflexive (and hence barrelled).
The strong dual of a compact type
space is a nuclear Fr\'echet space and vice versa.
More precisely, the functor which takes a
compact type space to its strong dual is an antiequivalence of categories,
between the category of compact type spaces and the category of nuclear Fr\'echet
spaces.

Suppose $V$ and $W$ are Fr\'echet spaces. There is a canonical topology on
$V\otimes W$, in which the continuous bilinear maps $V\times W \to X$ correspond
to the continuous linear maps $V\otimes W \to X$. This topology is not, in
general, Hausdorff. We shall write $V \mathbin{\hat\otimes} W$ for the Hausdorff
completion of $V\otimes W$ with respect to this topology.

\subsection{Continuous admissible representations}

In this section, we shall suppose that we have a connected reductive group $\cG$
defined over $\Qp$, and we write $\scG$ for
the group of $\Qp$-valued points. We shall suppose also that we have a
topological central extension
\[
        1 \to \mu\to \tilde \scG \stackrel{\pr}{\to} \scG \to 1,
\]
where $\mu$ is a finite abelian group.

Let $\scK$ be a compact open subgroup of $\tilde \scG$ and let
$\cC(\scK)$ be the vector space of continuous functions $f:\scK\to E$.
The supremum norm on functions makes $\cC(\scK)$ into a Banach space over
$E$. We shall write $\cD(\scK)$ for its strong dual. The space $\cD(\scK)$
is naturally a Banach algebra over $E$, with multiplication given by convolution of distributions.
This algebra is known to be Noetherian \cite[Theorem 6.2.8]{emerton-memoir}.
If $V$ is a continuous representation of $\tilde \scG$, and $V$ is also a Banach space,
then there is a natural action of $\cD(\scK)$ on the dual space $V'$ (see \cite[Proposition
5.1.7]{emerton-memoir}).

\begin{definition}[{\cite[Proposition-Definition 6.2.3]{emerton-memoir}}]
        \label{cts-admiss}
        Let $V$ be a continuous representation of $\tilde \scG$ on a Banach space.
        We call this representation \emph{admissible continuous} if the dual space $V'$
        is finitely generated as a $\cD(\scK)$-module.
        This condition does not depend on the choice of compact open subgroup $\scK$.
\end{definition}

If the coefficient field $E$ is a finite extension of $\Q _p$, then the
above definition is equivalent to the definition of an admissible continuous
representation given in \cite[\S 3]{ST-banach}.


\subsection{Locally analytic representations}

We shall write $\gg$ for the Lie algebra of $\cG$ over $\Qp$.
By a \emph{Lie sublattice} $\gh$ in $\gg$, we shall mean a finitely generated $\Z_{p}$-submodule, which spans
$\gg$ over $\Qp$, and which is closed under the Lie bracket operation.
Such a sublattice defines a norm on $\gg$, with respect to which $\gh$ is the unit ball.
Hence there is an affinoid $\bbH$, such that $\gh = \bbH(\Qp)$.
If $\gh$ is sufficiently small, then the Baker--Campbell--Hausdorff formula converges
on $\bbH$, and gives $\bbH$ the structure of a rigid analytic group. Furthermore the exponential
map converges on $\bbH$, and gives a bijection $\exp:\bbH(\Qp) \to \scH$ for some compact open subgroup
$\scH$ of $\scG$.
A subgroup $\scH$ which arises in this way is called a \emph{good analytic open subgroup of $\scG$}.
Recall that there is a compact open subgroup $\scK$ of $\scG$, which lifts to
a subgroup $\hat \scK$ of $\tilde \scG$. We shall fix such a subgroup, together with its
lift.

\begin{definition}
        By a \emph{good analytic open subgroup} of $\tilde \scG$, we shall mean
        a compact open subgroup $\hat \scH$ of $\hat \scK$, such that the image
        of $\hat \scH$ in $\scG$ is a good analytic open subgroup of $\scG$.
\end{definition}

Given a good analytic open subgroup $\scH$ of $\tilde \scG$ and a Banach space $V$,
we write $\cC^{\an}(\bbH,V)$ for the vector space of $V$-valued functions in $\scH$,
which are given by a power series which converges on the whole of $\bbH$.
We regard $\cC^{\an}(\bbH,V)$ as a Banach space in which the topology is given by the
supremum norm on $\bbH$.
More generally, if $V$ is a Hausdorff locally compact topological vector space, then we define
\[
        \cC^{\an}(\bbH,V)
        =
        \limd{W\to V} \cC^{\an}(\bbH,W),
\]
where $W\to V$ runs through Banach spaces which map continuously and injectively into $V$.

Suppose that $V$ is also a continuous representation of $\tilde \scG$.
We call a vector $v\in V$ $\scH$-analytic if the orbit map $\scH \to V$ given by $h \mapsto hv$
is represented by an element of $\cC^{\an}(\bbH,V)$.
We shall write $V^{\scH-\an}$ for the subspace of $\scH$-analytic vectors in $V$.
The topology on $V^{\scH-\an}$ is defined to be that given by the supremum norm on $\cC^{\an}(\bbH,V)$.

A vector $v\in V$ is said to be \emph{locally analytic} if it is $\scH$-analytic for
a suitable good analytic open subgroup $\scH$.
The subspace of locally analytic vectors in $V$ will be written $V_{\la}$.
We have an isomorphism of vector spaces:
\[
        V_{\la}
        =
        \limd{\scH}
        V^{\scH-\an}.
\]
We shall regard $V_{\la}$ as a topological vector space with the direct limit topology.
If $\scH_{1}$ is a proper subgroup of $\scH_{2}$ then the map
$V^{\scH_{2}-\an} \to V^{\scH_{1}-\an}$ is compact, and so $V_{\la}$ is a compact type space.

There is a natural continuous map $V_{\la} \to V$.
We call $V$ a \emph{locally analytic} representation if this map is an isomorphism of
topological vector spaces.

\subsection{Fr\'echet--Stein algebras}

Let $A$ be a locally convex topological $E$-algebra.
A Fr\'echet--Stein structure \cite{ST-admissible} on $A$ is an isomorphism of locally convex topological $E$-algebras,
\[
        A = \limp{n} A_{n},
\]
such that
\begin{enumerate}
        \item
        each $A_{n}$ is a left-Noetherian Banach algebra;
        \item
        each map $A_{n+1}\to A_{n}$ is a continuous homomorphism,
        and is right-flat.
\end{enumerate}
An algebra $A$ with such a structure is called a Fr\'echet--Stein algebra.
Suppose $A=\limp{n} A_{n}$ is a Fr\'echet--Stein algebra and $M$ is an $A$-module.
We say that $M$ is \emph{coadmissible} if there is an isomorphism of $A$-modules
\[
        M = \limp{n} M_{n},
\]
such that
\begin{enumerate}
        \item
        each $M_{n}$ is a finitely generated locally convex topological $A_{n}$-module;
        \item
        each map $M_{n+1}\to M_{n}$ of $A_{n+1}$-modules induces an isomorphism
        $M_{n+1}\otimes_{A} A_{n}\to M_{n}$ of $A_{n}$-modules.
\end{enumerate}
If $M=\limp{n}M_{n}$ is coadmissible, then it automatically follows that $M_{n}=M \otimes_{A} A_{n}$. The category of coadmissible modules over a Fr\'echet--Stein algebra
has many of the same good properties as the category of finitely--generated
modules over a Noetherian Banach algebra (which is a special case); in
particular, it is an abelian category.

\subsection{Locally analytic distributions}

Let $\scK$ be a compact open subgroup of $\tilde \scG$.
Fix a good analytic open subgroup $\scH_{1}$ of $\scK$.
A function $\scK\to E$ is said to be $\scH_{1}$-\emph{analytic} if its restriction to every $\scH_{1}$-coset
 can be written as a power series expansion on the corresponding Lie sublattice in $\gg$.
The $\scH_{1}$-analytic functions on $\scK$ form a Banach space with respect to the supremum norm
on the functions.
We shall call this space $\cC^{\scH_{1}-\an}(\scK)$.

If $\scH_{2}\subset \scH_{1}$ is a proper subgroup, and is also a good analytic open subgroup,
then every $\scH_{1}$-analytic function is $\scH_{2}$-analytic, and so we have an inclusion
\[
        \cC^{\scH_{1}-\an}(\scK) \hookrightarrow \cC^{\scH_{2}-\an}(\scK).
\]

A function on $\scK$ is said to be \emph{locally analytic} if there
exists a good analytic open subgroup $\scH_{1}$, such that the function is
$\scH_{1}$-analytic. We shall write $\cC^{\locan}(\scK)$ for the space of such
functions. We clearly have
\[
        \cC^{\locan}(\scK)
        =
        \limd{n}
        \cC^{\scH_{n}-\an}(\scK),
\]
where $\scH_{n}$ is a basis of neighbourhoods of the identity in $\tilde \scG$
consisting of good analytic open subgroups.
By construction, $\cC^{\locan}(\scK)$ has compact type.
We shall write $\cD^{\locan}(\scK)$ for its strong dual, which is therefore
a nuclear Fr\'echet space.
Furthermore, there is a convolution multiplication on
$\cD^{\locan}(\scK)$ induced by the group law on $\scK$.
We can write $\cD^{\locan}(\scK)$ as a projective limit of Banach algebras:
\[
        \cD^{\locan}(\scK)
        =
        \limp{n} \cD^{\scH_{n}-\an}(\scK),
        \qquad
        \cD^{\scH_{n}-\an}(\scK)
        =
        \cC^{\scH_{n}-\an}(\scK)'.
\]
This gives $\cD^{\locan}(\scK)$ the structure of a Fr\'echet--Stein algebra
\cite[Theorem 5.1]{ST-admissible}.

\subsection{Admissible locally analytic representations}

We now recall the definition of an admissible locally analytic
representation, introduced in \cite{ST-admissible}.
If $V$ is a locally analytic representation of $\tilde \scG$, and $\scK$ is a compact open subgroup of $\tilde
\scG$, then the action of $\scK$ on $V'$ extends to a continuous
$\cD^{\la}(\scK)$-module structure (see \cite[Proposition
3.2]{ST-distributions} or \cite[Proposition 5.1.9(ii)]{emerton-memoir}). We say
$V$ is \emph{admissible locally analytic} if $V'$ is coadmissible for one, or
equivalently every, open compact $\scK$.
On the other hand, $V$ is said to be strongly admissible if $V'$ is finitely generated over $\cD^{\la}(\scK)$.
Since every finitely generated module is coadmissible, it follows that every strongly admissible locally analytic
representation is an admissible locally analytic representation.

\begin{theorem}
        Let $V$ be an admissible continuous representation of $\tilde \scG$.
        Then the subspace $V_{\la}$ of locally analytic vectors in $V$ has the structure of
        a locally analytic representation of $\tilde \scG$.
        Furthermore, $V_{\la}$ is strongly admissible.
\end{theorem}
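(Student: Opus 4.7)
The plan is to reduce the statement to the known algebraic-group case by exploiting the fact that the metaplectic cover is locally trivial over any sufficiently small compact open subgroup. Since $\mu$ is finite, the extension $1 \to \mu \to \tilde\scG \to \scG \to 1$ is a topological covering map, and by hypothesis there exists a compact open subgroup $\scK \subset \scG$ together with a splitting $\hat\scK \subset \tilde\scG$. After possibly shrinking $\scK$, we may arrange that $\scK$ is a good analytic open subgroup of $\scG$; then $\hat\scK$ is a good analytic open subgroup of $\tilde\scG$ in the sense of the definition above, and the projection $\hat\scK \to \scK$ is an isomorphism of rigid analytic groups.

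First I would observe that admissibility, admissible continuity, and the formation of $V_{\la}$ are all properties that depend only on the restriction of $V$ to an arbitrary open compact subgroup; this is the content of the compatibilities recorded in Definition~\ref{cts-admiss} and in the discussion following the definition of locally analytic vectors. Concretely: a vector $v \in V$ is locally analytic for $\tilde\scG$ if and only if its orbit map is analytic on some good analytic open subgroup of $\tilde\scG$, which (after further shrinking if necessary) we may take to lie inside $\hat\scK$. Hence
\[
        V_{\la}
        =
        \limd{\hat\scH \subset \hat\scK} V^{\hat\scH\text{-}\an},
\]
with $\hat\scH$ running over good analytic open subgroups contained in $\hat\scK$. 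Under the isomorphism $\hat\scK \cong \scK$ this inductive limit coincides with the locally analytic vectors of $V|_{\hat\scK}$ viewed as a continuous representation of the analytic group $\scK$.

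Next I would invoke the Schneider--Teitelbaum theorem in its original form \cite[Theorem 7.1]{ST-admissible}: if $W$ is a continuous admissible Banach representation of a compact $p$-adic Lie group $\scK$, then $W_{\la}$ carries a locally analytic structure and the strong dual $(W_{\la})'$ is finitely generated over $\cD^{\la}(\scK)$. Applied to $W = V|_{\hat\scK}$ and the identification $\cD^{\la}(\hat\scK) = \cD^{\la}(\scK)$, this yields simultaneously that $V_{\la}$ is a locally analytic $\hat\scK$-representation and that $(V_{\la})'$ is finitely generated over $\cD^{\la}(\hat\scK)$, which is exactly the definition of strong admissibility at the level of $\tilde\scG$.

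The only genuine subtlety, and what I would treat most carefully, is checking that the $\tilde\scG$-action on $V_{\la}$ obtained by transport from the $\tilde\scG$-action on $V$ is again locally analytic (as opposed to merely continuous). This is the usual glueing step: given a locally analytic vector $v$ and any $g \in \tilde\scG$, the orbit map $\hat\scH \to V$, $h \mapsto hv$, is analytic on some $\hat\scH$, and conjugation by $g$ transports this to an analytic orbit map on $g\hat\scH g^{-1}$ for the vector $gv$. Combined with the facts that $\tilde\scG$ is a union of translates of $\hat\scK$ and that $V_{\la}$ carries the inductive limit topology from its $\hat\scH$-analytic subspaces, this shows that the action map $\tilde\scG \times V_{\la} \to V_{\la}$ is separately continuous and locally analytic on each factor, which suffices.
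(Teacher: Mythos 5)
Your argument is correct in spirit, but it takes a detour the paper does not. The paper's proof is a one-liner: it cites \cite[Theorem 7.1]{ST-admissible} for $E$ a finite extension of $\Qp$, and \cite[6.2.4]{emerton-memoir} for general complete discretely valued $E$, emphasising that Emerton's result is stated for arbitrary locally $\Qp$-analytic groups. Since $\tilde\scG$ \emph{is} such a group (a finite central extension of a $p$-adic analytic group is again $p$-adic analytic), that result applies to $\tilde\scG$ directly, with no reduction step at all. Your reduction via the splitting $\hat\scK \cong \scK$ is therefore superfluous: the formation of $V_{\la}$, the notion of admissibility, and the strong-admissibility conclusion are all local on compact open subgroups of $\tilde\scG$, and any compact open subgroup of $\tilde\scG$ is already a compact $p$-adic analytic group, whether or not it is isomorphic to something inside $\scG$. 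The splitting $\hat\scK \cong \scK$ is what lets one define good analytic open subgroups of $\tilde\scG$ in the first place, but it plays no further role in this particular theorem.

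There is also a citation mismatch worth flagging: you invoke \cite[Theorem 7.1]{ST-admissible} in its original form, which (as the paper itself notes) carries the additional hypothesis that $E$ is a finite extension of $\Qp$. The paper's standing assumption is only that $E$ is a complete discretely valued subfield of $\C_p$, which is strictly more general. For such $E$, the correct reference is Emerton's \cite[6.2.4]{emerton-memoir}, which both removes the restriction on $E$ and applies to $\tilde\scG$ directly — thereby obviating your reduction step in one stroke. Apart from these two points your glueing argument for the local analyticity of the $\tilde\scG$-action on $V_{\la}$ is the standard one and is fine.
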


\begin{proof}
This is shown in \cite[Theorem
7.1]{ST-admissible} under the mild additional hypothesis that $E$ is a finite
extension of $\Q _p$, and for general complete discretely valued $E$ in
\cite[6.2.4]{emerton-memoir} (which is stated for locally analytic groups).
\end{proof}

If $V$ and $W$ are locally analytic representations of $\tilde \scG$, then they have an action of
the Lie algebra $\gg$ of $\cG$ over $\Qp$, and we shall write $H^{\bullet}_{\Lie}(\gg,V)$ and
$\Ext^{\bullet}_{\gg}(V,W)$ for the Lie algebra cohomology.
There is a smooth action of $\tilde \scG$ on $\Ext^{\bullet}_{\gg}(V,W)$.

\begin{theorem}[Emerton]\label{thm:liecohomology}
        Let $V$ be an admissible continuous representation of $\scK$ and let $W$ be a finite dimensional
        algebraic representation of $\scK$.
        Then there are canonical isomorphisms
        \[
                \Ext^{\bullet}_{\scK}(W,V)
                \cong
                \Ext^{\bullet}_{\gg}(W,V_{\la})^{\scK},
                \quad\quad
                H^{\bullet}_{\cts}(\scK,V)
                \cong
                H^{\bullet}_{\Lie}(\gg,V_{\la})^{\scK}.
        \]
        In particular we have
        \[
                \Ext^{\bullet}_{\gg}(W,V_{\la})
                \cong
                \limd{\scU}\Ext^{\bullet}_{\scU}(W,V),
                \quad\quad
                H^{\bullet}_{\Lie}(\gg,V_{\la})
                \cong
                \limd{\scU}H^{\bullet}_{\cts}(\scU,V),
        \]
        where the limits are taken over subgroups $\scU$ of finite index in $\scK$.
\end{theorem}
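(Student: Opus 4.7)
The plan is to reduce both displayed isomorphisms to the single cohomology comparison $H^{\bullet}_{\cts}(\scK,V)\cong H^{\bullet}_{\Lie}(\gg,V_{\la})^{\scK}$, then reduce that to the case in which $\scK$ is a good analytic open subgroup, and finally invoke a Koszul/Poincar\'e-lemma argument. The $\Ext$ version reduces to the cohomology version because $W$ is finite-dimensional algebraic: the tensor product $W^{\vee}\otimes_{E} V$ is again admissible continuous with locally analytic vectors $W^{\vee}\otimes_{E} V_{\la}$, and standard dimension-shifting yields $\Ext^{\bullet}_{\scK}(W,V)\cong H^{\bullet}_{\cts}(\scK,W^{\vee}\otimes_{E} V)$ and $\Ext^{\bullet}_{\gg}(W,V_{\la})\cong H^{\bullet}_{\Lie}(\gg,W^{\vee}\otimes_{E} V_{\la})$ (the latter via the Chevalley--Eilenberg complex).

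To further reduce to the analytic case, I would pick a cofinal decreasing sequence $\{\scH_{n}\}$ of good analytic open subgroups of $\scK$, each chosen normal in $\scK$. Since $\scK/\scH_{n}$ is finite and $E$ has characteristic zero, the inflation--restriction spectral sequence of Theorem~\ref{thm:standardfacts}(2) collapses on both sides and gives $H^{\bullet}_{\cts}(\scK,V)\cong H^{\bullet}_{\cts}(\scH_{n},V)^{\scK/\scH_{n}}$ and $H^{\bullet}_{\Lie}(\gg,V_{\la})^{\scK}\cong\bigl(H^{\bullet}_{\Lie}(\gg,V_{\la})^{\scH_{n}}\bigr)^{\scK/\scH_{n}}$; so it suffices to treat $\scK=\scH$ a good analytic open subgroup.

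For such $\scH$, I would introduce the intermediate complex of $\scH$-analytic cochains $C^{\bullet}_{\scH-\an}(\scH,V)$. Differentiation at the identity maps this to the Chevalley--Eilenberg complex of $V^{\scH-\an}$, and a Poincar\'e-lemma argument on the affinoid rigid-analytic group $\bbH$ shows this differentiation map is a quasi-isomorphism, identifying its cohomology with $H^{\bullet}_{\Lie}(\gg,V^{\scH-\an})^{\scH}$. The harder step is that the inclusion of $\scH$-analytic cochains into continuous cochains becomes a quasi-isomorphism after passing to the direct limit over shrinking $\scH$; this is the comparison theorem for admissible continuous representations. Assembling this with the identification $V_{\la}=\limd{\scH}V^{\scH-\an}$ and taking the direct limit yields the desired isomorphism.

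Finally, the ``in particular'' statements follow because $\gg$ acts trivially on $H^{\bullet}_{\Lie}(\gg,V_{\la})$, so the residual action of $\scK$ on this space is locally constant and every class is fixed by some open subgroup $\scU\subset\scK$; hence $H^{\bullet}_{\Lie}(\gg,V_{\la})=\limd{\scU}H^{\bullet}_{\Lie}(\gg,V_{\la})^{\scU}$, which combined with the main statement applied to each $\scU$ gives the last displayed isomorphism. The main obstacle is precisely the comparison between the $\scH$-analytic and continuous cochain complexes, where the admissibility hypothesis on $V$ must be leveraged through the Fr\'echet--Stein structure on $\cD^{\la}(\scH)$ and the coadmissibility of $V'$ as a $\cD^{\la}(\scH)$-module; without this input one cannot uniformly ``rigidify'' continuous cochains to analytic ones on sufficiently small open subgroups.
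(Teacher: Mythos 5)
Your reduction of the $\Ext$ statements to the pure cohomology statements, by replacing $V$ with $W^{\vee}\otimes_{E}V$ (which is again admissible continuous, with locally analytic vectors $W^{\vee}\otimes_{E}V_{\la}$), coincides exactly with the paper's first step. After that, however, the paper does not prove the cohomology comparison at all: it simply cites \cite[Prop.~1.1.12(ii) and Theorem~1.1.13]{emerton-interpolation}, which are precisely the statements you set out to re-derive. Your outline --- descend to a good analytic open subgroup $\scH$ via inflation--restriction (valid since $\scK/\scH_{n}$ is finite and $E$ has characteristic zero, so the Hochschild--Serre spectral sequence degenerates, provided one first checks strong Hausdorffness as the paper's lemma does for Fr\'echet coefficients), pass through $\scH$-analytic cochains, differentiate into the Chevalley--Eilenberg complex, invoke a rigid-analytic Poincar\'e lemma on $\bbH$, and compare analytic with continuous cochains in the direct limit --- is a faithful sketch of Emerton's argument, itself a generalisation of Lazard's isomorphism. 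You correctly flag the genuine crux: that the analytic-to-continuous cochain comparison becomes a quasi-isomorphism in the limit is exactly Emerton's Theorem~1.1.13, and it does hinge on the coadmissibility of $V'$ over $\cD^{\la}(\scH)$ as you say; your sketch asserts rather than proves this, which is fair in a blind attempt. Your derivation of the ``in particular'' display from triviality of the $\gg$-action on $H^{\bullet}_{\Lie}(\gg,V_{\la})$, hence smoothness of the residual $\scK$-action, is also correct, though it implicitly uses that the induced $\scK$-action on the cohomology is continuous (which holds because the Chevalley--Eilenberg complex consists of compact-type spaces with locally analytic $\scK$-actions). In short, you have reconstructed the proof standing behind the paper's citation; the two do not conflict, and what your route buys is transparency about where admissibility enters, at the cost of leaving the key comparison theorem itself as a black box.
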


\begin{proof}
 The formulae of the left column follow from those of the right column applied
to $V \otimes W'$, so it suffices to prove the latter. These follow from
\cite[Prop. 1.1.12(ii) and Theorem 1.1.13]{emerton-interpolation}.
\end{proof}


\subsection{Essentially admissible locally analytic representations}

The definition of essentially admissible locally analytic representations,
introduced in \S 6.4 of \cite{emerton-memoir} is slightly more involved. The
group $\tilde \scG$ is locally $\Qp$-analytic, and its centre $Z = Z_{\tilde
\scG}$ is topologically finitely generated. Following the construction \emph{loc.cit.}, we
may construct a rigid
space $\widehat Z$ over $E$, together with a ``universal character'' $Z
\hookrightarrow \cC^{\an}(\widehat Z)$. This space parametrises the locally $\Qp$-analytic 
characters of $Z$, in the sense that if $E' / E$ is any finite extension, there is a canonical 
bijection between the $E'$-points of $\widehat Z$ and the $E'$-valued characters of $Z$, where 
a point $x \in \widehat Z(E')$ corresponds to the composition of the universal character with 
the evaluation map at $x$.

If $V$ is a locally analytic representation of $\tilde \scG$, then 
the dual space $V'$ has an action of $\cD^{\la}(\scK)$.
Suppose also that the $Z_{\tilde \scG}$-action on $V$ extends to a separately continuous
action
\[
        \cC^{\an}(\widehat Z) \times V \to V.
\]
Then by \cite[Proposition 6.4.7]{emerton-memoir}, $V'$ is also a topological
$\cC^{\an}(\widehat Z)$-module.
The actions of $\cC^{\an}(\widehat Z)$ and $\cD^{\la}(\scK)$
on $V'$ commute,
and so $V'$ has an action of $\cC^{\an}(\widehat Z) \mathbin{\hat\otimes} \cD^{\la}(\scK)$,
which is a Fr\'echet--Stein algebra.

\begin{definition}
        \label{essen-admiss}
        Let $V$ be a locally analytic representation of $\tilde \scG$. We say that $V$
        is \emph{essentially admissible} if
        it satisfies the following two conditions:
        \begin{enumerate}
                \item
                the $Z$-action on $V$ extends to a separately continuous
                action of $\cC^{\an}(\widehat Z)$,
                \item
                for one, and hence for every compact open subgroup $\scK$ of $\tilde \scG$,
                the dual space $V'$ is co-admissible as a module over the Fr\'echet--Stein
                algebra $\cC^{\an}(\widehat Z) \mathbin{\hat\otimes} \cD^{\la}(\scK)$.
        \end{enumerate}
\end{definition}

\begin{theorem}
        Every admissible locally analytic representation
         of $\tilde \scG$ is an essentially admissible locally analytic representation.
\end{theorem}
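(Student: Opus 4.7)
The plan is to verify the two defining conditions of essential admissibility (Definition \ref{essen-admiss}) for an admissible locally analytic representation $V$ of $\tilde \scG$. Fix a compact open subgroup $\scK \subset \tilde \scG$; by hypothesis, $V'$ is coadmissible over the Fr\'echet--Stein algebra $\cD^{\la}(\scK) = \limp{n} \cD^{\scH_{n}-\an}(\scK)$, so that $V' = \limp{n} V'_{n}$ with each $V'_{n}$ finitely generated over $\cD^{\scH_{n}-\an}(\scK)$ and $V'_{n+1} \otimes_{\cD^{\scH_{n+1}-\an}(\scK)} \cD^{\scH_{n}-\an}(\scK) \cong V'_{n}$.

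For condition (1), let $Z^{\circ} := Z \cap \scK$, a compact open subgroup of $Z$. Since the representation is locally analytic, the restriction of the action to $Z^{\circ}$ is locally analytic, and hence gives $V'$ the structure of a separately continuous $\cD^{\la}(Z^{\circ})$-module. By the Amice--Schneider--Teitelbaum Fourier transform, $\cD^{\la}(Z^{\circ})$ is naturally identified with the ring of global sections $\cO(\widehat{Z^{\circ}})$ on the rigid space of locally analytic characters of $Z^{\circ}$, which is an admissible open in $\widehat Z$. The action of the discrete quotient $Z/Z^{\circ}$ (a finitely generated abelian group) translates this open around $\widehat Z$, and one glues the resulting compatible module structures over an admissible affinoid covering to obtain a separately continuous action of $\cC^{\an}(\widehat Z)$ extending the $Z$-action.

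For condition (2), we work with the Fr\'echet--Stein structure on $\cC^{\an}(\widehat Z) \mathbin{\hat\otimes} \cD^{\la}(\scK) = \limp{n} A_{n}$, where $A_{n} := \cO(U_{n}) \mathbin{\hat\otimes} \cD^{\scH_{n}-\an}(\scK)$ and $\{U_{n}\}$ is an increasing admissible affinoid exhaustion of $\widehat Z$. Since $V'_{n}$ is already finitely generated over $\cD^{\scH_{n}-\an}(\scK)$, it is a fortiori finitely generated over $A_{n}$; and the required transition isomorphisms $V'_{n+1} \otimes_{A_{n+1}} A_{n} \cong V'_{n}$ reduce formally to the already-known transition isomorphisms over $\cD^{\la}(\scK)$.

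The main technical obstacle, which I expect to require the most care, is to verify that the $\cO(U_{n})$-action on each layer $V'_{n}$ (coming from the extended $\cC^{\an}(\widehat Z)$-action constructed in the first step) genuinely factors through $A_{n}$ together with the given $\cD^{\scH_{n}-\an}(\scK)$-action, i.e.~that the two actions of $Z^{\circ}$ obtained via the two embeddings into $A_{n}$ agree on $V'_{n}$. This is essentially a matter of unwinding the Fourier-transform identifications used to extend the $Z$-action and checking that the restriction to $Z^{\circ}$ of the $\cC^{\an}(\widehat Z)$-action recovers the original $Z^{\circ}$-module structure; once this compatibility is in place, coadmissibility over $\cC^{\an}(\widehat Z) \mathbin{\hat\otimes} \cD^{\la}(\scK)$ follows formally.
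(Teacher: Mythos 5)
The paper's proof is simply a citation to \cite[Proposition 6.4.10]{emerton-memoir}, so your constructive approach is a genuine alternative worth examining; the overall skeleton (reduce to comparing the Fr\'echet--Stein structures on $\cD^{\la}(\scK)$ and $\cC^{\an}(\widehat Z)\mathbin{\hat\otimes}\cD^{\la}(\scK)$ via the intermediate group $Z^\circ = Z\cap\scK$) is correct and is indeed the key idea. However, the step establishing condition (1) has a real gap, and not the one you flag at the end.

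The problem is the construction of the $\cC^{\an}(\widehat Z)$-action. It is not true that $\widehat{Z^\circ}$ is an admissible open of $\widehat Z$. Restriction of characters gives a surjection $\widehat Z \to \widehat{Z^\circ}$, not an inclusion; if $Z \cong Z^\circ \times \Z^r$ then $\widehat Z \cong \widehat{Z^\circ} \times (\G_m^{\mathrm{rig}})^r$, and $\widehat{Z^\circ}$ sits inside $\widehat Z$ only as a closed fibre, not as an open subspace. Likewise, $Z/Z^\circ$ has no translation action on $\widehat Z$ (translation is by elements of the dual, i.e.\ points of $\widehat Z$, not by elements of $Z$), so the proposed gluing does not get off the ground. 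More importantly, fixing this does not make the extension automatic: the substance of the statement is that the action of the non-compact part of $Z$ is ``bounded'' enough for elements of $\cO((\G_m^{\mathrm{rig}})^r)$ to converge, and this is precisely where admissibility enters. Concretely, since $Z$ is central it commutes with $\cD^{\la}(\scK)$ and hence acts on each layer $V'_n$; $V'_n$ is finitely generated over the Noetherian Banach algebra $B_n = \cD^{\scH_n-\mathrm{an}}(\scK)$, so $\End_{B_n}(V'_n)$ is itself a Noetherian Banach algebra, the generators of $Z/Z^\circ$ act by units therein, and any Laurent series in $\cO((\G_m^{\mathrm{rig}})^r)$ converges when evaluated at a tuple of units and their inverses in a Banach algebra. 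This convergence argument is the missing ingredient; once it is in place, the $\cD^{\la}(Z^\circ)$-part combines with it to give the action of $\cC^{\an}(\widehat Z) \cong \cD^{\la}(Z^\circ) \mathbin{\hat\otimes} \cO((\G_m^{\mathrm{rig}})^r)$ on each $V'_n$, and hence on $V' = \varprojlim V'_n$. The compatibility of the two $Z^\circ$-actions that you identify as the main obstacle is, by comparison, a routine check once the construction is made this way; the genuinely delicate point is that the $\cC^{\an}(\widehat Z)$-action preserves and converges on each layer $V'_n$, which fails for general locally analytic representations and is exactly where the hypothesis of admissibility (rather than mere essential admissibility of some other kind) is used.
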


\begin{proof}
        This is \cite[Proposition 6.4.10]{emerton-memoir}, and is stated there for arbitrary locally analytic groups.
\end{proof}

Suppose that the centre $Z$ has finite index in $\tilde\scG$, and let $V$ be an essentially
admissible locally analytic representation of $\tilde \scG$.
Then $V$ is by restriction an essentially admissible representation of $Z$.
For any affinoid $\mathbb{U} \subset \widehat Z$, the module $V' \otimes_{\cC^{\an}(\widehat Z)} \cC^{\an}(\mathbb{U})$ is
finitely generated over $\cC^{\an}(\mathbb{U})$, and so we may regard $V'$ as a coherent sheaf on $\widehat Z$.
The functor which takes $V$ to $V'$ is an anti-equivalence of categories between the category of
essentially admissible locally analytic representations of $Z$ and the category
of coherent sheaves on $\widehat Z$ (see the discussion following Proposition 6.4.10
in \cite{emerton-memoir}).

\section{Completed cohomology of metaplectic groups}

In this section, we adapt some definitions and results of Emerton
\cite{emerton-interpolation} to the metaplectic case. Suppose again that we have
a connected reductive group $\bG$ defined over a number field $k$, and that we
have a type 1 metaplectic cover of $\bG$ by $\mu$.

Given a representation $W$ of $\tilde K_{\gp}$, we may define a corresponding
local system on
 $Y(K^{\gp}K_{\gp})$ as follows:
\[
        \cV_{\gp}(W)
        =
        \left( (\hat\bG(k) \backslash \tilde \bG(\A) / \hat K^{\gp} \hat K_{\infty}^{\circ})\times W \right) / \tilde K_{\gp}.
\]
Suppose as before that our coefficient field $E$ is an extension of $\Qp$.
Again, let $W$ be an algebraic representation of $\cG$ over $E$ and let $\varepsilon:\mu\to E^{\times}$
be an injective character.
We then have an action of $G_{\gp}$ on $W$. This gives rise to an action of
$\tilde K_{\gp} =\hat K_{\gp} \oplus \mu$, in which $\hat K_{\gp}$ acts through its
isomorphism with $K_{\gp}$ and $\mu$ acts by scalar multiplication by $\varepsilon$.
We shall call this representation $W\otimes\varepsilon$.
We therefore have a local system $\cV_{\gp}(W\otimes\varepsilon)$ on $Y(K^{\gp}K_{\gp})$.

As in Lemma 2.2.4 of Emerton \cite{emerton-interpolation}, we note that
$\cV_{\gp}(W\otimes\varepsilon)$ is canonically isomorphic to the local system $\cV_{W\otimes\varepsilon}$
defined in the introduction.
In particular, these two local systems have the same cohomology groups.
It is important to see these cohomology groups from both points of view: when regarding them as the cohomology of $\cV_{W\otimes\varepsilon}$,
it is clear that these groups are defined over any field of definition of $W$.
In particular, it follows that the eigenvalues of the Hecke operators on these spaces are algebraic.
When regarding them as cohomology groups of $\cV_{\gp}(W\otimes\varepsilon)$,
we shall see that we are able to $p$-adically interpolate the systems of eigenvalues.

\subsection{The representations $\tH^{s}$}
\label{sect:htilde}

Let $\cC(\tilde K_{\gp})$ be the vector space
of continuous functions $f: \tilde K_{\gp}\to E$.
The vector space $\cC(\tilde K_{\gp})$ is a continuous representation of $\tilde K_{\gp}\times \tilde K_{\gp}$,
where the first $\tilde K_{\gp}$ acts on function by left-translation and the second by right-translation.
Using one of these $\tilde K_{\gp}$ actions, we can define a local system $\cV(\tilde K_{\gp})$ of $\tilde K_{\gp}$-modules on $ Y(\hat K^{\gp} \hat K_{\gp})$ by
\[
        \cV(\tilde K_{\gp})
        =
        \left(  (\hat\bG(k)  \backslash  \tbG(\A) / \hat K_{\infty}^{\circ} \hat K^{\gp}) \times\cC(\tilde K_{\gp})\right)/
        \tilde K_{\gp},
\]
and we shall be interested in the cohomology groups of this local system:
\[
        \tH^{\bullet}(\hat K^{\gp},E)
        =
        H^{\bullet}(Y(\hat K^{\gp}\hat K_{\gp}),\cV(\tilde K_{\gp})).
\]
The vector spaces $\tH^{\bullet}(\hat K^{\gp},E)$ have an action of $\tilde K_{\gp}$, together with
a commuting actions of $\cH^{\gp}$.
To avoid sign errors, we state now that we have used the right-translation action of $\tilde K_{\gp}$
to form the local system $\cV(\tilde K_{\gp})$;
the action of $\tilde K_{\gp}$ on $\tH^{\bullet}(\hat K^{\gp},E)$ is given by the left-translation action.

Note that the vector space $\cC(\tilde K_{\gp})$ decomposes as a direct sum of $\mu$-isotypic subspaces:
\[
        \cC(\tilde K_{\gp})
        =
        \bigoplus_{\eta : \mu\to E^{\times}}
        \cC(\tilde K_{\gp})^{\eta},
\]
where $\cC(\tilde K_{\gp})^{\eta}$ consists of continuous functions $f:\tilde K_{\gp}\to E$,
such that $f(\zeta x)=\eta(\zeta)\cdot f(x)$ for all $\zeta\in\mu$.
As a consequence, we have a similar decomposition
\[
        \tH^{\bullet}(\hat K^{\gp},E)
        =
        \bigoplus_{\eta : \mu \to E^{\times}}
        \tH^{\bullet}_{\eta}(\hat K^{\gp},E).
\]
Note that as the left and right translation actions of $\mu$ differ by a sign, we have the following:

\begin{lemma}
        \label{epsilon-eigenspace}
        The subspace $\tH^{\bullet}_{\eta}(\hat K^{\gp},E)$ of $\tH^{\bullet}(\hat K^{\gp},E)$
        is the $\eta'$-eigenspace for the action of $\mu$.
\end{lemma}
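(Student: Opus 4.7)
The plan is to unwind the definitions and track how the two translation actions of $\mu$ on $\cC(\tilde K_{\gp})$ interact with the decomposition into $\cC(\tilde K_{\gp})^\eta$. The key point is the ``sign flip'': the subspace $\cC(\tilde K_{\gp})^\eta$ is defined by the transformation law $f(\zeta x) = \eta(\zeta) f(x)$, and since $\mu$ is central, this is the $\eta$-eigenspace for right-translation but the $\eta^{-1} = \eta'$-eigenspace for left-translation.

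First I would check that the decomposition $\cC(\tilde K_{\gp}) = \bigoplus_\eta \cC(\tilde K_{\gp})^\eta$ is stable under both the left- and right-translation actions of $\tilde K_{\gp}$. For the right-translation action this is immediate: if $f \in \cC(\tilde K_{\gp})^\eta$ and $y \in \tilde K_{\gp}$, then $(R_y f)(\zeta x) = f(\zeta x y) = \eta(\zeta) f(xy) = \eta(\zeta) (R_y f)(x)$. For the left-translation action the same calculation works, using that $\mu$ is central: $(L_y f)(\zeta x) = f(y^{-1}\zeta x) = f(\zeta y^{-1} x) = \eta(\zeta) (L_y f)(x)$.

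Since the local system $\cV(\tilde K_{\gp})$ is formed using the right-translation representation of $\tilde K_{\gp}$ on $\cC(\tilde K_{\gp})$, the $\mu$-stable decomposition above yields a decomposition of local systems $\cV(\tilde K_{\gp}) = \bigoplus_\eta \cV(\tilde K_{\gp})^\eta$ where $\cV(\tilde K_{\gp})^\eta$ is built from $\cC(\tilde K_{\gp})^\eta$. Passing to cohomology gives the stated decomposition $\tH^\bullet(\hat K^{\gp},E) = \bigoplus_\eta \tH^\bullet_\eta(\hat K^{\gp},E)$.

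It remains to compute how the left-translation action of a central element $\zeta \in \mu$ acts on $\cC(\tilde K_{\gp})^\eta$: one has $(L_\zeta f)(x) = f(\zeta^{-1} x) = \eta(\zeta^{-1}) f(x) = \eta'(\zeta) f(x)$. Thus on the $\eta$-summand, the left-translation representation of $\mu$ is scalar multiplication by $\eta'$. Since the $\tilde K_{\gp}$-action on $\tH^\bullet$ comes from the left-translation action, functoriality of cohomology yields that $\mu$ acts on $\tH^\bullet_\eta(\hat K^{\gp},E)$ by the character $\eta'$, proving the lemma. There is no real obstacle here; the only thing to be careful about is not conflating the two $\mu$-actions, which is exactly the source of the sign noted immediately before the lemma.
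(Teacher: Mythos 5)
Your proof is correct and takes essentially the same approach the paper takes (the paper actually gives no proof at all for this lemma, merely noting beforehand that the left and right translation actions of $\mu$ differ by a sign); you have simply written out that observation in full.
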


\begin{lemma}
        As representations of $\tilde K_{\gp}$, the spaces $\tH^{\bullet}(\hat K_{\gp},E)$ are admissible continuous representations.
\end{lemma}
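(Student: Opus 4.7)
\emph{Sketch.} The assertion is the metaplectic analogue of \cite[Theorem~2.1.5]{emerton-interpolation}, and since our type~1 hypothesis makes $\tilde K_{\gp} = \hat K_{\gp} \oplus \mu$ a compact $p$-adic analytic group, Emerton's argument essentially goes through verbatim. I would proceed in two steps.

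First, I would identify $\tH^{s}(\hat K^{\gp}, E)$ with the completed cohomology defined in the introduction. Fix a cofinal system of compact open normal subgroups $\hat U$ of $\hat K_{\gp}$. As continuous $\tilde K_{\gp}$-representations,
\[
    \cC(\tilde K_{\gp}, \cO_E/p^n) \;=\; \limd{\hat U}\, \cC(\tilde K_{\gp}/\hat U, \cO_E/p^n),
\]
with each term on the right isomorphic to $\Ind_{\hat U}^{\tilde K_{\gp}} \cO_E/p^n$. Since $Y(\hat K^{\gp}\hat K_{\gp})$ has the homotopy type of a finite simplicial complex \cite{borel-serre}, sheaf cohomology commutes with this filtered colimit, and Shapiro's lemma (interpreting $\cV_{\gp}(\Ind_{\hat U}^{\tilde K_{\gp}} \cO_E/p^n)$ as the pushforward of the constant sheaf along the finite cover $\tilde Y(\hat K^{\gp}\hat U) \to Y(\hat K^{\gp}\hat K_{\gp})$) yields
\[
    H^s\bigl(Y(\hat K^{\gp}\hat K_{\gp}),\, \cV_{\gp}(\cC(\tilde K_{\gp}/\hat U, \cO_E/p^n))\bigr) \cong H^s(\tilde Y(\hat K^{\gp}\hat U), \cO_E/p^n).
\]
Passing to $\limp{n}$ (so the derived inverse limit vanishes after tensoring with $E$, as the terms are finitely generated over the artinian ring $\cO_E/p^n$) and inverting $p$ identifies $\tH^s(\hat K^{\gp}, E)$ with the completed cohomology of the introduction.

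Second, admissibility follows from the standard finite-generation argument. The Pontryagin dual of $H^s(\tilde Y(\hat K^{\gp}\hat U), \cO_E/p^n)$ is a finitely generated $\cO_E/p^n$-module; varying $\hat U$ and $n$, one sees that the inverse limit is finitely generated over the Noetherian Iwasawa algebra $\cO_E[[\hat K_{\gp}]]$ (the degree bound being uniform because of the finite simplicial model). By \cite[Proposition~5.1.7]{emerton-memoir} this transforms, after inverting $p$, into finite generation of the strong dual of $\tH^s$ as a $\cD(\hat K_{\gp})$-module, which by Definition~\ref{cts-admiss} is exactly admissible continuity as a $\hat K_{\gp}$-representation. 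Since $\cD(\tilde K_{\gp})$ is a free $\cD(\hat K_{\gp})$-module of rank $|\mu|$, the same finite generation holds over $\cD(\tilde K_{\gp})$, giving the full statement.

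The principal technical obstacle is the commutation of cohomology with the various direct and inverse limits. This is handled in the usual way using the Borel--Serre finiteness of $Y(\hat K^{\gp}\hat K_{\gp})$ and Mittag-Leffler vanishing of $\sideset{}{^{(1)}}\varprojlim$ for inverse systems of finitely generated modules over artinian rings. The metaplectic aspect contributes only bookkeeping: because $\mu$ is finite and $\tilde K_{\gp}$ splits as $\hat K_{\gp} \oplus \mu$, every compatibility required at the level of local systems, covers, and distribution algebras carries over verbatim from the algebraic case.
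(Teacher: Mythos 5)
Your proof is correct in outline, but it takes a genuinely different and substantially longer route than the paper's. The paper's proof is two sentences: since $Y(K^{\gp}K_{\gp})$ is homotopy equivalent to a finite simplicial complex, the cohomology $\tH^{\bullet}(\hat K^{\gp},E) = H^{\bullet}(Y(\hat K^{\gp}\hat K_{\gp}),\cV(\tilde K_{\gp}))$ is computed by a cochain complex whose terms are the finite direct sums $\cC(\tilde K_{\gp})^{Y(d)}$, and hence each $\tH^{s}$ is a subquotient of finitely many copies of $\cC(\tilde K_{\gp})$. Admissibility then follows immediately because $\cC(\tilde K_{\gp})$ is admissible (its dual is the Noetherian Banach algebra $\cD(\tilde K_{\gp})$) and the category of admissible continuous Banach representations is abelian, hence closed under subquotients. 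Your argument instead first proves the identification with the $\limp{n}\limd{\hat U}$ completed cohomology of the introduction (via Shapiro's lemma and commuting cohomology with limits) and then runs Emerton's original Iwasawa-theoretic finiteness argument through Pontryagin duality. Both routes work, but the paper deliberately separates these two facts: the identification with completed cohomology is the content of Theorem \ref{thm4.3}, stated and proved (by citation to \cite{hill10}) \emph{after} the admissibility lemma, while the admissibility itself is established directly from the definition of $\tH^{\bullet}$ via the local system $\cV(\tilde K_{\gp})$ — a definition specifically chosen so that this lemma becomes nearly immediate. Your route also requires care with Mittag--Leffler vanishing and exchanging limits, which you acknowledge but gloss over; the paper's approach avoids these issues entirely because it never takes the $\limp{n}$. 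Your closing observation that $\cD(\tilde K_{\gp})$ is free of rank $|\mu|$ over $\cD(\hat K_{\gp})$ is unnecessary: Definition \ref{cts-admiss} already stipulates that admissibility can be checked over any one compact open subgroup, so admissibility over $\hat K_{\gp}$ suffices.
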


\begin{proof}
        Recall that $Y(K^{\gp}K_{\gp})$ is homotopic to a finite simplicial complex $Y$.
        Let $Y(d)$ be the set of simplices of $Y$ of dimension $d$.
        Then $\tH^{\bullet}(\tilde K_{\gp},E)$ is the cohomology of the chain complex
        \begin{equation}\label{eq:complex}
                0 \to \cC(\tilde K_{\gp})^{Y(0)} \to \cC(\tilde K_{\gp})^{Y(1)} \to \cC(\tilde K_{\gp})^{Y(2)} \to \cdots .
        \end{equation}
        Hence each cohomology group is a subquotient of finitely many copies of $\cC(\tilde K_{\gp})$.
\end{proof}

\begin{theorem}
        \label{thm4.3}
        There is a canonical isomorphism:
        \[
                \tH^{\bullet}(\hat K^{\gp}, E)
                \cong
                \left(\limp{n}\limd{K_{\gp}} H^{\bullet}(\tilde Y(\hat K^{\gp}\hat K_{\gp}),\cO_E / p^{n}) \right)
                \otimes_{\cO_E} E.
        \]
\end{theorem}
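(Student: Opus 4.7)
The approach is to express $H^\bullet(Y(\hat K^{\gp}\hat K_{\gp}), \cV(\tilde K_{\gp}))$ in terms of cohomology on the $\mu$-covers $\tilde Y(\hat K^{\gp}\hat U)$ via Shapiro's lemma, and then interchange cohomology with the limits defining $\cC(\tilde K_{\gp})$. The argument should follow closely Emerton's proof of the analogous statement for algebraic groups in \cite[\S 2.2]{emerton-interpolation}, with the main modification being to track the lifts of compact open subgroups of $\bG(\A_f)$ to the metaplectic cover $\tbG(\A)$.

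First I would unpack the coefficient module. Since $\tilde K_{\gp}$ is compact, any continuous $E$-valued function on it has bounded image, so $\cC(\tilde K_{\gp}) = \cC(\tilde K_{\gp}, \cO_E) \otimes_{\cO_E} E$ and $\cC(\tilde K_{\gp}, \cO_E) = \limp{n} \cC(\tilde K_{\gp}, \cO_E/p^n)$. Since $\cO_E/p^n$ is discrete, continuous functions into it are locally constant, so
\[
    \cC(\tilde K_{\gp}, \cO_E/p^n) = \limd{U} \cC(\tilde K_{\gp}/\hat U, \cO_E/p^n) \cong \limd{U} \Ind_{\hat U}^{\tilde K_{\gp}} \cO_E/p^n,
\]
where $U$ ranges over sufficiently small compact open normal subgroups of $K_{\gp}$ and the $\tilde K_{\gp}$-action is by right translation.

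Next, I would apply Shapiro's lemma. The local system $\cV_{\gp}(\Ind_{\hat U}^{\tilde K_{\gp}} \cO_E/p^n)$ on $Y(K^{\gp}K_{\gp})$ is canonically the pushforward along the finite covering $\tilde Y(\hat K^{\gp}\hat U) \to Y(K^{\gp}K_{\gp})$ of the constant sheaf $\cO_E/p^n$, which gives
\[
    H^\bullet(Y(K^{\gp}K_{\gp}), \cV_{\gp}(\Ind_{\hat U}^{\tilde K_{\gp}} \cO_E/p^n)) \cong H^\bullet(\tilde Y(\hat K^{\gp}\hat U), \cO_E/p^n).
\]
Because $Y(K^{\gp}K_{\gp})$ is homotopy equivalent to a finite simplicial complex by Borel--Serre, sheaf cohomology commutes with filtered direct limits of coefficients, so passing to $\limd{U}$ yields
\[
    H^\bullet(Y(K^{\gp}K_{\gp}), \cV_{\gp}(\cC(\tilde K_{\gp}, \cO_E/p^n))) \cong \limd{U} H^\bullet(\tilde Y(\hat K^{\gp}\hat U), \cO_E/p^n).
\]

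Finally, I would take $\limp{n}$ and tensor with $E$. Flat base change applied to the finite cochain complex \eqref{eq:complex} identifies $H^\bullet(Y(K^{\gp}K_{\gp}), \cV(\tilde K_{\gp}))$ with $H^\bullet(Y(K^{\gp}K_{\gp}), \cV(\tilde K_{\gp}, \cO_E)) \otimes_{\cO_E} E$, where $\cV(\tilde K_{\gp}, \cO_E)$ denotes the integral analogue of $\cV(\tilde K_{\gp})$. A Milnor-type exact sequence, derived from the Bockstein short exact sequence $0 \to \cV(\cO_E) \xrightarrow{p^n} \cV(\cO_E) \to \cV(\cO_E/p^n) \to 0$, relates $H^\bullet(Y, \cV(\cO_E))$ to $\limp{n} H^\bullet(Y, \cV(\cO_E/p^n))$ modulo a $\lim^1$ term and a Tate-module contribution. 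The main obstacle is verifying that these correction terms vanish after inverting $p$; this rests on the fact that $\cC(\tilde K_{\gp}, \cO_E)$ is a $p$-adically complete and $p$-torsion-free $\cO_E$-module, so the cohomology groups of the integral chain complex are subquotients of such modules and their derived-limit contributions are controlled in the manner laid out in \cite[\S 2.2]{emerton-interpolation}.
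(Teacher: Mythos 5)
Your argument is correct and follows essentially the same approach as the paper: the paper proves this theorem simply by citing Theorems 2.5 and 2.10 of \cite{hill10}, whose proof (modelled on Emerton's argument in \cite[\S 2.2]{emerton-interpolation}) carries out exactly the Shapiro's-lemma reduction, the direct-limit interchange over the Borel--Serre finite complex, and the derived-projective-limit analysis that you sketch. The genuine technical content lies in your final step, where one must verify that the derived-limit correction terms vanish after inverting $p$; your sketch correctly isolates this as the main obstacle but defers the verification, which is precisely what the cited theorems supply.
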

\begin{proof}
        This is a special case of \cite[Theorems 2.5 and 2.10]{hill10}.
\end{proof}

\begin{corollary}
        The action of $\tilde K_{\gp}$ on $\tH^{\bullet}(\hat K^{\gp},E)$ extends to
        a canonical action of $\tilde G_{\gp}$.
\end{corollary}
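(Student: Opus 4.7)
The plan is to use Theorem \ref{thm4.3} to transfer the problem to the alternative description of $\tH^{\bullet}(\hat K^{\gp}, E)$ as an inverse limit of direct limits of cohomology of covering spaces. On the direct limit side, there is a natural geometric action of $\tilde G_{\gp}$ coming from right multiplication on $\tbG(\A)$, exactly parallel to how $\tilde G_{\gp}$ acts on the classical spaces $H^{\bullet}_{\cl,\varepsilon}(\hat K^{\gp}, W)$ defined in the introduction.

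Concretely, I would first construct the action on $\limd{K_{\gp}} H^{s}(\tilde Y(\hat K^{\gp}\hat K_{\gp}),\cO_E / p^{n})$ for each fixed $n$. For $g \in \tilde G_{\gp}$ and a class at level $\hat K_{\gp}$, one passes to the deeper level $\hat K_{\gp} \cap g \hat K_{\gp} g^{-1}$ (both $g$ and $\hat K_{\gp}$ lie in $\tilde G_{\gp}$, so conjugation makes sense) and then applies the homeomorphism
\[
\tilde Y(\hat K^{\gp}(\hat K_{\gp} \cap g\hat K_{\gp}g^{-1})) \xrightarrow{\cdot g} \tilde Y(\hat K^{\gp}(g^{-1}\hat K_{\gp}g \cap \hat K_{\gp}))
\]
induced by right translation by $g$ on $\tbG(\A)$. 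Standard cocycle checks (as in \cite{emerton-interpolation}) show that these isomorphisms patch together to give a well-defined smooth action of $\tilde G_{\gp}$ on the direct limit, and the action is plainly compatible with the reduction maps $\cO_E/p^{n+1} \twoheadrightarrow \cO_E/p^n$, so it passes to $\varprojlim_n$ and remains continuous after tensoring with $E$.

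The main thing to verify is that this $\tilde G_{\gp}$-action, when restricted to $\tilde K_{\gp}$, coincides under the isomorphism of Theorem \ref{thm4.3} with the action already defined via the local system $\cV(\tilde K_{\gp})$. This requires carefully unwinding the identification: recall that $\cV(\tilde K_{\gp})$ was constructed using the \emph{right}-translation action of $\tilde K_{\gp}$ on $\cC(\tilde K_{\gp})$, while the action on cohomology is induced by \emph{left}-translation. The key observation is that for $k \in \tilde K_{\gp}$, the geometric right-translation on $\tilde Y(\hat K^{\gp} \hat K_{\gp}^{(m)})$ (for a decreasing filtration $\hat K_{\gp}^{(m)}$ of $\hat K_{\gp}$ by normal open subgroups) matches, after passing to the cohomology of the local system, precisely the left-translation action on $\cC(\tilde K_{\gp})$ after taking $\hat K_{\gp}^{(m)}$-invariants and then the limit.

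The only real obstacle is this bookkeeping of left versus right conventions, and the proof is essentially identical to that of \cite[Proposition 2.3.3]{emerton-interpolation} in the non-metaplectic setting; the arguments carry over verbatim since all groups and coverings involved lie inside $\tbG(\A)$, and the lifts $\hat K_{\gp}$ and $\hat\bG(k)$ were chosen so that the quotient $\tilde Y(\hat K^{\gp} \hat K_{\gp})$ is a genuine $\mu$-cover of $Y(K^{\gp} K_{\gp})$. Continuity of the extended action is automatic because every element of $\tilde G_{\gp}$ acts through a finite-level geometric translation, and the resulting action preserves the natural topology coming from the inverse-limit/direct-limit construction.
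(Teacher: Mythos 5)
Your proposal is correct and follows essentially the same route as the paper: both rely on Theorem \ref{thm4.3} to recast $\tH^{\bullet}(\hat K^{\gp},E)$ in a form where the geometric right-translation action of $\tilde G_{\gp}$ on the tower of covering spaces $\tilde Y(\hat K^{\gp}\hat K_{\gp})$ is already visible. The paper states this as an immediate consequence, whereas you spell out the translation-between-levels mechanism and the left/right convention check; these details are standard (and exactly parallel to the algebraic case) and do not constitute a different argument.
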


\begin{proof}
        This is immediate from the previous theorem, since
        $\tilde G_{\gp}$ already acts on the space
        \[
                \limd{K_{\gp}} H^{\bullet}(\tilde Y(\hat K^{\gp}\hat K_{\gp}),\cO_E/p^{n}).
        \]
\end{proof}

\begin{corollary}
        The group $\tH^{\bullet}(\hat K^{\gp},E)$ is independent (up to a canonical isomorphism)
        on the choice of $K_{\gp}$.
\end{corollary}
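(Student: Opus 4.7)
The plan is to obtain the desired independence as an immediate consequence of Theorem \ref{thm4.3}. That theorem identifies $\tH^{\bullet}(\hat K^{\gp}, E)$, as defined via the local system $\cV(\tilde K_{\gp})$ using a fixed auxiliary choice of $K_{\gp}$, with the completed cohomology
\[
        \left(\limp{n}\limd{K_{\gp}'} H^{\bullet}(\tilde Y(\hat K^{\gp}\hat K_{\gp}'),\cO_E / p^{n}) \right) \otimes_{\cO_E} E,
\]
in which $K_{\gp}'$ ranges over all sufficiently small compact open subgroups of $G_{\gp}$. The right-hand side manifestly makes no reference to the chosen $K_{\gp}$.

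Concretely, I would argue as follows. Let $K_{\gp}$ and $K_{\gp}''$ be any two compact open subgroups of $G_{\gp}$ each admitting lifts $\hat K_{\gp}$ and $\hat K_{\gp}''$ to $\tilde \scG$, and denote by $\tH^{\bullet}(\hat K^{\gp},E)_{K_{\gp}}$ and $\tH^{\bullet}(\hat K^{\gp},E)_{K_{\gp}''}$ the two a priori distinct spaces produced by the definition. Applying Theorem \ref{thm4.3} to each choice yields canonical isomorphisms of both spaces with the single completed cohomology displayed above. Composing the first isomorphism with the inverse of the second produces the desired canonical isomorphism
\[
        \tH^{\bullet}(\hat K^{\gp},E)_{K_{\gp}} \;\cong\; \tH^{\bullet}(\hat K^{\gp},E)_{K_{\gp}''}.
\]
There is essentially no obstacle beyond correctly quoting Theorem \ref{thm4.3}; the only thing to note is that the isomorphism is canonical precisely because Theorem \ref{thm4.3}'s isomorphism is, and both isomorphisms are equivariant for the actions of $\tilde G_{\gp}$ and $\cH^{\gp}$, so the resulting identification respects all the additional structure carried by $\tH^{\bullet}(\hat K^{\gp},E)$.
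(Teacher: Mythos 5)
Your proposal is correct and takes exactly the same approach as the paper, which simply states that the corollary is immediate from Theorem \ref{thm4.3}. You have just spelled out the routine details of why the right-hand side of that theorem's isomorphism makes no reference to $K_{\gp}$.
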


\begin{proof}
        This is immediate from Theorem \ref{thm4.3}.
\end{proof}

\subsection{Some spectral sequences}

\begin{theorem}
        Let $W$ be any continuous representation of $\tilde K_{\gp}$ over $E$,
        and let $W'$ be the contragredient representation on the continuous dual space.
        There is a spectral sequence
        \[
                \Ext^{r}_{\tilde K_{\gp}}(W , \tH^{s}(\hat K^{\gp},E))
                \Rightarrow
                H^{r+s}(Y( K^{\gp} K_{\gp}) , \cV_{\gp}(W')).
        \]
\end{theorem}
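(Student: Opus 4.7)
The plan is to identify both sides as the cohomology of canonically isomorphic cochain complexes, and then to obtain the spectral sequence as the standard hyper-derived-functor spectral sequence applied to a resolution by acyclic modules.

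First, I would identify $\cV_\gp(W')$ with a $\Hom$-version of $\cV(\tilde K_\gp)$. Since $\cC(\tilde K_\gp)$ carries commuting left- and right-translation actions of $\tilde K_\gp$, evaluation at the identity defines a canonical isomorphism
\[
        \Hom^{\cts}_{\tilde K_\gp}\!\bigl(W,\,\cC(\tilde K_\gp)_L\bigr)
        \;\xrightarrow{\sim}\;
        W',
\]
where the subscript $L$ indicates equivariance with respect to left-translation; the residual right-translation action on the left-hand side matches the contragredient action on $W'$. Since $\cV(\tilde K_\gp)$ was constructed using the right-translation action, passing to the associated local systems yields a canonical isomorphism
\[
        \cV_\gp(W')
        \;\cong\;
        \Hom^{\cts}_{\tilde K_\gp}\!\bigl(W,\,\cV(\tilde K_\gp)\bigr),
\]
taken stalkwise.

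Next, since $Y = Y(\hat K^\gp \hat K_\gp)$ is homotopic to a finite simplicial complex, the cohomology $\tH^\bullet(\hat K^\gp, E) = H^\bullet\!\bigl(Y, \cV(\tilde K_\gp)\bigr)$ is computed by the finite cochain complex $C^\bullet$ of \eqref{eq:complex}, each term of which is a finite direct sum of copies of $\cC(\tilde K_\gp)$. By Theorem \ref{thm:standardfacts}(1), $\cC(\tilde K_\gp)$ is continuously injective as a $\tilde K_\gp$-module, so each $C^i$ is acyclic for the functor $\Hom^{\cts}_{\tilde K_\gp}(W,-)$. Applying this functor termwise and using the identification from the previous paragraph yields a cochain complex canonically isomorphic to $C^\bullet\!\bigl(Y, \cV_\gp(W')\bigr)$, whose cohomology is $H^\bullet\!\bigl(Y, \cV_\gp(W')\bigr)$.

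Finally, the desired spectral sequence is the hyper-derived-functor spectral sequence
\[
        E_2^{r,s}
        \;=\;
        \Ext^{r}_{\tilde K_\gp}\!\bigl(W,\,H^{s}(C^\bullet)\bigr)
        \;\Rightarrow\;
        H^{r+s}\!\bigl(\Hom^{\cts}_{\tilde K_\gp}(W, C^\bullet)\bigr),
\]
which is available precisely because $C^\bullet$ is a bounded complex of $\Hom^{\cts}_{\tilde K_\gp}(W,-)$-acyclic modules. Substituting $H^{s}(C^\bullet) = \tH^{s}(\hat K^\gp, E)$ on the $E_2$-page and $H^{r+s}\!\bigl(\Hom^{\cts}_{\tilde K_\gp}(W, C^\bullet)\bigr) = H^{r+s}\!\bigl(Y, \cV_\gp(W')\bigr)$ on the abutment yields the claim. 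The principal technical obstacle is topological bookkeeping: one must verify that the evaluation-at-identity isomorphism, the compatibility of left/right translation under passage to local systems, and the injectivity statement from Theorem \ref{thm:standardfacts}(1) all behave correctly in the continuous category, so that the hyper-derived-functor spectral sequence genuinely assembles in the topological setting rather than merely in the abstract one.
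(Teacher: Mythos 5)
Your proof follows essentially the same route as the argument the paper invokes by citation (Hill's Theorem~3.5, itself modelled on Emerton's): identify $\cV_\gp(W')$ with $\Hom^{\cts}_{\tilde K_\gp}(W, \cV(\tilde K_\gp))$ via Frobenius reciprocity, realise $\tH^\bullet$ as the cohomology of a bounded complex of copies of the continuously injective $\tilde K_\gp$-module $\cC(\tilde K_\gp)$, and take the hyper-derived-functor spectral sequence for $\Hom^{\cts}_{\tilde K_\gp}(W,-)$. Just be aware that the ``topological bookkeeping'' you defer is the substantive content of the cited reference---the category of continuous $\tilde K_\gp$-modules is not abelian, so the Cartan--Eilenberg machinery behind the hypercohomology spectral sequence does not apply verbatim and one needs the strongly-Hausdorff framework of \S 3.1---but the outline you give is the correct one.
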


\begin{proof}
        This is a special case of Theorem 3.5 of \cite{hill10}.
        We note that for algebraic $W$, Emerton's original proof in \cite{emerton-interpolation} extends to the metaplectic case without modification.
\end{proof}

In what follows we let $\cG=\Rest^{k_{\gp}}_{\Qp}(\bG \times_{k} k_{\gp})$.
Thus $\cG$ is a reductive group over $\Qp$ and we may canonically identify
$\cG(\Qp)$ with $\bG(k_{\gp})$.
Suppose that $W$ is an algebraic representation of $\cG$ over $E$.
For a character $\varepsilon : \mu\to E^{\times}$, we let $W\otimes \varepsilon$ be the
representation of $\tilde K_{\gp}$ in which $\hat K_{\gp}$ acts through $K_{\gp}$ and $\mu$
acts by $\varepsilon$.

\begin{corollary}
        There is a spectral sequence
        \[
                \Ext^{r}_{\hat K_{\gp}}(W' , \tH^{s}_{\varepsilon}(\hat K^{\gp},E))
                \Rightarrow
                H^{r+s}(Y(K^{\gp} K_{\gp}), \cV_{W\otimes\varepsilon}).
        \]
\end{corollary}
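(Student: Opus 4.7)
The strategy is to deduce this from the preceding theorem by specialising the continuous $\tilde K_{\gp}$-representation there to $W' \otimes \varepsilon^{-1}$, and then performing some bookkeeping with $\mu$-characters. Since $W$ is finite-dimensional and $\mu$ is a finite group, the continuous contragredient of $W' \otimes \varepsilon^{-1}$ is canonically identified with $W \otimes \varepsilon$, so the abutment of the spectral sequence produced by the theorem is
\[
H^{r+s}(Y(K^{\gp}K_{\gp}), \cV_{\gp}(W\otimes\varepsilon)).
\]
By the identification $\cV_{\gp}(W\otimes\varepsilon) \cong \cV_{W\otimes\varepsilon}$ recorded at the beginning of this section, this coincides with the cohomology group appearing in the statement of the corollary.

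It remains to identify the $E_{2}$-term $\Ext^{r}_{\tilde K_{\gp}}(W' \otimes \varepsilon^{-1}, \tH^{s}(\hat K^{\gp},E))$ with $\Ext^{r}_{\hat K_{\gp}}(W', \tH^{s}_{\varepsilon}(\hat K^{\gp},E))$. I first decompose $\tH^{s}(\hat K^{\gp},E) = \bigoplus_{\eta} \tH^{s}_{\eta}(\hat K^{\gp},E)$ into $\mu$-isotypic summands, which commutes with $\Ext^{r}_{\tilde K_{\gp}}(W' \otimes \varepsilon^{-1}, -)$ because $\mu$ is finite and the summands are already direct summands at the chain level. Since $\mu$ acts on the source by $\varepsilon^{-1}$, only the summand on which $\mu$ acts by the same character $\varepsilon^{-1}$ contributes; by Lemma \ref{epsilon-eigenspace} this is precisely $\tH^{s}_{\varepsilon}(\hat K^{\gp},E)$.

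Finally, the type~1 hypothesis gives a splitting $\tilde K_{\gp} \cong \hat K_{\gp} \oplus \mu$ of topological groups, so any projective (or injective) $\hat K_{\gp}$-resolution of $W'$ tensored with the character $\varepsilon^{-1}$ yields a resolution of $W' \otimes \varepsilon^{-1}$ by suitable $\tilde K_{\gp}$-modules, and the $\Hom$-complexes computing the two Ext groups agree termwise once source and target carry matching $\mu$-characters. Combining these identifications with the spectral sequence produced by the previous theorem gives the desired conclusion. There is no real obstacle here: the argument is a routine bookkeeping exercise in the type~1 setting, once one has chosen the correct contragredient input to feed into the theorem.
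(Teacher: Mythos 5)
Your proposal is correct and follows essentially the same strategy as the paper: both apply the preceding theorem to $(W\otimes\varepsilon)' = W' \otimes \varepsilon^{-1}$ and then identify the $E_2$-term using the $\mu$-isotypic decomposition together with Lemma~\ref{epsilon-eigenspace}. The only cosmetic difference is in how the final Ext identification is justified — the paper invokes degeneration of the Hochschild--Serre spectral sequence for $\mu \lhd \tilde K_{\gp}$ (which collapses since $\mu$ is finite and $\operatorname{char} E = 0$), while you argue directly via the splitting $\tilde K_{\gp} \cong \hat K_{\gp} \times \mu$ and a resolution, but these amount to the same computation.
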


\begin{proof}
        We shall apply the previous theorem to the representation $(W\otimes\varepsilon)'$.
        To simplify notation, we shall write $\tH^{s}$ in place of $\tH^{s}(\hat K^{\gp},E)$.
        By the theorem, we have a spectral sequence whose $E_{2}^{r,s}$ term is
        $\Ext^{r}_{\tilde K_{\gp}}(W'\otimes\varepsilon' , \tH^{s})$.
        The corollary is proved by the following calculation:
        \begin{align*}
                \Ext^{r}_{\tilde K_{\gp}}(W'\otimes\varepsilon' , \tH^{s})
                &=
                \Ext^{r}_{\hat K_{\gp}}(W', \Hom_{\mu}( \varepsilon' , \tH^{s}))\\
                &=
                \Ext^{r}_{\hat K_{\gp}}(W' , \tH^{s}_{\varepsilon}).
        \end{align*}
        In the first line above we have used the Hochschild--Serre spectral sequence, which degenerates
        because $\mu$ is finite and $E$ has characteristic zero.
        The second line is immediate from Lemma \ref{epsilon-eigenspace}.
\end{proof}

\begin{corollary}
        \label{spectral-sequence}
        There is a spectral sequence
        \[
                \Ext^{r}_{\gg}(W' , \tH^{s}_{\varepsilon}(\hat
K^{\gp},E)_{\la})
                \Rightarrow
                H^{r+s}_{\cl, \varepsilon}(\hat K^{\gp}, W).
        \]
\end{corollary}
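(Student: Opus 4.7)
The plan is to obtain this spectral sequence by taking a direct limit in $K_\gp$ of the spectral sequence from the previous corollary, and then identifying the resulting $E_2$-term via Emerton's Theorem \ref{thm:liecohomology}.

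First, for each sufficiently small compact open $K_\gp \subset G_\gp$ admitting a lift $\hat K_\gp$, the previous corollary provides a spectral sequence
\[
\Ext^r_{\hat K_\gp}\bigl(W', \tH^s_\varepsilon(\hat K^\gp, E)\bigr) \Rightarrow H^{r+s}\bigl(Y(K^\gp K_\gp), \cV_{W \otimes \varepsilon}\bigr).
\]
These are functorial in $K_\gp$: as $K_\gp$ shrinks, the transition map on the abutment is pullback in cohomology, and on the $E_2$-page is restriction of scalars in Ext. I would then pass to the filtered direct limit of this system. By the very definition of classical cohomology, the abutment becomes $H^{r+s}_{\cl, \varepsilon}(\hat K^\gp, W)$, while the $E_2$-term becomes $\limd{\hat K_\gp} \Ext^r_{\hat K_\gp}\bigl(W', \tH^s_\varepsilon(\hat K^\gp,E)\bigr)$.

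To identify this limit with $\Ext^r_\gg\bigl(W', \tH^s_{\varepsilon, \la}\bigr)$ I would invoke the second identification in Theorem \ref{thm:liecohomology}. This applies because $\tH^s_\varepsilon(\hat K^\gp,E)$ was shown in the earlier lemma to be an admissible continuous representation of $\tilde \scG$, and $W'$ is finite-dimensional algebraic. The two directed systems -- over all compact open $\hat K_\gp$ and over finite-index subgroups of a fixed good analytic open subgroup of $\tilde \scG$ -- are cofinal by commensurability of compact opens in $G_\gp$, so the two direct limits coincide.

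The main technical point is verifying that forming the filtered direct limit genuinely commutes with the spectral sequence machinery. This reduces to exactness of filtered colimits of $E$-vector spaces, together with boundedness in both directions: the $s$-direction is bounded above because $Y(K^\gp K_\gp)$ is homotopic to a finite simplicial complex, and the $r$-direction is bounded because compact $p$-adic Lie groups have finite cohomological dimension on admissible continuous representations. Given this bounded convergence, the interchange of limit and spectral sequence is routine, and the identification of the $E_2$-page via Theorem \ref{thm:liecohomology} then delivers the stated spectral sequence.
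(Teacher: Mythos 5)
Your proposal is correct and follows exactly the route the paper takes: pass to the filtered colimit over levels $\hat K_{\gp}$ of the spectral sequence from the preceding corollary, then identify the resulting $E_2$-term with $\Ext^{r}_{\gg}(W', \tH^{s}_{\varepsilon,\la})$ by the colimit formula in Theorem \ref{thm:liecohomology}. You spell out the cofinality and bounded-convergence points that the paper leaves implicit, but the argument is the same.
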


\begin{proof}
This follows by taking the direct limit of the formula of the previous
corollary, over levels $\hat K_{\gp}$, and applying Theorem
\ref{thm:liecohomology}.
\end{proof}

\begin{definition}
        We shall say that the triple $(\tbG, \hat K^{\gp},\varepsilon)$ satisfies the
        \emph{edge map criterion} in dimension $n$
        if for every finite dimensional algebraic representation $W$ of $\cG$,
        the edge map in the spectral sequence above gives an isomorphism
        \[
                \Hom_{\gg}(W', \tH^{n}_{\varepsilon}(\hat K^{\gp}, E)_{\la})
                \cong
                H^{n}_{\cl,\varepsilon}(\hat K^{\gp},W).
        \]
\end{definition}

\subsection{Calculation of certain spaces $\tH^{n}_{\varepsilon}$}

In this section we let $\bG$ be absolutely simple, simply connected, and of positive real rank.
We recall if $k$ contains a primitive $m$-th root of unity, then there is a canonical
metaplectic extension of $\bG$ by $\mu_{m}$, and this extension is the universal metaplectic
extension if $\mu_{m}$ is the group of all roots of unity in $k$.
We shall assume that $\tbG$ is one of these canonical extensions.

Recall that such a group $\bG$ satisfies strong approximation,
and so we may identify the arithmetic quotient $Y(K^{\gp}K_{\gp})$
with $\Gamma \backslash X$, where $X$ is the symmetric space $G_{\infty}/K_{\infty}$,
and $\Gamma$ is the congruence subgroup of level $K^{\gp}K_{\gp}$.
The local system $\cV(K_{\gp})$ may by identified with $\Gamma \backslash (X \times \cC(\tilde K_{\gp}))$,
where the action of $\Gamma$ on $\cC(\tilde K_{\gp})$ is by right-translation.
As a consequence, we have
\[
        \tH^{\bullet}(\hat K^{\gp},E)
        =
        H^{\bullet}_{\group}(\Gamma, \cC(\tilde K_{\gp})).
\]
Here we are regarding $\Gamma$ as a subgroup of $\tilde K_{\gp}$ through the
maps
\[
        \Gamma  \cong \{g\in \hat\bG(k) : \pr(g)\in \Gamma\} \hookrightarrow \tilde K_{f} = \tilde K_{\gp}\times  \hat K^{\gp} \to \tilde K_{\gp}.
\]

\begin{theorem}
        Let $\tbG$ be as described above.
        If $\varepsilon$ is non-trivial then $\tH^{0}_{\varepsilon}(\hat K^{\gp},E)=0$.
        If $\varepsilon$ is trivial then $\tH^{0}_{\varepsilon}(\hat K^{\gp},E)\cong E$.
\end{theorem}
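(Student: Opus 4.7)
The plan is to apply Theorem \ref{thm4.3} to reduce the computation of $\tH^0$ to the zeroth cohomology of the metaplectic covers $\tilde Y(\hat K^\gp \hat K_\gp)$, use that these covers are connected, and then track the $\mu$-action.

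By Theorem \ref{thm4.3},
\[
\tH^0(\hat K^\gp, E) \cong \left( \limp{n} \limd{K_\gp} H^0(\tilde Y(\hat K^\gp \hat K_\gp), \cO_E / p^n) \right) \otimes_{\cO_E} E.
\]
Using the description in section \ref{sect:kubota}, one has $\tilde Y(\hat K^\gp \hat K_\gp) \cong \Gamma_0 \backslash X$ with $\Gamma_0 = \ker \kappa$. The symmetric space $X$ is simply connected, and the Kubota symbol $\kappa : \Gamma \to \mu_m$ is surjective; hence $\tilde Y$ is connected. Thus each $H^0(\tilde Y(\hat K^\gp \hat K_\gp), \cO_E / p^n) = \cO_E / p^n$, and as $K_\gp$ shrinks the transition maps are pullbacks along surjections of connected spaces, which are identities on $H^0$. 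Taking the limits yields $\tH^0 \cong E$.

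It then remains to see that $\mu$ acts trivially on this $E$. Under the identification of Theorem \ref{thm4.3}, the $\mu$-action is by deck transformations of $\tilde Y \to Y$; since $\tilde Y$ is connected, these act trivially on $H^0$. By Lemma \ref{epsilon-eigenspace}, $\tH^0_\varepsilon$ is the $\varepsilon'$-eigenspace for $\mu$, so it equals $E$ when $\varepsilon$ is trivial and $0$ otherwise.

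No substantial obstacle arises: once Theorem \ref{thm4.3} is in hand, the only inputs needed are connectedness of $X$ and surjectivity of $\kappa$. The one compatibility to check carefully is that the $\mu$-action on both sides of the isomorphism in Theorem \ref{thm4.3} matches (i.e.\ that the isomorphism of \cite{hill10} is $\mu$-equivariant for the deck-transformation action on the right and the left-translation action on $\cC(\tilde K_\gp)$ on the left). Alternatively one may argue directly from $\tH^0 = \cC(\tilde K_\gp)^\Gamma$, using strong approximation together with surjectivity of $\kappa$ on every congruence subgroup (a consequence of non-congruence of $\ker \kappa$) to show that the image of $\Gamma$ in $\tilde K_\gp$ is dense, from which the same conclusion follows.
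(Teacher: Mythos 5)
Your argument is correct in outcome and closely tracks the paper's in substance, but is packaged through Theorem \ref{thm4.3} rather than directly. The paper works at infinite level: it identifies $\tH^{0}(\hat K^{\gp},E)$ with $\cC(\tilde K_{\gp})^{\Gamma}$ (via the group-cohomology description, $\Gamma$ acting by right translation through its image in $\tilde K_{\gp}$), then uses that this image is dense to conclude that the invariants are the constant functions, on which the left $\mu$-action is trivial. You instead reduce to the finite-coefficient cohomology of the $\mu$-covers $\tilde Y(\hat K^{\gp}\hat K_{\gp})$ and argue from their connectedness. Both routes rest on the same two ingredients --- strong approximation for $\bG$ and surjectivity of the Kubota symbol --- but yours makes the role of the latter explicit (it is exactly what makes $\tilde Y$ connected), whereas the paper buries it in the unstated density assertion. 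You were also right to flag the $\mu$-equivariance of the isomorphism of Theorem \ref{thm4.3}; it does hold, being part of the $\tilde K_{\gp}$-equivariance of that theorem.

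One imprecision worth fixing: you assert that surjectivity of $\kappa$ on every congruence subgroup is ``a consequence of non-congruence of $\ker\kappa$''. That inference does not hold in general: non-congruence of the kernel only shows $\kappa$ is \emph{nontrivial}, not surjective, on every congruence subgroup. (Abstractly, take $\Gamma=\Z$ with the congruence filtration $\{2^{n}\Z\}$ and $\kappa$ the reduction $\Z\to\Z/6$: then $\ker\kappa=6\Z$ is non-congruence, but $\kappa(2\Z)$ is a proper subgroup.) The statement you actually need --- and which is equally required for the paper's density step, and for your direct limit over shrinking $K_{\gp}$ to yield $E$ rather than something larger --- is that the induced map $\Cong(\bG)\to\mu_{m}$ is surjective. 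This is a genuine property of the canonical metaplectic covers, tied to the metaplectic congruence subgroup problem (cf.\ the paper's reference to Prasad--Raghunathan), and should be invoked as such rather than deduced from non-congruence of $\ker\kappa$ alone.
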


\begin{proof}
By strong approximation, the arithmetic quotient $Y(K^{\gp}K_{\gp})$ is connected,
and so $\tH^{0}(\hat K^{\gp})$ is the space of elements of $\cC(\tilde K_{\gp})$,
which are right-$\tilde K_{\gp}$-invariant.
This is simply the space of constant functions.
The action of $\mu$ by left translations is trivial on the constant functions.
This proves the result.
\end{proof}

%

By the strong approximation theorem, the closure of $\Gamma$ in $\bG(\A_{f})$ is $K_{f}$.
We shall write $\overline K_{f}$ for the profinite completion of $\Gamma$.
There is a canonical surjective homomorphism
$\overline K_{f} \to K_{f}$.
The \emph{congruence kernel} $\Cong(\bG)$ is defined to be the kernel of this map.
We therefore have a short exact sequence of profinite groups:
\[
        1 \to \Cong(\bG) \to \overline K_{f} \to K_{f} \to 1.
\]
The congruence kernel measures the extent to which $\Gamma$ has non-congruence
subgroups of finite index.
The Kubota symbol gives a map $\Cong(\bG)\to\mu_{m}$, and we let $\Cong(\bG)_{0}$ be the kernel of this map.
It is conjectured (and in many cases proved) that when $\mu_{m}$ is the group of all roots of unity in $k$,
the Kubota symbol is an isomorphism whenever $\bG$ has real rank at least $2$.
When $\bG$ has real rank $1$, it is conjectured that $\Cong(\bG)$ is infinite.


\begin{theorem}
        Let $\tbG$ be as described above.
        There is a canonical isomorphism
        \[
                \tH^{1}(\hat K^{\gp},E)
                =
                \Hom_{\cts,\hat K^{\gp}}(\Cong_{0},E).
        \]
\end{theorem}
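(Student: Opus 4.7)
The plan is to compute $\tH^{1}(\hat K^{\gp},E)$ by reducing to continuous cohomology of profinite groups and applying Hochschild--Serre twice. By the discussion preceding the theorem, strong approximation identifies
\[
\tH^{1}(\hat K^{\gp},E) = H^{1}_{\group}(\Gamma, \cC(\tilde K_{\gp})),
\]
with $\Gamma$ acting by right translation through its embedding $\Gamma \hookrightarrow \tilde K_{\gp}$. Using $\tilde K_{\gp} = \hat K_{\gp} \oplus \mu$ together with the relation $\tau_{1}(\gamma) = \kappa(\gamma)\tau_{2}(\gamma)$ from Section \ref{sect:kubota}, this embedding takes the explicit form $\gamma \mapsto (\tau_{2}(\gamma)_{\gp}, \kappa(\gamma))$.

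Next I would extend the action by continuity to a homomorphism $\phi\colon \overline K_{f} \to \tilde K_{\gp}$ from the profinite completion. Surjectivity of $\phi$ follows from density of its image combined with compactness of $\tilde K_{\gp}$: strong approximation gives density of $\tau_{2}(\Gamma)_{\gp}$ in $\hat K_{\gp}$, the Kubota symbol is surjective onto $\mu$, and the kernel $\Gamma_{0} = \ker\kappa$ also projects densely onto $\hat K_{\gp}$ because it has finite index in $\Gamma$. Comparing group and continuous cohomology (via the machinery of \cite{hill10} used already in Theorem~\ref{thm4.3}), the problem reduces to computing $H^{1}_{\cts}(\overline K_{f}, \cC(\tilde K_{\gp}))$. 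Letting $N = \ker\phi$, the Hochschild--Serre spectral sequence for $1 \to N \to \overline K_{f} \to \tilde K_{\gp} \to 1$ collapses: $N$ acts trivially on $\cC(\tilde K_{\gp})$, and the latter is continuously coinduced from the trivial subgroup of $\tilde K_{\gp}$, hence acyclic. This yields
\[
H^{1}_{\cts}(\overline K_{f}, \cC(\tilde K_{\gp})) \cong H^{1}_{\cts}(N, E) = \Hom_{\cts}(N, E).
\]

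The final step is to identify $N$. An element $g \in \overline K_{f}$ lies in $N$ iff its image in $K_{f}$ lies in $\{1\}\times K^{\gp}$ and $\kappa(g) = 1$; this gives a short exact sequence
\[
1 \to \Cong_{0} \to N \to K^{\gp} \to 1,
\]
the key point being that the kernel of the projection $N \to K^{\gp}$ consists precisely of those elements of the congruence kernel $\Cong$ on which the Kubota symbol vanishes, i.e.\ $\Cong_{0}$. A second Hochschild--Serre computation, combined with the vanishing of $\Hom_{\cts}(K^{\gp}, E)$ and $H^{2}_{\cts}(K^{\gp}, E)$ (valid because $K^{\gp}$ is compact with only torsion abelianisation in the relevant range, so admits no continuous homomorphism into the torsion-free $p$-adic space $E$), produces
\[
\Hom_{\cts}(N, E) \cong \Hom_{\cts}(\Cong_{0}, E)^{K^{\gp}} = \Hom_{\cts,\hat K^{\gp}}(\Cong_{0}, E),
\]
where the final equality uses that the central $\mu$-part of $\hat K^{\gp}$ acts trivially on $\Cong_{0}$ by conjugation.

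The main technical obstacles will be, first, justifying the identification of $N$ as the extension $1 \to \Cong_{0} \to N \to K^{\gp} \to 1$ cleanly (which depends on the image of $\kappa|_{\Cong}$ being all of $\mu$, so that the $\mu$-component of $\phi$ is completely absorbed into the congruence kernel side); and second, verifying the vanishing of the relevant continuous cohomology groups of $K^{\gp}$ in full generality, particularly when $p$ has several places in $k$ so that $K^{\gp}$ may contain a $p$-adic analytic factor. The comparison between discrete group cohomology and continuous cohomology on $\overline K_f$ is a further subtlety, but one already handled by the results of Hill cited in Theorem~\ref{thm4.3}.
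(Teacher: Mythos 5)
Your proposal is in the same spirit as the paper's proof and relies on the same two essential inputs: the description of $H^{\bullet}(\Gamma,\cC(\bar K_{f}))$ from \cite{hill10} (giving the vanishing in degree $1$), and the vanishing of $H^{1}_{\cts}(K^{\gp},E)$ and $H^{2}_{\cts}(K^{\gp},E)$. The organization is different: instead of the paper's two sequential double-spectral-sequence arguments (first $\Gamma\times\Cong_{0}$ acting on $\cC(\bar K_{f})$, then $\Gamma\times\hat K^{\gp}$ acting on $\cC(\tilde K_{f})$), you package $\Cong_{0}$ and $\hat K^{\gp}$ into the single group $N=\ker\bigl(\bar K_{f}\to\tilde K_{\gp}\bigr)$, identify $\cC(\tilde K_{\gp})$ as the coinduced module $\operatorname{CoInd}_{N}^{\bar K_{f}}(E)$ so that Shapiro gives $H^{\bullet}_{\cts}(\bar K_{f},\cC(\tilde K_{\gp}))\cong H^{\bullet}_{\cts}(N,E)$, and then split $N$ by a second Hochschild--Serre. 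This is a clean restructuring, and it does ultimately give the right answer; the identification of $N$ via $1\to\Cong_{0}\to N\to\hat K^{\gp}\to 1$ and the worry you raise about $\kappa|_{\Cong}$ are both sound (the surjectivity of $\bar K_{f}\to\tilde K_{f}$ is used in the paper too, and needed in both arguments).

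There is, however, a genuine gap at the very step on which everything hinges. You reduce $\tH^{1}(\hat K^{\gp},E)=H^{1}(\Gamma,\cC(\tilde K_{\gp}))$ to $H^{1}_{\cts}(\bar K_{f},\cC(\tilde K_{\gp}))$ by appeal to ``the machinery of \cite{hill10} used already in Theorem~\ref{thm4.3},'' but Theorem~\ref{thm4.3} identifies $\tH^{\bullet}$ with a limit of torsion cohomology; it does not compare $H^{\bullet}(\Gamma,-)$ with $H^{\bullet}_{\cts}(\bar K_{f},-)$. Such a comparison fails in general: for $M=\cC(\bar K_{f})$, one has $H^{2}_{\cts}(\bar K_{f},M)=0$ by continuous injectivity, while $H^{2}(\Gamma,M)$ is the $p$-adic completion of stable $H^{2}$ of $\Gamma$, which is typically non-zero (and is closely related to the metaplectic kernel). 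The degree-one isomorphism you want for $M=\cC(\tilde K_{\gp})$ \emph{is} correct, but it must be \emph{proved}: the clean way is exactly the paper's trick, namely to view $\cC(\bar K_{f})$ as a module over $\Gamma\times N$ (right and left translations respectively), compute the total cohomology in two ways, and feed in the vanishing $H^{1}(\Gamma,\cC(\bar K_{f}))=0$ from \cite{hill10}. Presenting the reduction as already handled leaves the argument circular at its crux. A second, smaller gap: you dismiss the vanishing of $H^{2}_{\cts}(K^{\gp},E)$ as following from ``torsion abelianisation,'' but that argument gives only the degree-one vanishing. The degree-two vanishing is the bulk of the paper's proof, requiring the perfectness of hyperspecial subgroups and a $\varprojlim^{(1)}$ argument for $U^{S}$, a Haar-measure argument for bad places of residue characteristic $\ne p$, and Lazard's isomorphism together with Whitehead's second lemma for bad places of residue characteristic $p$ (which addresses precisely the $p$-adic-analytic-factor worry you raise). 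Finally, a notational slip: $\hat K^{\gp}$ is the \emph{lift} of $K^{\gp}$, canonically isomorphic to it, so the equality $\Hom_{\cts}(\Cong_{0},E)^{K^{\gp}}=\Hom_{\cts,\hat K^{\gp}}(\Cong_{0},E)$ is immediate and does not involve $\mu$ (you are thinking of the preimage $\tilde K^{\gp}$, not the lift $\hat K^{\gp}$).
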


\begin{proof}
        Recall that $K_{f}$ is a compact open subgroup of $\bG(\A_{f})$, which lifts to a subgroup
        $\hat K_{f}$ of $\tbG(\A_{f})$.
        We shall choose such a $K_{f}$ of the form $K^{\gp}K_{\gp}$.
        As before, we write $\tilde K_{f}$ for the preimage of $K_{f}$ in $\tbG(\A_{f})$.
        Furthermore let $\bar K_{f}$ be the profinite completion of $\Gamma(K_{f})$.
        The congruence kernel is defined to be the kernel of the canonical map $\bar K_{f}\to K_{f}$.
        This map factors through $\tilde K_{f}$, and the kernel of the map $\bar K_{f}\to \tilde K_{f}$
        is $\Cong_{0}$.
        We therefore have an extension of groups:
        \[
                1 \to \Cong_{0} \to \bar K_{f} \to \tilde K_{f} \to 1.
        \]
        Let $\cC(\bar K_{f})$ be the vector space of continuous functions from $\bar K_{f}$ to $E$.
        We shall regard $\cC(\bar K_{f})$ as a $\Gamma\times \Cong_{0}$-module.
        By Theorem \ref{thm:standardfacts} there are two spectral sequences:
        \begin{align*}
                H^{r}(\Gamma,H^{s}_{\cts}(\Cong_{0},\cC(\bar K_{f})))
                &\Rightarrow
                H^{r+s}_{\cts}(\Gamma\times \Cong_{0},\cC(\bar K_{f})),\\
                H^{r}_{\cts}(\Cong_{0},H^{s}(\Gamma,\cC(\bar K_{f})))
                &\Rightarrow
                H^{r+s}_{\cts}(\Gamma\times \Cong_{0},\cC(\bar K_{f})).
        \end{align*}
        (Here we are regarding $\Gamma \times \Cong_{0}$ as a topological group, in which
        the profinite group $\Cong_{0}$ is an open subgroup.)
        Again by Theorem \ref{thm:standardfacts}, the first of these spectral sequences degenerates, as we have
        \[
                H^{s}_{\cts}(\Cong_{0},\cC(\bar K_{\gp}))
                =
                \begin{cases}
                        \cC(\tilde K_{\gp}) & \hbox{if }s=0,\\
                        0 & s>0.
                \end{cases}
        \]
        From this, it follows that
        \[
                H^{r}_{\cts}(\Gamma\times \Cong_{0},\cC(\bar K_{f}))
                =
                H^{r}(\Gamma,\cC(\tilde K_{f}))
        \]
        From the second spectral sequence, we have an inflation-restriction sequence:
        \[
                0 \to H^{1}_{\cts}(\Cong_{0},E) \to H^{1}(\Gamma, \cC(\tilde K_{f}))
                \to H^{1}(\Gamma,\cC(\bar K_{f}))^{\Cong_{0}}.
        \]
        By Theorems 5 and 6 of \cite{hill10}, we have
        \[
                H^{\bullet}(\Gamma,\cC(\bar K_{f}))
                =
                E \otimes_{\Z_{p}} \limp{t} \limd{\Upsilon} H^{\bullet}(\Upsilon,\Z/p^{t}),
        \]
        where $\Upsilon$ runs through the subgroups of finite index in $\Gamma$.
        In particular, we have
        $H^{1}(\Gamma,\cC(\bar K_{f}))=0$, and so
        \begin{equation}
                H^{1}_{\cts}(\Cong_{0},E) \cong H^{1}(\Gamma, \cC(\tilde K_{f})).
                \label{step1}
        \end{equation}
        Next, we consider $\cC(\tilde K_{f})$ as a $\Gamma \times \hat K^{\gp}$-module.
        As before, we have
        \[
                H^{s}_{\cts}(\hat K^{\gp},\cC(\tilde K_{f}))
                =
                \begin{cases}
                        \cC(\tilde K_{\gp}) & s=0,\\
                        0 & s>0.
                \end{cases}
        \]
        This implies
        \[
                H^{\bullet}(\Gamma \times \hat K^{\gp}, \cC(\tilde K_{f}))
                =
                H^{\bullet}(\Gamma , \cC(\tilde K_{\gp}))
                =
                \tH^{\bullet}(\hat K^{\gp},E).
        \]
        Hence there is a spectral sequence
        \[
                H^{r}_{\cts}(\hat K^{\gp},H^{s}(\Gamma, \cC(\tilde K_{f})))
                \Rightarrow
                \tH^{r+s}(\hat K^{\gp},E).
        \]
        The sequence of low degree terms gives:
        \[
                H^{1}_{\cts}(K^{\gp},E)
                \to \tH^{1}(\hat K^{\gp},E)
                \to H^{1}(\Gamma, \cC(\tilde K_{f}))^{\hat K^{\gp}}
                \to H^{2}_{\cts}(K^{\gp},E).
        \]
        The theorem will be proved by \eqref{step1} when we have shown that the middle map in the sequence above 
        is an isomorphism.
        We shall show that the first and last terms above are zero.
        As $E$ is a field of characteristic zero, it follows that for any normal subgroup $U$ of finite index in $K^{\gp}$,
         we have $H^{\bullet}_{\cts}(K^{\gp},E)=H^{\bullet}_{\cts}(U,E)^{K_{\gp}}$.
        It is therefore sufficient to prove that $H^{1}_{\cts}(U,E)$ and $H^{2}_{\cts}(U,E)$ are trivial for a suitable
        subgroup $U$ of finite index in $K^{\gp}$.
        Let $S$ be finite set of finite places of $k$, distinct from $\gp$, such that for every prime $\gq$ outside
        $S\cup\{\gp\}\cup\infty$, the subgroup $K_{\gq}=K^{\gp}\cap G(k_{\gq})$ is a
        hyperspecial maximal compact subgroup, and is perfect.
        For primes $\gq$ in $S$, we choose a subgroup $K_{\gq}$ contained in $K^{\gp}\cap \bG(k_{\gq})$.
        We shall take $U$ to be the subgroup $\prod_{\gq\ne \gp} K_{\gq}$.
        This can be written as $U=U^{S}\oplus U_{S}$, where $U^{S}=\prod_{\gp\ne \gq\notin S} K_{\gq}$
        and $U_{S}=\prod_{\gq\in S}K_{\gq}$.
        Since $\bG$ is semisimple, it follows that for every prime $\gq$, the commutator subgroup $[K_{\gq},K_{\gq}]$
        has finite index in $K_{\gq}$, and hence $\Hom(K_{\gq},E)=0$.
        We therefore have $H^{1}_{\cts}(U,E)=0$.
        
        To calculate $H^{2}_{\cts}(U^{S},E)$, recall the short exact sequence
of Theorem \ref{thm:standardfacts}:
        \[
                0 \to \sideset{}{^{(1)}}\varprojlim_t H^{1}(U^{S},
\Z/p^{t})
                \to H^{2}_{\cts}(U^{S},\Z_{p})
                \to \limp{t} H^{2}(U^{S}, \Z/p^{t})
                \to 0.
        \]
        By the same argument as above, the groups $H^{1}(U^{S}, \Z/p^{t})$ are zero,
         and so the first term in this short exact sequence is zero.
        We have from \cite[\S2.2, Cor. 1]{serre73b}
        \[
                H^{\bullet}(U^{S}, \Z/p^{t})
                =
                \limd{T \hbox{ finite}}
                H^{\bullet}(K_{T},\Z/p^{t}),
        \]
        where $T$ runs over the finite sets of places of $k$ which do not intersect $S\cup\{\gp\}\cup\infty$
        and $K_{T}= \prod_{\gq\in T}K_{\gq}$.
        In particular, since $K_{T}$ is a perfect group it has a universal topological central extension, whose kernel
        we shall denote $\pi_{1}(K_{T})$.
        It follows that
        \[
                H^{2}(U^{S}, \Z/p^{t})
                =
                \limd{T \hbox{ finite}}
                \Hom(\pi_{1}(K_{T}),\Z/p^{t})
                =
                \Hom\left(\Pi,\Z/p^{t}\right),
        \]
        where
        \[
                \Pi = \prod_{\gq \notin S\cup\{\gp\}\cup\infty} \pi_{1}(K_{\gq}).
        \]
        The group $\Pi$ is a product of finite groups, and so we have
        \[
                \limp{t} \Hom\left(\Pi,\Z/p^{t}\right) = 0.
        \]
        This shows that $H^{2}_{\cts}(U^{S},\Z_{p})=0$, and in particular
        $H^{2}_{\cts}(U^{S},E)=0$.

        Next let $\gq$ be a prime in $S$ which does not lie above $p$.
        In this case there is an $E$-valued Haar measure on $K_{\gq}$,
        so it follows that $H^{r}(K_{\gq},E)=0$ for all $r>0$ and in particular when $r=2$.
        
        Finally, suppose $\gq$ is a prime in $S$ which lies above $p$.
        In this case there is an isomorphism \cite[Theorem 2.4.10 of Chapter V]{lazard65}
        \[
                H^{\bullet}_{\cts}(K_{\gq},E)
                =
                \left( H^{\bullet}_{\Lie}(\gg,\Qp) \otimes E\right)^{K_{\gq}},
        \]
        where the Lie algebra $\gg$ of $K_{\gq}$ is regarded as a Lie algebra over $\Qp$.
        By Whitehead's Second Lemma \cite{weibel}
         we have $H^{2}_{\Lie}(\gg,\Qp)=0$, and so in this case
        we also have $H^{2}_{\cts}(K_{\gq},E)=0$.
        
        As a consequence, we deduce that $H^{2}_{\cts}(U,E)=0$, which finishes the proof of the theorem.
\end{proof}

In particular, if the congruence kernel is finite then $\tH^{1}=0$.
As a result, we have:

\begin{theorem}
	For any metaplectic group the edge map criterion holds in dimension $0$.
 	If $\bG$ is semi-simple, simply connected and has positive real rank,
	and $\varepsilon$ is non-trivial then the edge map criterion holds in dimension $1$.
        If in addition $\bG$ has finite congruence kernel, then the edge map criterion holds in dimension 2.
\end{theorem}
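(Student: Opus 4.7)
The plan is to exploit the spectral sequence of Corollary \ref{spectral-sequence},
\[
	\Ext^{r}_{\gg}(W', \tH^{s}_{\varepsilon,\la}(\hat K^{\gp}, E)) \Rightarrow H^{r+s}_{\cl,\varepsilon}(\hat K^{\gp}, W),
\]
together with the explicit computations of $\tH^{0}_{\varepsilon}$ and $\tH^{1}$ furnished by the two preceding theorems. Recall that in a first-quadrant spectral sequence, the edge map $E_{2}^{0,n} \to H^{n}$ is an isomorphism provided that every $E_{2}^{r,s}$ with $r > 0$ and either $r+s = n$ (so that $E_{\infty}^{r,n-r} = 0$ for $r > 0$) or $s = n-r+1$ with $r \geq 2$ (so that the differentials leaving the $r = 0$ column are forced to vanish) is zero. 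My strategy is thus to reduce the edge map criterion in each dimension to the vanishing of certain Ext groups against $\tH^{0}_{\varepsilon,\la}$ and $\tH^{1}_{\varepsilon,\la}$, and then invoke the explicit descriptions of these spaces.

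For $n = 0$ there is nothing to check, as $\tH^{s} = 0$ for $s < 0$. For $n = 1$, the required vanishings are $\Ext^{r}_{\gg}(W', \tH^{0}_{\varepsilon,\la}) = 0$ for $r = 1$ and $r = 2$. Under the hypotheses that $\bG$ is semi-simple, simply connected, of positive real rank and that $\varepsilon$ is non-trivial, the preceding theorem gives $\tH^{0}_{\varepsilon}(\hat K^{\gp}, E) = 0$ outright, and passage to locally analytic vectors preserves vanishing, so the required Ext groups are all zero.

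For $n = 2$ I additionally need $\Ext^{r}_{\gg}(W', \tH^{1}_{\varepsilon,\la}) = 0$ for $r = 1, 2$, together with $\Ext^{3}_{\gg}(W', \tH^{0}_{\varepsilon,\la}) = 0$; the latter is handled exactly as in the $n = 1$ case. For $\tH^{1}_{\varepsilon}$, the preceding theorem identifies the full space $\tH^{1}(\hat K^{\gp}, E)$ with $\Hom_{\cts, \hat K^{\gp}}(\Cong_{0}, E)$. When the congruence kernel is finite, so is $\Cong_{0}$, and since $E$ has characteristic zero every continuous homomorphism from a finite group to $E$ is trivial; hence $\tH^{1}(\hat K^{\gp}, E) = 0$ and in particular $\tH^{1}_{\varepsilon,\la} = 0$, which supplies the remaining vanishings.

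The real content is in the two preceding theorems that describe $\tH^{0}_{\varepsilon}$ and $\tH^{1}$; once those are taken as given, the argument here is pure spectral-sequence bookkeeping. The reason this strategy stops at $n = 2$, and the genuine obstacle to pushing further, is the absence of any comparably explicit description of $\tH^{s}_{\varepsilon}$ for $s \geq 2$, without which one cannot kill off the higher-degree contributions to the filtration on $H^{n}_{\cl,\varepsilon}$.
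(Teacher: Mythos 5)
Your proof is correct and follows essentially the same route as the paper: both arguments reduce the edge map criterion in dimensions $1$ and $2$ to the vanishing of $\tH^{0}_{\varepsilon}$ and $\tH^{1}$ respectively, which is furnished by the two preceding theorems, and then conclude via the spectral sequence of Corollary \ref{spectral-sequence}. The only difference is expository: you spell out the spectral-sequence bookkeeping explicitly (identifying exactly which $E_{2}^{r,s}$ terms must vanish for injectivity and for the vanishing of outgoing differentials), whereas the paper compresses this into one line.
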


\begin{proof}
	In dimension $0$ the edge map is clearly an isomorphism, since it is
	in the bottom left corner of the spectral sequence.
	If $\bG$ is semi-simple, simply connected and has positive real rank,
	and $\varepsilon$ is non-trivial then we've seen that $\tH^{0}_{\varepsilon}=0$,
	and so the edge map is an isomorphism in dimension $1$.
	When the congruence kernel is finite we also have $\tH^{1}_{\varepsilon}=0$,
	and so the edge map is an isomorphism in dimension $2$.
	
\end{proof}



\section{The $p$-adic metaplectic Jacquet functor}
\label{sect:jacquet}

The results of this section are of a local nature, so we shall alter our
notation.
We now suppose that
we have a connected reductive group $\cG$ defined over $\Qp$, and we write $\scG$ for
the group of $\Qp$-valued points.
We shall suppose also that we have a topological central extension
\[
        1 \to \mu\to \tilde \scG \stackrel{\pr}{\to} \scG \to 1,
\]
where $\mu$ is a finite abelian group.
There is a compact open subgroup $\scK$ of $\scG$ which lifts to a subgroup
$\hat\scK$ of $\tilde\scG$.

Let $\cP$ be a parabolic subgroup of $\cG$ defined over $\Qp$ with unipotent radical $\cN$, and
choose a Levi component $\cM$. We shall also write $\scP$, $\scM$ and $\scN$ for the
groups $\cP(\Qp)$, $\cM(\Qp)$ and $\cN(\Qp)$ respectively. We write $\tilde \scP$,
$\tilde \scM$ and $\tilde \scN$ for the preimages of $\scP$, $\scM$ and $\scN$ in $\tilde \scG$.

\begin{lemma}
        Let $\scN$, $\scM$ and $\scP$ be as above.
        \begin{itemize}
                \item[(a)]
                There is a unique subgroup $\hat\scN$ of $\tilde \scN$,
                such that $\hat\scN$ projects bijectively onto $\scN$.
                \item[(b)]
                The subgroup $\hat\scN$ is open (and hence closed) in $\tilde\scN$
                and normal in $\tilde\scP$.
                Furthermore we have $\tilde\scP=\tilde\scM\ltimes\hat\scN$.
                \item[(c)]
                \label{conjugate-lift}
                Let $\tau :\scN \to \hat \scN$ be the unique splitting of $\pr:\tilde \scN \to \scN$.
                For any $\tilde m \in \tilde \scM$ and any $n\in \scN$, we have
                \[
                        \tilde m^{-1} \tau(n) \tilde m
                        =
                        \tau(m^{-1} n m),
                \]
                where $m=\pr(\tilde m)$.
        \end{itemize}
%
\end{lemma}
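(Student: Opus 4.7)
The plan is to treat (a) first, from which (b) and (c) follow formally by exploiting the normality of $\scN$ in $\scP$ and the uniqueness statement in (a).

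For (a), both uniqueness and existence of $\hat\scN$ reduce to statements about continuous splittings of the central extension
\[
        1 \to \mu \to \tilde\scN \to \scN \to 1.
\]
Uniqueness is easy: two such subgroups give two continuous sections $\tau_{1},\tau_{2}:\scN\to\tilde\scN$, whose ratio is a continuous homomorphism $\scN\to\mu$. Since $\cN$ is a unipotent group over $\Qp$, the group $\scN$ is divisible (via the exponential map and the Baker--Campbell--Hausdorff formula, every element has a unique $n$-th root for each $n\ge 1$), so any continuous homomorphism into the finite group $\mu$ is trivial. For existence, I would argue by induction on the nilpotency length of $\scN$, using the fact that $\scN$ admits a central series whose successive quotients are vector groups $\Qp^{r}$. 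For each such vector group $V$, the continuous cohomology $H^{2}_{\cts}(V,\mu)$ vanishes (again, because $V$ is divisible and $\mu$ is finite); so by Hochschild--Serre applied to each stage, the extension $\tilde\scN\to\scN$ splits continuously. Alternatively one can simply quote Moore's splitting results for unipotent $p$-adic groups. The subgroup $\hat\scN$ is then defined as the image of the splitting; it is open in $\tilde\scN$ because $\tilde\scN\to\scN$ is a principal $\mu$-bundle with $\mu$ discrete, and hence closed since $\mu$ is finite. This is the main obstacle in the proof; everything else is formal manipulation.

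For (b), normality of $\hat\scN$ in $\tilde\scP$ follows immediately from uniqueness: for any $\tilde p\in\tilde\scP$ with image $p\in\scP$, the conjugate $\tilde p\hat\scN\tilde p^{-1}$ is again a subgroup of $\tilde\scN$ (since $\scN$ is normal in $\scP$) which projects bijectively onto $p\scN p^{-1}=\scN$, hence by uniqueness in (a) it equals $\hat\scN$. To show $\tilde\scP=\tilde\scM\ltimes\hat\scN$, I would use the semidirect decomposition $\scP=\scM\ltimes\scN$: given $\tilde p\in\tilde\scP$ with image $mn$, pick any lift $\tilde m\in\tilde\scM$ of $m$; then $\tilde p$ and $\tilde m\cdot\tau(n)$ project to the same element of $\scP$, so they differ by an element of $\mu\subset\tilde\scM$, which we absorb into $\tilde m$. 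This gives $\tilde\scP=\tilde\scM\cdot\hat\scN$. For triviality of the intersection, any element of $\tilde\scM\cap\hat\scN$ projects to $\scM\cap\scN=\{e\}$, hence lies in $\mu\cap\hat\scN=\{e\}$.

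For (c), the element $\tilde m^{-1}\tau(n)\tilde m$ lies in $\tilde m^{-1}\hat\scN\tilde m=\hat\scN$ by the normality just established, so it equals $\tau(n')$ for a unique $n'\in\scN$. Projecting both sides down to $\scG$ identifies $n'=m^{-1}nm$, which is the desired identity.
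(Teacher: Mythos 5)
Your proof is correct and follows essentially the same strategy as the paper's: existence of $\hat\scN$ via vanishing of $H^2$ with $\mu$-coefficients for a central series of $\scN$, uniqueness via divisibility of $\scN$, and then (b) and (c) as formal consequences of uniqueness. The only cosmetic difference is that you work with continuous cohomology and continuous sections throughout, whereas the paper first splits the sequence as an extension of abstract groups (discrete cohomology) and then invokes the analogous vanishing in continuous cohomology to conclude that the unique abstract section is automatically continuous; both arrangements are fine.
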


\begin{proof}
        For the moment we shall regard $\Qp$ as a discrete additive group.
        As such, $\Qp$ is uniquely divisible, and so we have $H^{\bullet}_{\group}(\Qp,\mu)=\mu$.
        By this we mean that $H^{0}_{\group}(\Qp,\mu)=\mu$ and $H^{n}_{\group}(\Qp,\mu)=0$ for $n>0$.
        Suppose we have a central extension of discrete groups
        \[
                1 \to \Qp \to \scN_{1} \to \scN_{2} \to 1.
        \]
        It follows by the Hochschild--Serre spectral sequence that
        $H^{\bullet}_{\group}(\scN_{1},\mu)=H^{\bullet}_{\group}(\scN_{2},\mu)$.
        The group $\scN$ may be constructed from a sequence of central extensions by $\Qp$
        of the trivial group.
        Therefore $H^{\bullet}_{\group}(\scN,\mu)=\mu$, and in particular $H^{2}_{\group}(\scN,\mu)=0$.
        This shows that the extension splits on $\scN$, and hence shows the existence of $\hat\scN$.
        
        For uniqueness, suppose that $\tau_{1},\tau_{2}:\scN \to \tilde \scN$ are two splittings.
        It follows that $n \mapsto \tau_{1}(n) \tau_{2}(n)^{-1}$ is a homomorphism from $\scN$ to $\mu$.
        Since $\scN$ is divisible, this homomorphism must be trivial, so $\tau_{1}=\tau_{2}$.
        
        To see that $\hat\scN$ is open and closed in $\tilde \scN$, we note that the same calculation as above
        proves for the continuous cohomology $\scN$ that $H^{\bullet}_{\cts}(\scN,\mu)=\mu$.
        In particular, the section $\tau:\scN \to \hat \scN$ is continuous (and hence a homeomorphism).
        
        As $\scN$ is normal in $\scP$, it follows that $\tilde \scN$ is normal in $\tilde \scP$.
        The uniqueness property of $\hat\scN$ shows that $\hat\scN$ is normal in $\tilde \scP$.
        
        Part (c) also follows from the uniqueness of $\hat\scN$.
\end{proof}

%
%
%
%
%

Now let $\scN_{0}$ be a compact open subgroup of $\scN$, and
let $\hat \scN_{0}=\tau (\scN_{0})$.
Following \S3.3 of \cite{emerton-jacquet1} we define two semigroups:
\[
        \scM^{+}
        =
        \{m \in \scM : m\scN_{0}m^{-1} \subset \scN_{0}\},
\]
and
\[
        \tilde \scM^{+}
        =
        \{\tilde m \in \tilde \scM : \tilde m\hat \scN_{0}\tilde m^{-1} \subset \hat \scN_{0}\}.
\]

\begin{lemma}
        \label{lem-M}
        With the notation described above, $\tilde \scM^{+}$ is the
        preimage of $\scM^{+}$ in $\tilde \scM$.
\end{lemma}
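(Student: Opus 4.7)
The plan is to derive the lemma directly from part (c) of the preceding lemma, which gives the conjugation formula $\tilde m^{-1} \tau(n) \tilde m = \tau(m^{-1} n m)$ for $m = \pr(\tilde m)$. Since $\hat\scN_0 = \tau(\scN_0)$ by definition, and $\tau : \scN \to \hat\scN$ is a bijection, conjugation by $\tilde m$ on $\hat\scN_0$ corresponds exactly to conjugation by $m$ on $\scN_0$ under $\tau$.

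First I would spell out the ``only if'' direction. Suppose $\tilde m \in \tilde\scM^+$, and let $m = \pr(\tilde m)$. For any $n \in \scN_0$, applying part (c) with $\tilde m$ replaced by $\tilde m^{-1}$ gives $\tilde m\, \tau(n)\, \tilde m^{-1} = \tau(m n m^{-1})$. By assumption this lies in $\hat\scN_0 = \tau(\scN_0)$, and since $\tau$ is injective we conclude $m n m^{-1} \in \scN_0$. Thus $m \in \scM^+$.

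Next I would handle the ``if'' direction. Suppose $m = \pr(\tilde m) \in \scM^+$. Any element of $\hat\scN_0$ is of the form $\tau(n)$ with $n \in \scN_0$, and by part (c) (again applied to $\tilde m^{-1}$) we have $\tilde m\, \tau(n)\, \tilde m^{-1} = \tau(m n m^{-1})$. Since $m n m^{-1} \in \scN_0$ by hypothesis, the right-hand side lies in $\tau(\scN_0) = \hat\scN_0$. Hence $\tilde m \hat\scN_0 \tilde m^{-1} \subset \hat\scN_0$, i.e.\ $\tilde m \in \tilde\scM^+$.

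There is no real obstacle here: the work has all been done in the previous lemma, and this statement is essentially a direct translation. The only point worth noting is that the argument is symmetric in $\tilde m$ and $\tilde m^{-1}$, so the conjugation formula must be invoked in the form that matches the semigroup definitions; but that is just bookkeeping. No use of the specific metaplectic structure beyond the existence and uniqueness of the splitting $\tau$ is needed.
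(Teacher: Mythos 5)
Your proof is correct and takes essentially the same route as the paper: the paper simply cites the normality of $\hat\scN$ in $\tilde\scP$ (part (b) of the preceding lemma), while you make the same fact explicit via the conjugation identity of part (c). The level of detail differs, but the underlying argument is identical.
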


\begin{proof}
        This is immediate from the fact that $\hat \scN$ is a normal subgroup of $\tilde \scP$.
\end{proof}

Let $Z_{\scM}$ and $Z_{\tilde \scM}$ be the centres of $\scM$ and $\tilde \scM$ respectively.

\begin{lemma}
        \label{finite-index-centre}
        The image of $Z_{\tilde \scM}$ in $\scG$ is a subgroup of $Z_{\scM}$ of finite index.
\end{lemma}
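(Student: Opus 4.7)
My plan is to construct a homomorphism $\phi : Z_{\scM} \to \Hom_{\cts}(\scM, \mu)$ whose kernel is exactly $\pr(Z_{\tilde \scM})$, and then to show that the target group is finite.

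For the construction, fix $z \in Z_{\scM}$ and choose any lift $\tilde z \in \tilde \scM$. For every $\tilde m \in \tilde \scM$, the commutator
\[
c_{\tilde z}(\tilde m) := \tilde z \tilde m \tilde z^{-1} \tilde m^{-1}
\]
projects to $[z,m] = 1$ in $\scM$ and so lies in $\mu$. Using that $\mu$ is central in $\tilde \scM$, a direct calculation shows that $c_{\tilde z}$ is a group homomorphism $\tilde \scM \to \mu$, that it is trivial on $\mu$ itself, and that it is continuous (the target being discrete); it therefore descends to a continuous character $c_{\tilde z} : \scM \to \mu$. Replacing $\tilde z$ by $\zeta \tilde z$ for $\zeta \in \mu$ does not change $c_{\tilde z}$, so the assignment $z \mapsto c_{\tilde z}$ yields a well-defined map $\phi : Z_{\scM} \to \Hom_{\cts}(\scM, \mu)$, which is a group homomorphism by another short calculation using centrality of $\mu$. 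By construction $\phi(z) = 1$ exactly when some, equivalently any, lift of $z$ commutes with all of $\tilde \scM$, i.e.\ when $z \in \pr(Z_{\tilde \scM})$, so $\phi$ induces an injection $Z_{\scM}/\pr(Z_{\tilde \scM}) \hookrightarrow \Hom_{\cts}(\scM, \mu)$.

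It remains to prove that $\Hom_{\cts}(\scM, \mu)$ is finite. Since $\mu$ has finite exponent $N$, every such character factors through the maximal exponent-$N$ topological abelian quotient of $\scM = \cM(\Qp)$, and this I would bound using the structure of connected reductive groups over $\Qp$. Concretely, there is a central isogeny $\cM^{\mathrm{sc}} \times Z_{\cM}^{0} \to \cM$, where $\cM^{\mathrm{sc}}$ is the simply connected cover of the derived subgroup and $Z_{\cM}^{0}$ is the connected centre; on $\Qp$-points its cokernel is finite by finiteness of Galois cohomology of the finite kernel. The $\Qp$-points of the torus $Z_{\cM}^{0}$ admit only finitely many continuous characters into the finite group $\mu$, since they decompose topologically as a finitely generated abelian group times a compact $p$-adic Lie group. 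For $\cM^{\mathrm{sc}}$, the Kneser--Tits theorem over $\Qp$ implies that each $\Qp$-isotropic almost simple factor has perfect $\Qp$-points and so contributes no characters at all, while the anisotropic factors are compact and again admit only finitely many characters to $\mu$. Assembling these pieces finishes step 2; this piecing-together is the main technical obstacle, whereas step 1 is purely formal and depends only on the centrality of $\mu$ in $\tilde \scM$.
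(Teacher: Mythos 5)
Your step 1 — the construction of a homomorphism
\[
\phi : Z_{\scM} \to \Hom_{\cts}(\scM,\mu), \qquad \phi(z)(m) = [\tilde z, \tilde m],
\]
with $\ker\phi = \pr(Z_{\tilde\scM})$ — is exactly the commutator pairing the paper uses, and your verification of it is correct. Where you diverge is in step 2. You set out to prove that the entire target group $\Hom_{\cts}(\scM,\mu)$ is finite, which pulls in quite a bit of machinery: the central isogeny $\cM^{\mathrm{sc}}\times Z_\cM^0 \to \cM$, finiteness of $H^1(\Qp,F)$ for the kernel $F$, the Kneser--Tits theorem over local fields for the isotropic factors of $\cM^{\mathrm{sc}}$, compactness of the anisotropic factors, and topological finite generation of the pieces. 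This can be made to work, but it is far heavier than the paper's route. The paper instead exploits a feature of $\phi$ that you leave unused: since $\mu$ has order $|\mu|$, the target $\Hom_{\cts}(\scM,\mu)$ has exponent dividing $|\mu|$, so the homomorphism $\phi$ automatically kills $Z_{\scM}^{|\mu|}$. Hence $\pr(Z_{\tilde\scM}) = \ker\phi \supseteq Z_{\scM}^{|\mu|}$, and the only structural input needed is that $Z_{\scM}$ is topologically finitely generated (an abelian group), so that $Z_{\scM}/Z_{\scM}^{|\mu|}$ is finite. In other words, one does not need the image of $\phi$ to exhaust a finite group; one only needs the domain modulo the kernel to be finite, and the exponent observation delivers this with no reductive-group structure theory at all. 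Your proof buys an upper bound for the index in terms of $|\Hom_{\cts}(\scM,\mu)|$, but at considerably greater cost than the argument actually requires.
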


\begin{proof}
        Let $\tilde z \in \pr^{-1}(Z_{\scM})$ and $\tilde m\in \tilde \scM$. Furthermore let $z=\pr(\tilde z)$
         and $m=\pr(\tilde m)$.
        Since $Z_{\scM}$ is central in $\scM$, we have $[z,m]=1$, and therefore $[\tilde z,\tilde m]\in \mu$.
        Since our extension is central, it follows that the commutator $[\tilde z,\tilde m]$ depends only
        on $z$ and $m$. Furthermore, one easily checks that the map $Z_{\scM}\times \scM \to \mu$
        given by
        \[
                (z,m) \mapsto [\tilde z,\tilde m]
        \]
        is bimultiplicative.
        In particular, if $z$ is an $|\mu|$-th power, then $[\tilde z,\tilde m]=1$ for all $\tilde m$.
        This shows that the projection of $Z_{\tilde \scM}$ contains $Z_{\scM}^{|\mu|}$.
        Since $Z_{\scM}$ is topologically finitely generated, it follows that  $Z_{\scM}^{|\mu|}$
        has finite index in $Z_{\scM}$.
        This proves the lemma.
\end{proof}

\begin{remark}
        The projection of $Z_{\tilde \scM}$ is typically not equal to $Z_{\scM}$.
        For example, suppose $\scG=\GL_{2}(\Qp)$.
        Assume $\Qp$ contains an $m$-th root of unity.
        Then Kubota has defined an $m$-fold cover $\tilde \scG$ of $\scG$.
        We obviously have $Z_{\scG}=\Qp^{\times}$.
        On the other hand, the image in $Z_{\tilde\scG}$ of $Z_{\scG}$ is
        \[
                \{x\in \Qp^{\times}: \forall y\in \Qp^{\times}, (x,y)_{\gp,m}=(y,x)_{\gp,m}\}.
        \]
        This is the set of $x$ such that $x^{2}$ is an $m$-th power in $\Qp^{\times}$.
\end{remark}

\begin{lemma}
        The group $\tilde \scM$ is generated as a semigroup by $\tilde \scM^{+}$ and $Z_{\tilde \scM}$.
\end{lemma}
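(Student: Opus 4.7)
The plan is to reduce the statement directly to its non-metaplectic counterpart, which is the analogous result for $\scM$ proved in \cite{emerton-jacquet1}. The classical fact says that $\scM$ is generated as a semigroup by $\scM^{+}$ and $Z_{\scM}$; concretely, the proof there proceeds by producing a strictly positive central element $z_{0}\in Z_{\scM}$, meaning $z_{0}\scN_{0}z_{0}^{-1}\subsetneq\scN_{0}$ with $\bigcap_{n\ge 0}z_{0}^{n}\scN_{0}z_{0}^{-n}=\{1\}$, so that any $m\in\scM$ satisfies $z_{0}^{n}m\in\scM^{+}$ for all sufficiently large $n$ (because $m\scN_{0}m^{-1}$ is compact in $\scN$ and is eventually swept into $\scN_{0}$ under repeated conjugation by $z_{0}$). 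Then $m=z_{0}^{-n}(z_{0}^{n}m)\in Z_{\scM}\cdot\scM^{+}$.

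To adapt this to the metaplectic setting, the key point is to arrange that the element $z_{0}$ can be lifted inside $Z_{\tilde\scM}$. First I would invoke Lemma~\ref{finite-index-centre}, which says that $\pr(Z_{\tilde\scM})$ has finite index in $Z_{\scM}$; let $N$ denote the exponent of the finite quotient $Z_{\scM}/\pr(Z_{\tilde\scM})$. Replacing $z_{0}$ by $z_{0}^{N}$, I obtain an element that is still strictly positive (any positive integer power of a strictly positive element remains strictly positive) and that now lies in $\pr(Z_{\tilde\scM})$. Fix a lift $\tilde z_{0}\in Z_{\tilde\scM}$ of this modified $z_{0}$.

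Now take an arbitrary $\tilde m\in\tilde\scM$ and set $m=\pr(\tilde m)$. By the classical argument recalled above, there exists $n\ge 0$ with $z_{0}^{n}m\in\scM^{+}$. The element $\tilde z_{0}^{n}\tilde m\in\tilde\scM$ projects to $z_{0}^{n}m$, so by Lemma~\ref{lem-M} it lies in $\tilde\scM^{+}$. Writing
\[
        \tilde m = \tilde z_{0}^{-n}\cdot(\tilde z_{0}^{n}\tilde m),
\]
with $\tilde z_{0}^{-n}\in Z_{\tilde\scM}$ and $\tilde z_{0}^{n}\tilde m\in\tilde\scM^{+}$, exhibits $\tilde m$ as a product of the required form, completing the proof.

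There is no real obstacle here beyond correctly invoking the two ingredients (the classical result of Emerton and Lemma~\ref{finite-index-centre}); the only subtlety is remembering that one needs the lift of $z_{0}$ to be genuinely central in $\tilde\scM$, not merely a lift of a central element, which is exactly why the finite-index statement of Lemma~\ref{finite-index-centre} is indispensable.
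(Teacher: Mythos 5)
Your proof is correct and takes essentially the same route as the paper: project $\tilde m$ to $\scM$, use Lemma~\ref{finite-index-centre} together with the classical argument of Emerton (Lemma~3.3.1 of \cite{emerton-jacquet1}) to produce a strictly positive element $z$ lying in $\pr(Z_{\tilde\scM})$ with $zm\in\scM^{+}$, lift it to $\tilde z\in Z_{\tilde\scM}$, and apply Lemma~\ref{lem-M} to conclude $\tilde z\tilde m\in\tilde\scM^{+}$. The only difference is that you unpack the mechanism behind Emerton's lemma (the strictly positive element and the sweeping-in of the compact set $m\scN_{0}m^{-1}$) rather than citing it as a black box.
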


\begin{proof}
        Choose any $\tilde m\in \tilde \scM$ and let $m=\pr(\tilde m)$.
        Since $\pr(Z_{\tilde \scM})$ has finite index in $Z_{\scM}$, Lemma 3.3.1 of \cite{emerton-jacquet1} shows
        that there is an element $z \in \pr(Z_{\tilde \scM})$,
        such that $zm\scN_{0} m^{-1}z^{-1}\subset \scN_{0}$.
        Hence $zm\in \scM^{+}$.
        If $\tilde z$ is any preimage of $z$ in $Z_{\tilde \scM}$, then by Lemma \ref{lem-M} we have
        $\tilde z\tilde m\in \tilde \scM^{+}$.
\end{proof}

\subsection{Definition of the Jacquet functor}

We shall consider representations of the group $\tilde \scG$ over a coefficient
field $E$ containing $\Qp$.
Let $(V,\pi)$ be a locally analytic representation of $\tilde \scP$.
In this section we shall define the Jacquet functor $J_{\cP}(V)$.

Fix a compact open subgroup $\scP_{0}$ of $\scP$ and let $\tilde \scP_{0}$ be the preimage
of $\scP_{0}$ in $\tilde \scG$.
Let $\hat \scN_{0}$ and $\tilde \scM_{0}$ be the intersections of $\tilde \scP_{0}$ with $\tilde \scM$ and $\hat \scN$
respectively.
We also define two semigroups:
\[
        \tilde \scM^{+}
        =
        \{m \in \tilde \scM : m \hat \scN_{0}m^{-1} \subset \hat \scN_{0}\},
        \quad
        \tilde Z^{+}= \tilde \scM^{+}\cap Z_{\tilde \scM}.
\]
Recall that the subspace $V^{\hat \scN_{0}}$ has a natural action
of $\tilde \scM^{+}$. This action is defined as follows. For $m\in \tilde \scM^{+}$ and $v\in V^{\hat \scN_{0}}$,
the vector $\pi(m)v$ will be in $V^{m \hat \scN_{0}m^{-1}}$, and we define
\[
        \left(\pi_{\hat \scN_{0}}(m)\right)(v)
        =
        \int_{\hat \scN_{0}} \pi(n m) v \ dn,
\]
where the Haar measure on $\hat \scN_{0}$ is normalized to have total measure $1$.
For elements $m\in \tilde \scM_{0}$ we have $\pi_{\hat \scN_{0}}(m)v=\pi(m)v$,
and so the action $\pi_{\hat \scN_{0}}$ of $\tilde \scM^{+}$ on $V^{\hat \scN_{0}}$ is locally analytic.

Let $\widehat{Z}_{\tilde \scM}$ be the rigid analytic space of locally analytic characters of $Z_{\tilde \scM}$,
and write $\cC^{\an}(\widehat{Z}_{\tilde \scM},E)$ for the ring of $E$-valued analytic functions on
$\widehat{Z}_{\tilde \scM}$.

\begin{definition}
        If $V$ is a locally analytic representation of $\tilde \scM^{+}$ then we define
        the \emph{finite slope} part of $V$ by
        \[
                V_{\fs}
                =
                \cL_{b,\tilde Z^{+}}(\cC^{\an}(\widehat{Z}_{\tilde \scM},E),V).
        \]
\end{definition}

There is a natural map $Z_{\tilde \scM} \to \cC^{\an}(\widehat{Z}_{\tilde \scM},E)$, which makes
$V_{\fs}$ into a $Z_{\tilde \scM}$-module.
Furthermore, the action of $\tilde \scM^{+}$ on $V$ gives rise to an action of $\tilde \scM^{+}$ on $V_{\fs}$.
The actions of $Z_{\tilde \scM}$ and $\tilde \scM^{+}$ coincide on their intersection $Z_{\tilde \scM}^{+}$,
and so generate an action of the group $\tilde \scM = \tilde \scM^{+} Z_{\tilde \scM}$ on $V_{\fs}$.

\begin{definition}
        If $V$ is a locally analytic representation of $\tilde \scP$, then we define the Jacquet functor of $V$
        by
        \[
                J_{\cP}(V)
                =
                \left(V^{\hat \scN_{0}}\right)_{\fs}.
        \]
\end{definition}

\subsection{The Jacquet functor preserves essential admissibility}

\begin{theorem}
        \label{thm:essential-admiss}
        If $V$ is an essentially admissible locally analytic representation of $\tilde \scG$
        then $J_{\cP}(V)$ is an essentially admissible locally analytic representation
        of $\tilde \scM$.
\end{theorem}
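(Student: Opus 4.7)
The plan is to imitate the proof of the analogous result for algebraic reductive groups (Theorem 4.2.32 of \cite{emerton-jacquet1}) and verify that nothing in Emerton's argument breaks down in the metaplectic setting. The crucial structural inputs are already in place above: $\hat\scN$ projects isomorphically onto $\scN$ via the canonical splitting $\tau$, and by part (c) of the first lemma of this section, the conjugation action of $\tilde\scM$ on $\hat\scN$ is intertwined via $\tau$ with the conjugation action of $\scM$ on $\scN$. Consequently, all the analytic and group-theoretic book-keeping concerning $\hat\scN_0$ and its normalizing action by $\tilde\scM^+$ is formally identical to Emerton's treatment of $\scN_0$ and $\scM^+$.

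I would proceed in two stages. First, I would show that $V^{\hat\scN_0}$ carries the structure of a locally analytic representation of any compact open subgroup $\tilde\scM_0 \subset \tilde\scM$, and that this representation is essentially admissible when viewed as a $\tilde\scM_0$-representation. This is done by choosing a cofinal sequence of good analytic open subgroups $\hat\scH_n$ of $\tilde\scG$ compatible with a Levi decomposition $\hat\scH_n = (\hat\scH_n \cap \tilde\scM)(\hat\scH_n \cap \hat\scN)(\hat\scH_n \cap \hat{\bar\scN})$ for an opposite parabolic $\hat{\bar\scN}$, taking $\hat\scN_0$-invariants at each analytic level, and identifying the resulting dual modules with coadmissible modules over the appropriate Fr\'echet--Stein algebras on $\tilde\scM_0$. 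The identification here goes through exactly as in Emerton's argument because the conjugation action on $\hat\scN_0$ is the lift of the usual one.

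Second, I would analyse the finite-slope construction $W \mapsto W_{\fs} = \cL_{b,\tilde Z^+}(\cC^{\an}(\widehat Z_{\tilde\scM},E),W)$ applied to $W = V^{\hat\scN_0}$. The essential admissibility of $V_{\fs}$ as a $\tilde\scM$-representation requires three ingredients, each of which is available: (a) $Z_{\tilde\scM}$ is topologically finitely generated, since its image in $Z_{\scM}$ has finite index by Lemma \ref{finite-index-centre}, so $\widehat Z_{\tilde\scM}$ is a well-defined rigid analytic space; (b) $\tilde\scM$ is generated as a semigroup by $\tilde\scM^+$ together with $Z_{\tilde\scM}$, so the $\tilde\scM^+$-action on $V^{\hat\scN_0}$ extends canonically to a $\tilde\scM$-action on $(V^{\hat\scN_0})_{\fs}$; and (c) the $Z_{\tilde\scM}$-action on the finite slope part extends by construction to a separately continuous $\cC^{\an}(\widehat Z_{\tilde\scM})$-action, which furnishes the first condition in the definition of essential admissibility for free.

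The main obstacle will be verifying the second essential admissibility condition, namely the coadmissibility of $J_{\cP}(V)'$ as a module over the Fr\'echet--Stein algebra $\cC^{\an}(\widehat Z_{\tilde\scM}) \mathbin{\hat\otimes} \cD^{\la}(\tilde\scM_0)$. In Emerton's treatment this is the most delicate step, relying on expressing the finite-slope part as an inverse limit whose transition maps are given by Hecke operators at specific elements of $\tilde Z^+$, and on the compactness of these Hecke operators on the relevant spaces of analytic vectors. The transfer of this argument to the metaplectic case goes through because, via Lemma \ref{lem-M} and the uniqueness of $\hat\scN$, the Hecke operator $\pi_{\hat\scN_0}(\tilde m)$ for $\tilde m \in \tilde Z^+$ has the same norm and compactness properties as the corresponding operator $\pi_{\scN_0}(\pr(\tilde m))$ in Emerton's setting, so his compactness estimates apply verbatim. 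Rather than reproducing Emerton's intricate computations, I would indicate at each step which metaplectic lemma replaces the corresponding statement for $\scG$ and leave the reader to transcribe.
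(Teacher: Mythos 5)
Your proposal is correct in spirit and follows essentially the same route as the paper: both reduce the theorem to Emerton's proof for algebraic groups and verify that the necessary structural facts about the metaplectic group hold. The paper is however considerably more targeted: rather than walking through the shape of Emerton's argument, it isolates a single technical input --- Emerton's Proposition 4.1.6, concerning the existence of a cofinal sequence of good analytic open subgroups admitting rigid analytic Iwahori decompositions with properties (i)--(vi) --- and proves that this, and only this, needs a metaplectic analogue. The construction is explicit: choose $\scH_0 \subset \scK$ (possible since Emerton's proof allows $\scH_0$ arbitrarily small), take Emerton's sequence $\scH_n$, and lift each $\scH_n$ to a subgroup $\hat\scH_n \subset \hat\scK$ via the chosen lift; properties (iv)--(vi) for $\hat\scH_n$ then follow from the corresponding properties for $\scH_n$ together with the conjugation-compatibility of $\tau$ (your ``part (c) of the first lemma'') and Lemma \ref{finite-index-centre} (which lets you replace $z$ by a power $z^r \in \pr(Z_{\tilde\scM})$ in property (vi)). Your sketch correctly identifies these same structural ingredients, but leaves the construction of the $\hat\scH_n$ implicit; the paper's version makes this precise and thereby licenses the ``word-for-word'' claim for the rest of Emerton's proof in a way that a diffuse outline does not.
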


Theorem \ref{thm:essential-admiss} was proved by Emerton in the algebraic case. The proof
is rather long, and most of it carries through word for word to the metaplectic case.
The only difference is a technical lemma on the structure of the group $\scG$.
We quote this lemma below, and we prove its generalization to the metaplectic case.

Before stating these lemmata, we must recall the definition of a rigid analytic Iwahori decomposition,
and generalize this concept to the metaplectic case.
Let $\cP$ and $\bar\cP$ be parabolic subgroups of $\cG$ defined over $\Qp$,
so that $\cM=\cP \cap \bar\cP$ is a Levi component of both $\cP$ and $\bar\cP$.
We shall write $\cN$ and $\bar\cN$ for the unipotent radicals of $\cP$ and $\bar\cP$ respectively.
We shall write $\gn$, $\bar\gn$ and $\gm$ for the Lie algebras of $\cN$, $\bar\cN$ and $\cM$
over $\Qp$, and $\scN$, $\bar \scN$ and $\scM$ for their groups of $\Qp$-valued points.
Suppose $\scH$ is a good analytic open subgroup of $\scG$, which arises from the Lie
sublattice $\gh$ of $\gg$, with underlying rigid analytic group $\bbH$.
Furthermore define
\[
        \scM_{0}= \scM \cap \scH,
        \quad
        \scN_{0}= \scN \cap \scH,
        \quad
        \bar \scN_{0}= \bar \scN \cap \scH.
\]
Finally, we let $\M_{0}$, $\N_{0}$ and $\bar\N_{0}$ denote the rigid analytic closures of
$\scM_{0}$, $\scN_{0}$ and $\bar \scN_{0}$ in $\bbH$.
The subgroup $\scH$ is said to admit a \emph{rigid analytic Iwahori decomposition}
if the following conditions are satisfied:
\begin{enumerate}
        \item
        The groups $\scM_{0}$, $\scN_{0}$ and $\bar \scN_{0}$ are good analytic open subgroups of
        $\scM$, $\scN$ and $\bar \scN$ corresponding to the Lie sublattices $\gm\cap\gh$, $\gn \cap \gh$
        and $\bar\gn \cap \gh$, and with underlying rigid analytic groups $\M_{0}$, $\N_{0}$ and
        $\bar\N_{0}$.
        \item
        The rigid analytic map
        \[
                \bar \N_{0}\times \M_{0}\times \N_{0} \to \bbH
        \]
        given by multiplication in $\bbH$ is an isomorphism of rigid analytic spaces.
\end{enumerate}
We next give a corresponding definition for subgroups of $\tilde\scG$.
Note that we have a compact open subgroup $\scK$ of $\scG$, which lifts to a subgroup
$\hat \scK$ of $\tilde \scG$, and we also have a unique lifts $\hat \scN$ and $\hat{\bar\scN}$ of $\scN$
and $\bar \scN$ to $\tilde \scG$.
These lifts do not necessarily coincide on $\scK\cap \scN$ and $\scK\cap \bar \scN$.
However, by reducing the size of $\scK$ is necessary, we may assume that
\[
        \hat \scK \cap \hat \scN
        \stackrel{\pr}{ \cong}
        \scK\cap \scN,
        \quad
        \hat \scK \cap \hat{\bar \scN}
        \stackrel{\pr}{ \cong}
        \scK\cap \bar \scN.
\]

\begin{definition}
        Suppose $\hat \scK$ and $\scK$ are chosen to satisfy these conditions above.
        We say that a good analytic subgroup $\scH$ of $\tilde\scG$ has a rigid analytic Iwahori decomposition
        with respect to $\cP$ and $\bar\cP$ if (i) $\scH$ is contained in $\hat\scK$
        and (ii) the image of $\scH$ in $\scG$ has a rigid analytic Iwahori decomposition with respect to $\cP$ and $\bar\cP$.
\end{definition}

Recall the following technical result of Emerton:

\begin{proposition}\cite[Prop. 4.1.6]{emerton-jacquet1}
        \label{emerton-4.1.6}
        We may find a decreasing sequence 
        $\{\scH_{n}\}_{n\ge 0}$ of good analytic 
        open subgroups of $\scG$, cofinal in the directed set of all
         analytic open subgroups of $\scG$, 
        and satisfying the following conditions: 
        \begin{itemize}
                \item[(i)]
                For each $n\ge 0$, the inclusion $\scH_{n+1}\subset \scH_{n}$
                 extends to a relatively compact rigid analytic map
                 $\bbH_{n+1} \subset \bbH_{n}$ .
                 \item[(ii)]
                 For each $n\ge 0$, the subgroup $\scH_{n}$ of $\scH_{0}$ is normal.
        \end{itemize}
        Let $\cP_{\emptyset}$ be a minimal parabolic subgroup of $\cG$ defined over $\Qp$.
        The remaining properties refer to any pair $\cP$ and $\bar \cP$ of opposite parabolic subgroups of $\cG$,
         chosen so that $\cP$ contains $\cP_{\emptyset}$ and $\bar\cP$ contains $\bar\cP_{\emptyset}$.
        \begin{itemize}
                \item[(iii)]
                Each $\scH_{n}$ admits a rigid analytic Iwahori decomposition with respect to $\cP$ and $\bar\cP$.
                \item[(iv)]
                If $z\in Z_{\scM}$ is such that $z^{-1} \bar \scN_{0} z \subset \bar \scN_{0}$,
                 then $z^{-1}\bar \scN_{n} z \subset \bar \scN_{n}$ for each $n \ge 0$.
                \item[(v)]
                If $z\in Z_{\scM}$ is such that $z \scN_{0} z^{-1}  \subset \scN_{0}$,
                 then $z \scN_{n} z^{-1} \subset \scN_{n}$ for each $n \ge 0$.
                \item[(vi)]
                We may find $z \in Z_{\scM}$ such that
                $z^{-1}\bar \scN_{0} z \subset \bar \scN_{0}$ and $z \scN_{0} z^{-1}  \subset \scN_{0}$,
                and such that, for each $n\ge 0$,
                the embedding of part (iv) factors through the inclusion
                $\bar \scN_{n+1}\subset \bar \scN_{n}$. 
        \end{itemize}
\end{proposition}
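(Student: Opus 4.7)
The plan is to construct the sequence $\{\scH_n\}$ by scaling a single well-chosen Lie sublattice $\gh_0$ of $\gg$ adapted to the minimal parabolic $\cP_\emptyset$, with Levi decomposition $\cP_\emptyset = \cM_\emptyset \cN_\emptyset$. First I would fix a minimal parabolic $\cP_\emptyset$ together with its opposite $\bar\cP_\emptyset$, sharing the Levi $\cM_\emptyset$, and a Lie sublattice $\gh_0 \subset \gg$ small enough for the Baker--Campbell--Hausdorff series to converge and produce a good analytic open subgroup $\scH_0$. The crucial requirement on $\gh_0$ is that it be compatible with the root space decomposition relative to $\cM_\emptyset$, i.e.
\[
        \gh_0 = (\gh_0 \cap \bar\gn_\emptyset) \oplus (\gh_0 \cap \gm_\emptyset) \oplus (\gh_0 \cap \gn_\emptyset).
\]
Since every pair of opposite parabolics $\cP \supset \cP_\emptyset$ and $\bar\cP \supset \bar\cP_\emptyset$ has its own triangular decomposition compatible with that of $\cP_\emptyset$, this compatibility for $\gh_0$ will automatically propagate to all such $\cP$.

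Next I would set $\gh_n = p^n \gh_0$, let $\scH_n$ denote the corresponding good analytic open subgroup, and let $\bbH_n$ be its underlying rigid analytic group. Property (i) follows because the inclusion $\bbH_{n+1} \hookrightarrow \bbH_n$ corresponds on the Lie algebra side to an inclusion of closed polydiscs of strictly smaller radius, hence is relatively compact. For (ii), normality of $\scH_n$ in $\scH_0$ follows from the fact that $\gh_n$ is an ideal in $\gh_0$ for the commutator bracket on $\gh_0$ (by bilinearity, $[\gh_0, \gh_n] \subset p^n [\gh_0,\gh_0] \subset \gh_n$), so that the BCH product preserves the corresponding subgroup. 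For (iii), because $\gh_0$ splits as above, so does $\gh_n$; using the convergence of BCH and the fact that $\bbH_n$ is the rigid analytification of the exponential image of $\gh_n$, the rigid analytic multiplication map $\bar\N_n \times \M_n \times \N_n \to \bbH_n$ becomes an isomorphism.

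For parts (iv), (v), (vi) I would exploit the fact that for $z \in Z_\scM$ the action $\mathrm{Ad}(z)$ preserves the root space decomposition, acting on each root space $\gg_\alpha$ by the scalar $\alpha(z)$. Thus $z^{-1} \bar\scN_0 z \subset \bar\scN_0$ is equivalent to $\mathrm{Ad}(z^{-1})$ preserving $\bar\gn \cap \gh_0$, which holds when $v_p(\alpha(z)) \le 0$ for all roots $\alpha$ of $\bar\gn$; and once this holds, it holds for every $\gh_n = p^n \gh_0$, giving (iv). Property (v) is symmetric. For (vi), I would find $z \in Z_\scM$ with $v_p(\alpha(z)) > 0$ for all roots $\alpha$ of $\cN$ (equivalently $v_p(\alpha(z)) < 0$ for all roots $\alpha$ of $\bar\cN$); such $z$ exists because the split part of $Z_\scM$ modulo its maximal compact subgroup is a finitely generated free abelian group mapping to the coroot lattice, and the strict dominance condition cuts out an open cone which contains points of this lattice. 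Then $\mathrm{Ad}(z^{-1})$ strictly contracts $\bar\gn \cap \gh_n$ into $\bar\gn \cap \gh_{n+1}$, yielding the required factorisation of the embedding in (iv) through $\bar\scN_{n+1} \subset \bar\scN_n$.

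The main obstacle is organisational rather than technical: one has to choose the single initial lattice $\gh_0$ and the single element $z \in Z_{\scM_\emptyset}$ so that conditions (iii)--(vi) hold \emph{simultaneously} for every parabolic $\cP \supseteq \cP_\emptyset$, not just one at a time. This is handled by working throughout with the minimal parabolic, since any pair $(\cP, \bar\cP)$ of opposite parabolics containing $(\cP_\emptyset, \bar\cP_\emptyset)$ has its root system embedded in that of $(\cP_\emptyset, \bar\cP_\emptyset)$, so that decompositions and contractions established for the minimal parabolic restrict gracefully to any larger $\cP$. Once this is set up, verifying each of (i)--(vi) is a routine computation in the Lie algebra combined with the BCH formula.
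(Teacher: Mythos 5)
The paper does not prove Proposition \ref{emerton-4.1.6} at all; it is quoted verbatim from Emerton \cite[Prop.~4.1.6]{emerton-jacquet1}. So there is no in-paper proof to compare against, and I am assessing your sketch against the standard argument (which is essentially Emerton's). Your overall strategy --- pick a single Lie sublattice $\gh_0$ adapted to the restricted-root decomposition with respect to $\cM_\emptyset$, set $\gh_n = p^n\gh_0$, and exponentiate --- is the right one and matches the shape of Emerton's construction, so the approach is sound.

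Two places deserve more care. First, the displayed condition
\[
\gh_0 = (\gh_0 \cap \bar\gn_\emptyset)\oplus(\gh_0\cap\gm_\emptyset)\oplus(\gh_0\cap\gn_\emptyset)
\]
is necessary but not by itself sufficient for the propagation you invoke: for an intermediate parabolic $\cP\supseteq\cP_\emptyset$ with Levi $\cM\supsetneq\cM_\emptyset$, you need $\gh_0 = (\gh_0\cap\bar\gn)\oplus(\gh_0\cap\gm)\oplus(\gh_0\cap\gn)$, and since $\gm$ mixes $\gm_\emptyset$ with some root spaces of $\gn_\emptyset$ and $\bar\gn_\emptyset$, the three-fold splitting for $\cP_\emptyset$ does not imply it. What you actually want is that $\gh_0$ splits as $(\gh_0\cap\gm_\emptyset)\oplus\bigoplus_\alpha(\gh_0\cap\gg_\alpha)$ over all restricted roots $\alpha$; from that finer condition the three-fold splitting for every standard $\cP$ follows. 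Your prose (``compatible with the root space decomposition'') clearly intends this, but the equation as written is weaker, and it's exactly the point on which the uniformity over all $\cP$ turns. Second, condition (iii) requires the multiplication map $\bar\N_n\times\M_n\times\N_n\to\bbH_n$ to be an isomorphism \emph{of rigid analytic spaces}, not merely a bijection on $\Qp$-points. Having $\gh_n$ split as a direct sum does not automatically give this; one still has to check that the (BCH-defined) formal inverse to the product map converges on the relevant polydiscs, which is the genuine content and requires $\gh_0$ to be small enough. Your phrase ``using the convergence of BCH \dots the map becomes an isomorphism'' elides this step, and it is where the real work in Emerton's proof lies. Items (i), (ii), (iv)--(vi) are argued correctly, including the valuation/cone argument that produces the contracting $z\in Z_\scM$.
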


In order to prove Theorem \ref{thm:essential-admiss}, it is sufficient to prove the following result analogous to Proposition \ref{emerton-4.1.6}.
The rest of the proof of the theorem is word for word the same as in \cite{emerton-jacquet1}.

\begin{proposition}
        \label{emerton-4.1.6-met}
        We may find a decreasing sequence 
        $\{\scH_{n}\}_{n\ge 0}$ of good analytic 
        open subgroups of $\tilde \scG$, cofinal in the directed set of all
         analytic open subgroups of $\tilde \scG$, 
        and satisfying the following conditions: 
        \begin{itemize}
                \item[(i)]
                For each $n\ge 0$, the inclusion $\scH_{n+1}\subset \scH_{n}$
                 extends to a relatively compact rigid analytic map
                 $\bbH_{n+1} \subset \bbH_{n}$ .
                 \item[(ii)]
                 For each $n\ge 0$, the subgroup $\scH_{n}$ of $\scH_{0}$ is normal.
        \end{itemize}
        Let $\cP_{\emptyset}$ be a minimal parabolic subgroup of $\cG$ defined over $\Qp$.
        The remaining properties refer to any pair $\cP$ and $\bar \cP$ of opposite parabolic subgroups of $\cG$,
         chosen so that $\cP$ contains $\cP_{\emptyset}$ and $\bar\cP$ contains $\bar\cP_{\emptyset}$.
        \begin{itemize}
                \item[(iii)]
                Each $\scH_{n}$ admits a rigid analytic Iwahori decomposition with respect to $\cP$ and $\bar\cP$.
                \item[(iv)]
                If $\tilde z\in Z_{\tilde \scM}$ is such that $\tilde z^{-1} \hat {\bar \scN}_{0} \tilde z \subset \hat{\bar \scN}_{0}$,
                 then $\tilde z^{-1} \hat{\bar \scN}_{n} \tilde z \subset \hat{\bar \scN}_{n}$ for each $n \ge 0$.
                \item[(v)]
                If $\tilde z\in Z_{\tilde \scM}$ is such that $\tilde z\hat \scN_{0} \tilde z^{-1}  \subset \hat \scN_{0}$,
                 then $\tilde z \hat \scN_{n} \tilde z^{-1} \subset \scN_{n}$ for each $n \ge 0$.
                \item[(vi)]
                We may find $\tilde z \in Z_{\tilde \scM}$ such that
                $\tilde z^{-1} \hat{\bar \scN}_{0} \tilde z \subset \hat{\bar \scN}_{0}$
                and $\tilde z\hat \scN_{0} \tilde z^{-1}  \subset \hat \scN_{0}$,
                and such that, for each $n\ge 0$,
                the embedding of part (iv) factors through the inclusion
                $\hat{\bar \scN}_{n+1}\subset  \hat{\bar \scN}_{n}$. 
        \end{itemize}
\end{proposition}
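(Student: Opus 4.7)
The plan is to import Proposition~\ref{emerton-4.1.6} wholesale from $\scG$ to $\tilde\scG$ via the canonical structures available in the metaplectic setting. Two inputs do all the work: the unique splittings $\tau\colon\scN\to\hat\scN$ and $\tau\colon\bar\scN\to\hat{\bar\scN}$, together with the conjugation identity $\tilde m\,\tau(n)\,\tilde m^{-1}=\tau(mnm^{-1})$ established in part~(c) of the first lemma of \S\ref{sect:jacquet}; and Lemma~\ref{finite-index-centre}, which says that $\pr(Z_{\tilde\scM})$ contains $Z_{\scM}^{|\mu|}$. Since $\mu$ is discrete, $\pr$ restricts on the compact open subgroup $\hat\scK$ to a topological isomorphism $\hat\scK\stackrel{\sim}{\to}\scK$ that is compatible with the common Lie algebra $\gg$ and with the exponential map, so good analytic open subgroups of $\tilde\scG$ contained in $\hat\scK$ are in canonical bijection with good analytic open subgroups of $\scG$ contained in $\scK$, carried by the same affinoid $\bbH$.

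First I would apply Proposition~\ref{emerton-4.1.6} to $\scG$, shrinking the ambient compact open subgroup so that the resulting $\scH_0^{\circ}$ lies in $\scK$ and so that the identifications $\hat\scK\cap\hat\scN\stackrel{\sim}{\to}\scK\cap\scN$ and $\hat\scK\cap\hat{\bar\scN}\stackrel{\sim}{\to}\scK\cap\bar\scN$ hold (the standing hypothesis recorded just before the definition preceding Proposition~\ref{emerton-4.1.6-met}). This yields a sequence $\{\scH_n^{\circ}\}_{n\ge 0}$ of good analytic open subgroups of $\scG$ satisfying (i)--(vi) in $\scG$, together with a witness $z_0\in Z_{\scM}$ for~(vi). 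Let $\scH_n\subseteq\hat\scK$ denote the unique lift of $\scH_n^{\circ}$. Then (i) and (ii) are immediate from the fact that $\pr\colon\hat\scK\to\scK$ is an isomorphism of topological groups respecting the analytic structure, and (iii) is tautological given the definition of a rigid analytic Iwahori decomposition for subgroups of $\tilde\scG$, which reduces by definition to the corresponding property for $\scH_n^{\circ}$.

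For~(iv) and~(v), given $\tilde z\in Z_{\tilde\scM}$ satisfying the contraction hypothesis, applying the conjugation identity with $m=\pr(\tilde z)$ converts both the hypothesis and the desired inclusion into the parallel contraction statement in $\scG$ for $z:=\pr(\tilde z)$; the required conclusion is then precisely Emerton's~(iv) (resp.~(v)) applied to $z$. For~(vi), I would set $z:=z_0^{|\mu|}$: iterating Emerton's~(vi) $|\mu|$ times gives $z^{-1}\bar\scN_n^{\circ}z\subseteq\bar\scN_{n+|\mu|}^{\circ}\subseteq\bar\scN_{n+1}^{\circ}$ and likewise on the $\scN$ side, so $z$ still satisfies (vi) in $\scG$; by Lemma~\ref{finite-index-centre} one has $z\in\pr(Z_{\tilde\scM})$, so we may choose $\tilde z\in Z_{\tilde\scM}$ above $z$, and the conjugation identity transfers (vi) to $\tilde\scG$.

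I do not anticipate a genuine obstacle here: essentially all analytic and dynamical content already lives on the $\scG$ side, and the metaplectic layer is absorbed by the existence and uniqueness of the splittings $\tau$ together with Lemma~\ref{finite-index-centre}. The only small piece of bookkeeping is the initial shrinking to preserve the identifications $\hat\scK\cap\hat\scN\cong\scK\cap\scN$ and its analogue for $\bar\scN$ at every scale, which is available since Emerton's construction allows $\scH_0^{\circ}$ to be shrunk arbitrarily.
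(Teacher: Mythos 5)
Your proposal is correct and follows essentially the same route as the paper: shrink so that $\scH_0$ lies in $\scK$ (possible by Emerton's construction), lift the resulting tower through the isomorphism $\hat\scK\cong\scK$, observe that (i)--(iii) are automatic, and use the conjugation identity for $\tau$ together with Lemma~\ref{finite-index-centre} to transfer (iv)--(vi). Your explicit choice $r=|\mu|$ and the iterated inclusion $z^{-r}\bar\scN_n z^r\subseteq\bar\scN_{n+r}$ merely makes precise the paper's brief remark that every power of $z$ continues to satisfy Emerton's (vi).
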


\begin{proof}
        Recall that we have a compact open subgroup $\scK$ of $\scG$,
        which lifts to a subgroup $\hat\scK$ of $\tilde \scG$.
        Furthermore, $\scK$ and $\hat\scK$ are chosen small enough so that
        for each standard parabolic subgroup $\cP=\cM\cN$,
        the lift $\scK \to \hat\scK$    coincides with the lift $\scN \to \hat \scN$
        (resp. $\bar \scN \to \hat{\bar \scN}$)
        on $\scK\cap \scN$ (resp. $\scK\cap \bar\scN$).
        Emerton's proof of Proposition \ref{emerton-4.1.6} actually shows a little bit more than is stated.
        He in fact shows that we may in addition take $\scH_{0}$ to be arbitrarily small.
        We may therefore take a sequence of subgroups $\scH_{n}$ satisfying Proposition \ref{emerton-4.1.6}
        with $\scH_{0}$ contained in $\scK$.
        We then define a new sequence of subgroups $\hat\scH_{n}$ in $\tilde\scG$, where each
        $\hat \scH_{n}$ is the lift of $\scH_{n}$ to $\hat\scK$.
        We claim that the sequence $\hat \scH_{n}$ satisfies Proposition \ref{emerton-4.1.6-met}.
        Properties (i), (ii) and (iii) for $\hat\scH_{n}$ are clear,
        since they only depend on the original groups $\scH_{n}$.
        We next consider property (iv). For an element $\tilde z\in Z_{\tilde \scM}$, we shall write
        $z$ for the image of $z$ in $Z_{\scM}$.
        Lemma \ref{conjugate-lift} (c) shows that the equation
        $\tilde z^{-1} \hat{\scN}_{n} \tilde z \subset \hat{\scN}_{n}$ is equivalent to
        $z^{-1} \scN_{n} z \subset \scN_{n}$.
        Hence property (iv) of Proposition \ref{emerton-4.1.6-met} is a consequence of property (iv)
        of Proposition \ref{emerton-4.1.6}.
        Similarly, property (v) of Proposition \ref{emerton-4.1.6-met} follows from the corresponding
        property in Proposition \ref{emerton-4.1.6}.
        Suppose that $z \in Z_{\scM}$ is chosen to satisfy property (vi) of Proposition  \ref{emerton-4.1.6}.
        It follows that every power $z^{r}$ ($r>0$) also has this property.
        Furthermore by Lemma \ref{finite-index-centre}, there is a suitable power $z^{r}$ which is
        also in $\pr(Z_{\tilde\scM})$.
        We replace $z$ by such a power and let $\tilde z$ be a pre-image in $Z_{\tilde\scM}$ of $z$.
        Again using Lemma \ref{conjugate-lift} (c), we deduce that $\tilde z$ has property (vi)
        of Proposition \ref{emerton-4.1.6-met}.
\end{proof}

Exactly as in \cite{hillloeffler11}, we may strengthen Theorem \ref{thm:essential-admiss} as follows. Let $\cD$ be the derived subgroup of $\cM$ (a semisimple algebraic group over $\Qp$), and $\scD = \cD(\Qp)$.
Then if $\mathscr{D}_0$ is an open compact subgroup of $\scD$ which lifts to
a subgroup $\hat\scD_{0}$ of $\tilde\scD$,
and $W$ a finite-dimensional continuous representation of $\hat\scD_0$,
we may consider the representation
\[
        \left(J_\cP(V) \otimes W\right)^{\hat{\mathscr{D}}_0}
\]
of $Z_{\tilde \scM}$. This representation is essentially admissible, by proposition 3.3 of \cite{hillloeffler11}. Since $Z_{\tilde \scM}$ is commutative, this space corresponds to a coherent sheaf on the character space $\widehat{Z_{\tilde \scM}}$. Let $\Sigma$ be the support of this sheaf. Differentiation of characters gives a map $\widehat{Z_{\tilde \scM}} \to \check{\mathfrak{z}}$, where $\mathfrak{z}$ is the Lie algebra of $Z_\cM$ over $E$ and $\check{\mathfrak{z}}$ its dual space.

\begin{theorem}\label{thm:discretefibres}
If $V$ is admissible, then the map $\Sigma \to \check{\mathfrak{z}}$ has discrete fibres.
\end{theorem}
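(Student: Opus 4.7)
The plan is to adapt the argument for the non-metaplectic analogue in \cite{hillloeffler11}, reducing the discreteness statement to a finite-dimensionality claim that is derived from the admissibility hypothesis on $V$.

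First I would fix a point $\chi_0 \in \check{\mathfrak{z}}$ and aim to show that the fibre
\[
        \Sigma_{\chi_0} := \{\chi \in \Sigma : d\chi = \chi_0\}
\]
is a discrete subset of $\widehat{Z_{\tilde \scM}}$. Since $d$ is a homomorphism of rigid-analytic groups with kernel the smooth character subspace $\widehat{Z_{\tilde \scM}}^{\mathrm{sm}}$, each fibre is a torsor for $\widehat{Z_{\tilde \scM}}^{\mathrm{sm}}$. Choosing (possibly after a finite extension of $E$) a locally analytic character $\chi_1$ of $Z_{\tilde \scM}$ with $d\chi_1 = \chi_0$, via the exponential on a pro-$p$ neighbourhood of the identity, and twisting the coefficients, the claim reduces to showing that the smooth characters of $Z_{\tilde \scM}$ appearing as eigencharacters of the subspace
\[
        M_0 := \bigl\{ v \in \bigl((J_\cP(V)\otimes W)^{\hat\scD_0}\bigr)\otimes\chi_1^{-1} : \mathfrak{z}\cdot v = 0 \bigr\}
\]
form a discrete subset of $\widehat{Z_{\tilde \scM}}^{\mathrm{sm}}$.

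The central step is then to identify $M_0$ with a finite direct sum of classical smooth Jacquet modules. For this I would use Theorem \ref{thm:liecohomology} to translate between Lie-algebra invariants in the locally analytic vectors and continuous cohomology with coefficients in $V$ itself, combined with the Jacquet--locally-algebraic compatibility $J_\cP(X\otimes W') = X_{\hat\scN}\otimes(W')^\cN$ for smooth admissible $X$ (established earlier in this section). The constraint $\mathfrak{z}\cdot v = 0$, together with the twist by $W\otimes\chi_1^{-1}$ and the $\hat\scD_{0}$-invariance, restricts attention to the finitely many irreducible algebraic representations $W'$ of $\cG$ whose infinitesimal character on $\mathfrak{z}$ matches the prescribed one; each contributes a summand of $M_0$ of the form $\Hom_{\gg}(W',V_{\la})_{\hat\scN}\otimes (W')^\cN$, further tensored by the fixed finite-dimensional representation coming from $W\otimes\chi_1^{-1}$. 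By admissibility of $V$, each $\Hom_{\gg}(W',V_{\la})$ is a smooth admissible representation of $\tilde \scG$, so its smooth Jacquet module is a smooth admissible representation of $\tilde \scM$; its $\hat\scD_{0}$-invariants are then finite-dimensional, and hence $M_0$ is finite-dimensional. Finite-dimensionality immediately yields discreteness (even finiteness) of the $Z_{\tilde \scM}$-eigencharacters on $M_0$.

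The main obstacle I expect is the careful verification of the finiteness of the set of contributing $W'$ and the identification of $M_0$ with the corresponding finite sum of classical Jacquet modules; in particular, one must track the interaction between the $W$-twist, the $\chi_1$-twist and the $\hat\scD_{0}$-invariance on the Lie-algebra-invariant locus, and control any higher $\Ext$ contributions that arise when passing from $\Ext^{\bullet}_{\gg}$ to the plain Lie-invariant subspace. Once this bookkeeping is in place, the remainder of the argument is a direct consequence of admissibility of $V$ and the classical preservation of smooth admissibility by the smooth Jacquet functor, which carries over to the metaplectic setting via the canonical lift $\hat\scN$.
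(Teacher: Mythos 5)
Your approach is incorrect, and the gap is structural rather than a bookkeeping issue.

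The central claim of your argument is that the $\mathfrak{z}$-null slice $M_0$ of the Jacquet module can be identified with a finite direct sum of classical smooth Jacquet modules $\Hom_{\gg}(W',V_{\la})_{\hat\scN}\otimes (W')^\cN$, and is therefore finite-dimensional. This cannot work, for two reasons.

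First, a vector $v\in V^{\hat\scN_0}$ which is killed by $\gn$ and on which $\mathfrak{z}$ acts by a prescribed character need not be locally algebraic: the opposite nilpotent subalgebra $\bar\gn$ can act in a non-locally-finite way, and the condition that $\mathfrak{z}\subset\gm$ acts by $\chi_0$ puts no constraint on the $U(\bar\gn)$-orbit of $v$. So the $\mathfrak{z}$-eigenspace inside $V^{\hat\scN_0}$ is in general much larger than the span of the locally algebraic vectors, and the passage from locally analytic to locally algebraic vectors that Theorem \ref{thm:liecohomology} would provide simply does not describe $M_0$.

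Second, even granting some such identification, the intended conclusion — finite-dimensionality of $M_0$ — is false in general and is strictly stronger than what the theorem asserts. The theorem claims only that the fibre is \emph{discrete}; these fibres are typically infinite. Already for the ordinary (non-metaplectic) Coleman--Mazur eigencurve, the fibre of the eigenvariety over a single point of weight space contains infinitely many points (one for each finite slope), so the $\mathfrak{z}=\chi_0$ slice of $J_\cB(V)$ is infinite-dimensional even though the set of $Z$-characters appearing in it is discrete. An argument that tried to establish finite-dimensionality would therefore have to fail somewhere.

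The proof actually intended in the paper is quite different: it is the functional-analytic Fredholm-theory argument of \cite[Proposition 4.2.23]{emerton-jacquet1} (and its extension to non-Borel parabolics in \cite[\S 4.3]{hillloeffler11}), carried over to the metaplectic setting via the family of good analytic open subgroups $\scH_n$ from Proposition \ref{emerton-4.1.6-met}. One writes the dual of $J_\cP(V)$ as a coadmissible module over $\cC^{\an}(\widehat{Z_{\tilde\scM}})\mathbin{\hat\otimes}\cD^{\la}(\scM_0)$, and uses the rigid-analytic Iwahori decomposition together with the relative compactness of the inclusions $\bbH_{n+1}\subset\bbH_n$ to show that a suitable $z\in\tilde Z^+$ acts via a compact operator on the relevant duals. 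Riesz/Fredholm theory then gives the discreteness of the support over the weight direction. None of this reduces to the smooth Jacquet module; indeed, the whole point of essential admissibility (as opposed to admissibility) is that $J_\cP(V)$ contains far more than the finite-slope part of the locally algebraic vectors.
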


If $\cP = \cB$ is a Borel subgroup, so $\cD$ is trivial, then this is a metaplectic analogue of \cite[Proposition 4.2.23]{emerton-jacquet1}, and the proof also carries over identically using the family of subgroups $\scH_n$ constructed above. In the general case, one need only add the requirement that the subgroups $\scM_n$ have rigid-analytic decompositions as products of subgroups of $\mathscr{D}$ and $Z_{\scM}$; then the proof proceeds exactly as in \cite[\S 4.3]{hillloeffler11}.

\subsection{The Jacquet functor of an admissible smooth representation}

In this section, we consider a smooth admissible representation $V$ of $\tilde\scG$. The classical theory of the Jacquet functor for smooth representations of algebraic groups applies equally to metaplectic covers such as $\tilde\scG$ (see \cite[\S 6]{mcnamara10}). Recall that for such a representation, the classical Jacquet functor is defined to be
the module of $\hat\scN$-coinvariants $V_{\hat\scN}$. This is the largest quotient of $V$ on
which $\hat\scN$ acts trivially, and is a smooth representation of $\tilde \scM$.
We may however regard $V$ as a locally analytic representation, so we also have the locally
analytic Jacquet functor $J_{\cP}(V)$ defined above.
In this section we show that $J_{\cP}(V)$ is canonically isomorphic to $V_{\hat\scN}$.

For a smooth representation $V$ of $\tilde \scG$, we shall always regard the vector space $V$
as a topological vector space with the finest locally convex topology.
In this topology, \emph{every} vector subspace $V' \subseteq V$ is closed, and the
subspace topology on $V'$ is again the finest locally convex topology.
This has the following consequence: if $U$ is a Fr\'echet space, and $f : U
\to V$ is a continuous linear map, then $f$ must have finite rank. This is
because the space $\operatorname{Coim}(f) = U / \ker(f)$, with its quotient
topology, is a Fr\'echet space; but it maps continuously and bijectively to
$\im(f)$, which has the finest locally convex topology. Hence this
map is a topological isomorphism, and we see that the finest locally convex
topology on $\operatorname{Im}(f)$ is Fr\'echet, which can only happen if
$\operatorname{Im}(f)$ is finite-dimensional.

\begin{theorem}
        \label{smooth-jacquet}
        Let $V$ be an admissible smooth representation of $\tilde \scG$.
        Then there is a canonical isomorphism
        \[
                J_{\cP}(V)
                \cong
                V_{\hat \scN},
        \]
        where $V_{\hat \scN}$ is the space of $\hat \scN$-coinvariants of $V$.
\end{theorem}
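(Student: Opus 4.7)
The plan is to adapt Emerton's proof of the corresponding result in the algebraic case (Proposition 4.3.4 of \cite{emerton-jacquet1}) to the metaplectic setting. The proof should transfer essentially verbatim, since the only difference is the replacement of $\scN$ by its canonical lift $\hat\scN$, and all operations needed (averaging over $\hat\scN_0$, taking coinvariants, the $\tilde\scM^+$-semigroup action) make sense on the cover.

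First, I would construct a natural $\tilde\scM^+$-equivariant map $\alpha \colon V^{\hat\scN_0} \to V_{\hat\scN}$ as the restriction of the canonical projection $V \twoheadrightarrow V_{\hat\scN}$. Equivariance uses that for $m \in \tilde\scM^+$ and $v \in V^{\hat\scN_0}$,
\[
\pi_{\hat\scN_0}(m) v = \int_{\hat\scN_0} \pi(n)\pi(m) v\, dn \equiv \pi(m) v \pmod{V(\hat\scN)},
\]
since each $\pi(n)\pi(m)v - \pi(m)v \in V(\hat\scN)$.

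Second, I would show that $\alpha$ factors through the finite slope part $J_{\cP}(V) = (V^{\hat\scN_0})_{\fs}$. The target $V_{\hat\scN}$ is a smooth admissible representation of $\tilde\scM$, so every vector in it has finite-dimensional orbit under any compact subgroup of $Z_{\tilde\scM}$, and hence the whole $\tilde Z^+$-action on $V_{\hat\scN}$ is locally finite. This forces the map $\alpha$ to land in the finite slope part, once one notes (using the fact that $V_{\hat\scN}$ inherits the finest locally convex topology) that for a smooth $\tilde\scM^+$-representation, the finite slope part coincides with the locally $\tilde Z^+$-finite part: any continuous linear map from the Fr\'echet space $\cC^{\an}(\widehat{Z}_{\tilde\scM},E)$ into such a space automatically has finite-dimensional image by the observation recalled in the excerpt.

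Third, I would construct an inverse $\beta \colon V_{\hat\scN} \to J_{\cP}(V)$. Given $\bar v \in V_{\hat\scN}$, choose any lift $v \in V$; by smoothness, $v$ is fixed by some compact open $\hat\scN_1 \subset \hat\scN$. Applying the idempotent $e_{\hat\scN_0}(w) = \int_{\hat\scN_0} \pi(n) w\, dn$ to $v$ produces $e_{\hat\scN_0}(v) \in V^{\hat\scN_0}$, whose image under $\alpha$ agrees with $\bar v$ because $v - e_{\hat\scN_0}(v) \in V(\hat\scN_0) \subset V(\hat\scN)$. One checks independence of the chosen lift (any two differ by an element of $V(\hat\scN)$, which maps to $0$ modulo the finite slope quotient of $\ker(\alpha)$), yielding a well-defined $\beta$.

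Finally, $\alpha \circ \beta = \mathrm{id}_{V_{\hat\scN}}$ by construction, and $\beta \circ \alpha = \mathrm{id}_{J_{\cP}(V)}$ follows because any vector in $V^{\hat\scN_0}$ killed by $\alpha$ lies in $V^{\hat\scN_0} \cap V(\hat\scN)$, and such a vector has trivial finite-slope image (its $\tilde Z^+$-orbit is eventually pushed into arbitrarily deep filtration of $V(\hat\scN)$). The main obstacle is this last step: reconciling the finite slope condition (involving the analytic-function topology on $\cC^{\an}(\widehat{Z}_{\tilde\scM}, E)$) with the purely algebraic/smooth structure of $V$. The key simplification, already flagged in the preamble, is that on a space with the finest locally convex topology, every continuous map from a Fr\'echet space has finite rank; this rigidity reduces the analytic finite-slope condition to the elementary condition of local $\tilde Z$-finiteness, where it matches the smooth Jacquet module exactly.
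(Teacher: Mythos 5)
Your proposal is correct and follows essentially the same route as the paper's proof (which in turn follows Emerton's Prop.\ 4.3.4 of \cite{emerton-jacquet1}): the map $\Phi\colon V^{\hat\scN_0}\to V_{\hat\scN}$, surjectivity via averaging, identification of $\ker\Phi$ with the null vectors, and the reduction to finite-dimensional $Z^+_{\tilde\scM}$-stable pieces using the finest-locally-convex-topology observation. The one point worth tightening is your construction of $\beta$: well-definedness (independence of the lift) is not free but is precisely the content of the decomposition $V^{\hat\scN_0} = (V^{\hat\scN_0})_{\Null}\oplus(V^{\hat\scN_0})_{\fs}$, which the paper establishes explicitly by writing $V^{\hat\scN_0}$ as a direct limit of finite-dimensional $Z^+_{\tilde\scM}$-modules $W$, observing $(V^{\hat\scN_0})_{\fs}=\varinjlim W_{\fs}$ via the finite-rank argument, and checking $W = W_{\fs}\oplus W_{\Null}$ by linear algebra; so your explicit inverse is a repackaging of the same decomposition rather than an alternative to it.
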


\begin{proof}
        The proof of this theorem is exactly the same as in the algebraic case,
        which is dealt with in \cite{emerton-jacquet1}. We shall merely recall the main steps.
        Composing the inclusion $V^{\hat \scN_{0}} \to V$ with the projection $V\to V_{\hat \scN}$
        we get a canonical map $\Phi:V^{\hat \scN_{0}}\to V_{\hat \scN}$, and this map is $\tilde \scM^{+}$-equivariant.
        Using the smoothness of $V$, we can show that $\Phi$ is also surjective:
        indeed for any $v\in V$ we may define
        \[
                v'
                =
                \pi_{\hat \scN_{0}}(v)
                =
                \frac{1}{|\hat \scN_{0}|} \int_{\hat \scN_{0}} \pi(n) v\ dn.
        \]
        Clearly $v'$ is in $V^{\hat \scN_{0}}$, and has the same image in $V_{\hat \scN}$ as $v$.
        The kernel of $\Phi$ consists of those vectors $v\in V^{\hat \scN_{0}}$ for which there is a sufficiently large compact open subgroup
        $\hat \scN_{1}\subset \hat \scN$, for which $\pi_{\hat \scN_{1}}(v)=0$.
        
        We call a vector $v\in V^{\hat \scN_{0}}$ \emph{null} if there is a $z\in Z_{\tilde \scM}^{+}$,
        such that $\pi_{\hat \scN_{0}}(z)(v)=0$.
        One easily checks that the kernel of $\Phi$ consists of the null vectors in $V^{\hat \scN_{0}}$.
        To complete the proof, it suffices to show that
        $V^{\hat \scN_{0}}= (V^{\hat \scN_{0}})_{\Null} \oplus (V^{\hat \scN_{0}})_{\fs}$.
        
        Using the fact that $V$ is an admissible smooth representation, one can show that
        each $v\in V^{\hat \scN_{0}}$ is contained in a finite dimensional $Z_{\tilde \scM}^{+}$-invariant subspace $W$.
        We therefore have $V^{\hat \scN_{0}}=\limd{} W$ with $W$ ranging over such subspaces.
        Since $\cC^{\an}(\widehat Z_{\tilde \scM})$ is a Fr\'echet space, the remark preceding the theorem shows that
        $(V^{\hat \scN_{0}})_{\fs}=\limd{} W_{\fs}$.
        Furthermore it is clear that $(V^{\hat \scN_{0}})_{\Null}=\limd{} W_{\Null}$.
        It follows by elementary linear algebra, that $W=W_{\fs}\oplus W_{\Null}$ for any finite dimensional
        representation $W$ of $Z_{\tilde \scM}^{+}$, and so the result follows.
\end{proof}

More generally, we have the following result applying to locally algebraic
representations of $\tilde \scG$:

\begin{theorem}
        Let $V$ be an admissible smooth representation of $\tilde \scG$
        and let $W$ be a finite dimensional irreducible algebraic representation
        of $\cG$, which we shall regard as a representation of $\tilde \scG$, trivial
        on $\mu$.
        Then there is a canonical isomorphism
        \[
                J_{\cP}(V\otimes W)
                \cong
                V_{\hat \scN} \otimes W^{\scN}.
        \]
\end{theorem}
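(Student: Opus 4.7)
The plan is to reduce to the smooth case treated in Theorem~\ref{smooth-jacquet} by first computing the $\hat\scN_0$-invariants $(V\otimes W)^{\hat\scN_0}$ explicitly for a compact open subgroup $\hat\scN_0\subseteq\hat\scN$, and then passing to the finite-slope part. The key identification to establish is
\[
(V\otimes W)^{\hat\scN_0} \;=\; V^{\hat\scN_0}\otimes W^\scN.
\]

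The inclusion $\supseteq$ is immediate. For the reverse, I use the canonical isomorphism $V\otimes W \cong \Hom_E(W^\vee,V)$, valid since $W$ is finite-dimensional. Under this identification an $\hat\scN_0$-fixed tensor corresponds to an $\hat\scN_0$-equivariant linear map $\phi\colon W^\vee \to V$. The image of any such $\phi$ is a finite-dimensional subspace of $V$, so by smoothness it is pointwise fixed by some open subgroup $\hat\scN_1 \subseteq \hat\scN_0$. The equivariance of $\phi$ then forces $\phi(nw^\vee)=\phi(w^\vee)$ for all $w^\vee\in W^\vee$ and $n\in\hat\scN_1$. Differentiating along the one-parameter subgroups $t\mapsto\exp(tX)$ for $X$ in the Lie algebra $\gn$ of $\cN$, and using the linearity of $\phi$, yields $\phi(X\cdot w^\vee) = 0$ for all $X\in\gn$ and $w^\vee\in W^\vee$. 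Hence $\phi$ vanishes on $\gn W^\vee$ and factors through the quotient $W^\vee/\gn W^\vee \cong (W^\scN)^\vee$. Since $\hat\scN$ acts trivially on $(W^\scN)^\vee$, the residual equivariance condition for the induced map $\bar\phi\colon (W^\scN)^\vee\to V$ amounts to saying $\bar\phi$ lands in $V^{\hat\scN_0}$, as required.

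Once this identification is established, a direct computation (using the triviality of the $\hat\scN$-action on $W^\scN$) shows that the action $\pi_{\hat\scN_0}$ of $\tilde\scM^+$ on $V^{\hat\scN_0}\otimes W^\scN$ is diagonal, with $\pi_{V,\hat\scN_0}$ on the first factor and the algebraic action on the second. Because $W^\scN$ is finite-dimensional with algebraic $Z_{\tilde\scM}$-weights, the finite-slope functor commutes with tensoring by it, giving
\[
\bigl(V^{\hat\scN_0}\otimes W^\scN\bigr)_{\fs} \;\cong\; \bigl(V^{\hat\scN_0}\bigr)_{\fs}\otimes W^\scN.
\]
Theorem~\ref{smooth-jacquet} identifies the first factor on the right as $V_{\hat\scN}$, yielding the desired isomorphism $J_\cP(V\otimes W)\cong V_{\hat\scN}\otimes W^\scN$.

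The main obstacle is the computation of the invariants in the second paragraph: one must carefully exploit the interplay between the smoothness of $V$ and the algebraic (non-smooth) nature of the $\cN$-action on $W$ to eliminate contributions from outside $W^\scN$. The remaining manipulations, including the commutation of the finite-slope functor with finite-dimensional algebraic tensor factors, are essentially formal and closely parallel the algebraic-group setting of~\cite{emerton-jacquet1}.
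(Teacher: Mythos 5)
Your proof is correct and follows essentially the same route as the paper, namely Emerton's Proposition 4.3.6 adapted to the metaplectic setting: first compute $(V\otimes W)^{\hat\scN_0}=V^{\hat\scN_0}\otimes W^{\scN}$, then apply the finite-slope functor (which commutes with tensoring by the finite-dimensional algebraic factor $W^{\scN}$ by Emerton's Proposition 3.2.9) and invoke the smooth Jacquet module comparison. The only real difference is in how the invariants identification is presented: you reformulate $V\otimes W$ as $\Hom_E(W^\vee,V)$, use smoothness to fix the finite-dimensional image of an equivariant $\phi$ under a small open subgroup, and differentiate to see that $\phi$ kills $\gn W^\vee$; the paper argues more directly that $\gn$ acts trivially on the smooth representation $V$, hence $(V\otimes W)^{\gn}=V\otimes W^{\gn}$, then takes $\hat\scN_0$-invariants and uses $W^{\gn}=W^{\hat\scN_0}=W^{\scN}$ (algebraicity plus connectedness of $\hat\scN$). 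Your dualization argument is a correct, if slightly longer, route to the same key identity; both buy the reduction to Theorem~\ref{smooth-jacquet} in the same way.
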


\begin{proof}
        The proof of this result in the algebraic case (Proposition 4.3.6 of \cite{emerton-jacquet1})
        works in the metaplectic case. We shall recall some details here.
        
        Let $\gn$ be the Lie algebra of $\hat \scN$.
        The action of $\gn$ on $V$ is trivial, and so we have
        $(V\otimes W)^{\gn}=V \otimes W^{\gn}$.
        On the other hand, since the action of $\hat \scN$ on $W$ is algebraic (and $\hat \scN$ is connected)
        we have $W^{\gn}=W^{\hat \scN_{0}}$.
        In particular, the action of $\hat \scN_{0}$ on $W^{\gn}$ is trivial, so we have
        $(V \otimes W^{\gn})^{\hat \scN_{0}}= V^{\hat \scN_{0}} \otimes W^{\hat \scN}$.
        This implies $(V\otimes W)^{\hat \scN_{0}}=V^{\hat \scN_{0}}\otimes W^{\scN}$.
        The action of $\tilde \scM^{+}$ on $W^{\hat \scN}$ is the restriction of the usual action of $\scM$.
        Hence by proposition 3.2.9 of \cite{emerton-jacquet1} (which is stated in sufficient generality for our needs),
        it follows that $J_{\cP}(V\otimes W)= (V^{\hat \scN_{0}})_{\fs}\otimes W^{\scN}$.
        The result now follows from Theorem \ref{smooth-jacquet}.
\end{proof}

\begin{corollary}
        \label{loc-alg-jacquet}
        Suppose that $\cG$ is quasi-split over $\Qp$ and let $\cB=\cM\cN$ be a Borel
        subgroup defined over $\Qp$.
        Assume also that $\cG$ splits over $E$.
        Let $V$ be a admissible smooth representation of $\tilde \scG$
        and let $W_{\psi}$ be the finite dimensional irreducible algebraic representation
        of $\cG$ with highest weight $\psi$ with respect to $\cB$.
        Then there is a canonical isomorphism of representations of $\tilde\scM$:
        \[
                J_{\cB}(V\otimes W_{\psi})
                \cong
                V_{\hat \scN} \otimes \psi.
        \]
\end{corollary}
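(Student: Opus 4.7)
The plan is to deduce this directly from the preceding theorem, which says that for any admissible smooth representation $V$ of $\tilde\scG$ and any finite-dimensional irreducible algebraic representation $W$ of $\cG$ (inflated to $\tilde\scG$ via the trivial action of $\mu$), one has
\[
    J_{\cP}(V \otimes W) \cong V_{\hat\scN} \otimes W^{\cN}.
\]
Applying this with $\cP = \cB$ and $W = W_\psi$, the corollary reduces to identifying $W_\psi^{\cN}$ as a representation of $\tilde\scM$.

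For the identification, I would invoke the highest weight theorem (which applies because $\cG$ is quasi-split over $\Qp$ and splits over $E$): if $v_\psi \in W_\psi$ is a highest weight vector with respect to $\cB$, then $v_\psi$ is annihilated by the Lie algebra $\gn$ of $\cN$, hence fixed by the connected unipotent group $\cN$, so $E \cdot v_\psi \subseteq W_\psi^{\cN}$. Conversely, since $W_\psi$ is irreducible of highest weight $\psi$, the space $W_\psi^{\cN}$ is one-dimensional and equals $E \cdot v_\psi$, on which $\cM = \cT$ acts by the character $\psi$.

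It remains to observe that the action of $\tilde\scM$ on $W_\psi^{\cN}$ is precisely the character $\psi$ pulled back along $\pr\colon \tilde\scM \to \scM$: by the hypothesis of the preceding theorem $W_\psi$ is inflated from $\cG$, so $\mu$ acts trivially, and therefore $\tilde\scM$ acts on the one-dimensional space $W_\psi^{\cN}$ through its quotient $\scM = \scT$ via $\psi$. Substituting this into the displayed isomorphism yields
\[
    J_{\cB}(V \otimes W_\psi) \cong V_{\hat\scN} \otimes \psi
\]
as representations of $\tilde\scM$. There is no genuine obstacle here: this is a straightforward corollary of the preceding theorem combined with the standard structure of irreducible highest weight representations, and the only subtlety — that $\psi$ should be viewed as a character of $\tilde\scM$ via inflation — is handled by the hypothesis that $W_\psi$ is trivial on $\mu$.
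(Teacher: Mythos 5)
Your proposal is correct and follows the same route as the paper, which simply asserts that the corollary is a special case of the preceding theorem; you have merely spelled out the implicit identification of $W_\psi^{\cN}$ with the one-dimensional character $\psi$ via the highest weight theorem, which the paper itself invokes earlier (cf.~the introduction's appeal to \cite[Theorem 1.1.1]{emerton-jacquet1}).
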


\begin{proof}
        This is just a special case of the previous result.
\end{proof}

\subsection{Small slope vectors}

In this section, we'll assume for simplicity 
that $\cG$ is split over the coefficient field $E$, so every irreducible
algebraic representation of $\cG$ over $E$ is absolutely irreducible.
We shall write $\bZ_{\cM}$ for the centre of $\cM$; the group $\bZ_{\cM}$ is a
torus, and we write $\cS$ for the maximal subtorus of $\bZ_{\cM}$ which splits over $\Qp$.
Let $\ord$ denote the valuation on $\overline{\Q}_{p}$
mapping $p$ to 1. 

Let $\chi : Z_{\tilde \scM} \to E^\times$ be a continuous (hence locally $\Qp$-analytic)
character.
The homomorphism $Z_{\tilde \scM} \to \Q $ given by $t
\mapsto \ord(\chi(t))$ clearly factors through the projection to $Z_\scM$. Since
the image of $Z_{\tilde \scM}$ has finite index in $Z_\scM$ (and $\Q $ is
uniquely divisible) this extends uniquely to a linear functional on
$\Q  \otimes_{\Z} \left(Z_\scM / (Z_\scM)_0\right)$, where $(Z_\scM)_0$ is
the maximal compact subgroup of $Z_\scM$.

As in \cite[\S 1.4]{emerton-jacquet1}, we may identify $\Q 
\otimes_{\Z} \left(Z_\scM / (Z_\scM)_0\right)$ with $\Q 
\otimes_{\Z} Y_\bullet$, where $Y_\bullet$ is the cocharacter group of
$\cS$. The linear functional constructed above
thus defines an element of $\Q  \otimes_\Z Y^\bullet$, where
$Y^\bullet$ is the character group of the maximal split subtorus; and as in
\textit{op.cit.} we may define $\slope(\chi) \in \Q 
\otimes_{\Z } Y^\bullet$ to be this element.

Let us write $\Delta(\cG, \cS)$ for the set of positive restricted roots of $\cS$ in $\cG$ (that is, the set of characters of $\cS$ appearing in the adjoint action on $\Lie(\cN)$). We write $R$ for the sublattice of $Y^\bullet$ generated by $\Delta(\cG, \cS)$, which is not necessarily of full rank, and $(\Q  \otimes_\Z R)^{\ge 0}$ for the $\Q^{\ge 0}$-cone in $\Q  \otimes_\Z Y^\bullet$ generated by $\Delta(\cG, \cS)$. Finally, we let $\rho$ denote the weighted half-sum of $\Delta(\cG, \cS)$, i.e. half the sum of the characters of $\cS$ appearing in the adjoint action on $\cN$ weighted by their multiplicities.

The usefulness of these definitions arises from the following two lemmas, generalising Lemmas 4.4.1 and 4.4.2 of \cite{emerton-jacquet1} to the metaplectic case. Recall that we defined $Z_{\tilde \scM}^+ = Z_{\tilde \scM} \cap \tilde \scM^+$.

\begin{lemma}\label{lemma:+veslope}
 We have $|\chi(a)| \le 1$ for all $a \in Z_{\tilde \scM}^+$ if and only if $\slope(\chi) \in (\Q  \otimes_\Z R)^{\ge 0}$.
\end{lemma}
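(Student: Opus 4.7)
The plan is to reduce this to the algebraic case treated by Emerton in \cite[Lemma 4.4.1]{emerton-jacquet1}. The main observation is that both sides of the claimed equivalence descend from $Z_{\tilde \scM}$ to the quotient $Z_{\tilde \scM}/\mu$, which embeds as a finite-index subgroup of $Z_{\scM}$.

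First I would show that the semigroup $Z_{\tilde \scM}^+$ is exactly the preimage of $Z_{\scM}^+ := Z_{\scM} \cap \scM^+$ under $\pr$. This is an easy consequence of Lemma \ref{conjugate-lift}(c), which asserts that the conjugation action of $\tilde \scM$ on the canonical lift $\hat \scN$ factors through $\scM$, so the condition $a \hat\scN_0 a^{-1} \subset \hat\scN_0$ depends only on $\pr(a)$.

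Next, since $\ord\bigl(\chi(a)\bigr)$ depends only on $\pr(a) \in Z_{\scM}$ (this is exactly the observation used to define $\slope(\chi)$), the condition $|\chi(a)| \le 1$ for all $a \in Z_{\tilde \scM}^+$ is equivalent to the assertion that $\slope(\chi)$, regarded as a $\Q$-linear functional on $\Q \otimes_\Z \bigl(Z_\scM/(Z_\scM)_0\bigr)$, takes non-negative values on the image of $Z_{\tilde \scM}^+$ in $Z_\scM$, namely $\pr(Z_{\tilde \scM}) \cap Z_\scM^+$.

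Now Lemma \ref{finite-index-centre} tells us that $\pr(Z_{\tilde \scM})$ has finite index in $Z_\scM$; hence $\pr(Z_{\tilde \scM}^+)$ has finite index in $Z_\scM^+$. Because $Z_\scM^+$ is a semigroup and $\slope(\chi)$ is $\Q$-linear, every element of $Z_\scM^+$ has a positive integer power lying in $\pr(Z_{\tilde \scM}^+)$, so non-negativity of $\slope(\chi)$ on the two semigroups is equivalent. We have thus reduced the claim to the statement that $\slope(\chi) \ge 0$ on $Z_\scM^+$ if and only if $\slope(\chi) \in (\Q \otimes_\Z R)^{\ge 0}$, which is precisely \cite[Lemma 4.4.1]{emerton-jacquet1}.

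The reduction is essentially formal; the only mildly subtle point is the step relating the semigroup $\pr(Z_{\tilde \scM}^+)$ to the full semigroup $Z_\scM^+$, and this is handled by the finite-index assertion plus the $\Q$-linearity of slopes. I do not expect any real obstacle here, and the proof should be short enough to present in a few lines rather than requiring the full apparatus.
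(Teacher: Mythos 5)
Your proposal is correct and follows essentially the same route as the paper: both reduce to Emerton's Lemma 4.4.1 by observing that the projection of $Z_{\tilde \scM}^{+}$ to $Z_{\scM}$ is cofinal in $Z_{\scM}^{+}$ (every element of $Z_{\scM}^{+}$ has a positive power in the image, since $\pr(Z_{\tilde\scM})$ has finite index by Lemma \ref{finite-index-centre}), and that $\slope(\chi)$ is $\Q$-linear. The paper states this in a single sentence; your proposal simply spells out the same argument in more detail.
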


\begin{proof}
 It suffices to note that the projection of $Z_{\tilde \scM}$ has finite index in $Z_{\scM}$; thus the projection of $Z_{\tilde \scM}^+$ is cofinal with $Z_{\scM}^+$, and hence the proof given in \cite{emerton-jacquet1} extends to the metaplectic case also.
\end{proof}

Using this in place of Lemma 4.4.1 of \cite{emerton-jacquet1}, we deduce the following analogue of Lemma 4.4.2 of \emph{op.cit.}:

\begin{lemma}\label{lemma:+veslope2}
 If a locally analytic representation $V$ of $\tilde \scP$ admits a norm which is $\tilde \scP$-invariant,
  and $\chi \in \widehat {Z_{\tilde \scM}}$ is such that
 \[ (V^{\hat \scN_0})[Z_{\tilde \scM}^+ = \chi] \ne 0,\]
 then $\rho + \slope(\chi) \in (\Q  \otimes_\Z R)^{\ge 0}$.
\end{lemma}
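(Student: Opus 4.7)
The strategy is to transport Emerton's proof of his Lemma 4.4.2 from \cite{emerton-jacquet1} to the metaplectic setting; in view of the groundwork already laid (in particular the analogue Lemma \ref{lemma:+veslope} of Emerton's Lemma 4.4.1), this transport is essentially immediate.

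Concretely, I would start with a nonzero $v \in (V^{\hat\scN_0})[Z_{\tilde\scM}^+ = \chi]$, so that for each $z \in Z_{\tilde\scM}^+$,
\[
\chi(z)\, v \;=\; \pi_{\hat\scN_0}(z) v \;=\; \int_{\hat\scN_0} \pi(nz)\, v \, dn,
\]
with Haar measure normalized so that $\hat\scN_0$ has total mass $1$. Every element $nz$ with $n \in \hat\scN_0$ and $z \in Z_{\tilde\scM}^+$ lies in $\tilde\scP$, so the $\tilde\scP$-invariance of the norm forces $\|\pi(nz)v\| = \|v\|$. The non-archimedean estimate for vector-valued integrals against a probability measure then gives $\|\chi(z) v\| \leq \|v\|$, and since $v \neq 0$ we conclude $|\chi(z)| \leq 1$ for every $z \in Z_{\tilde\scM}^+$. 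Feeding this bound into Lemma \ref{lemma:+veslope} yields $\slope(\chi) \in (\Q \otimes_\Z R)^{\geq 0}$, and adding $\rho$ (which itself lies in the cone, being a positive rational combination of elements of $\Delta(\cG,\cS)$) gives the desired $\rho + \slope(\chi) \in (\Q \otimes_\Z R)^{\geq 0}$.

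The only genuinely metaplectic ingredients are (i) the existence of the canonical lift $\hat\scN_0$, so that the integral above makes sense in $\tilde\scG$, and (ii) the finite-index property of $\pr(Z_{\tilde\scM})$ in $Z_\scM$ used in the proof of Lemma \ref{lemma:+veslope}. Both have already been established earlier in this section, so there is no serious obstacle: the proof is in effect a single norm estimate once those preparations are in place.
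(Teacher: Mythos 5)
Your argument breaks down at the key norm estimate. You assert that because each vector $\pi(nz)v$ has norm $\|v\|$ and the Haar measure on $\hat\scN_0$ has total mass $1$, the non-archimedean ultrametric inequality gives $\|\pi_{\hat\scN_0}(z)v\| \le \|v\|$. This is not true: the normalized Haar measure on a compact pro-$p$ group is \emph{not} a bounded $p$-adic measure, so integration against it is not norm-decreasing. Concretely, since $\pi(m)v$ lies in $V^{m\hat\scN_0 m^{-1}}$, the integrand is constant on cosets of $m\hat\scN_0 m^{-1}$, and the integral is
\[
\pi_{\hat\scN_0}(m)v = \frac{1}{[\hat\scN_0 : m\hat\scN_0 m^{-1}]}\sum_{[n]} \pi(nm)v,
\]
so what the ultrametric inequality and the $\tilde\scP$-invariance of the norm actually yield is
\[
|\chi(z)| \;\le\; \bigl|[\hat\scN_0 : z\hat\scN_0 z^{-1}]\bigr|_p^{-1},
\]
and the right-hand side is in general strictly greater than $1$ (it is a positive power of $p$ precisely when $z$ genuinely contracts $\hat\scN_0$). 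The $p$-adic valuation of this index is exactly what produces the $\rho$-shift in the conclusion; tracking it through Lemma \ref{lemma:+veslope} is the actual content of Emerton's Lemma 4.4.2, and this is what the paper is invoking by reference. By contrast, your argument, if correct, would prove the strictly stronger statement $\slope(\chi) \in (\Q\otimes_\Z R)^{\ge 0}$ -- and the fact that you then need to ``add $\rho$'' at the end, which is vacuous once one already has $\slope(\chi)$ in the cone, should have been a signal that the stronger claim cannot be what is actually provable. So the reduction to Lemma \ref{lemma:+veslope} applied directly to $\chi$ is not correct; one must apply it to a $\rho$-shifted character, after quantifying the operator norm of $\pi_{\hat\scN_0}(z)$ in terms of the modulus of conjugation on $\hat\scN_0$, exactly as in the proof in \cite{emerton-jacquet1}.
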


We now recall what is meant by an element of $\Q  \otimes_\Z Y^\bullet$ being of \emph{non-critical slope}. We write $\Delta(\cG, Z_{\cM})$ for the set of positive restricted roots of $Z_\cM$ (that is, the set of characters $\alpha$ of $Z_\cM$ appearing in the adjoint action on $\Lie \cN$). 

By hypothesis, $\cG$ is split over $E$, so we may choose a Borel subgroup $\cB$ of $\cG$ defined over $E$. We can and do assume that the unipotent radical $\cN'$ of $\cB$ contains $\cN_E$, and we choose a Levi factor $\cT$ of $\cB$ such that $Z_{\cM} \subseteq \cT \subseteq \cM$. We define $\Delta(\cG, \cT)$ as the set of characters of $\cT$ appearing in the adjoint action on $\mathfrak{n}' = \Lie \cN'$. As shown in \cite[\S 1.4]{emerton-jacquet1}, for any simple positive restricted root $\alpha \in \Delta(\cG, Z_{\cM})$, there is a unique simple positive root $\tilde \alpha \in \Delta(\cG, \cT)$ with $\tilde \alpha |_{\bZ_{\cM}} = \alpha$. To $\tilde\alpha$ is attached an element $s_{\tilde\alpha}$ of the Weyl group $\mathop{W}(\cG, \cT)$. We define $\rho$ to be the weighted half-sum of $\Delta(\cG, \cS)$, i.e. half the sum of the characters of $\cS$ appearing in the adjoint action on $\cN$ weighted by their multiplicities; and $\tilde \rho$ the half-sum of $\Delta(\cG, \cT)$, so $\tilde\rho|_\cS = \rho$. 

Let $W$ be an irreducible algebraic representation of $\cG$ over $E$. Then $W^{\cN}$ is an
irreducible algebraic representation of $\cM$, and in particular $Z_{\cM}$ acts on $W^{\cN}$ via a character $\psi$. Let $\tilde \psi$ be the highest weight of $W^{\cN}$ with respect to $\cM \cap \cB$; then $\tilde\psi|_{Z_{\cM}} = \psi$. It is shown in \emph{op.cit.} that the element
\[ s_{\tilde\alpha}(\tilde\psi + \tilde \rho)|_{\cS} \in Y^\bullet\]
is independent of the choice of Borel subgroup $\cB$. Let $\chi$ be a character of $Z_{\tilde \scM}$ which is locally $\psi$-algebraic (that is, we may write $\chi = \theta\psi$, where $\theta$ is locally constant).

\begin{definition}[{\cite[Definition 4.4.3]{emerton-jacquet1}}]\label{non-crit-slope}
 We say $\chi = \theta \psi$ is of critical slope with respect to the representation $W^\cN$ if, for some simple positive root $\alpha \in \Delta(\cG, Z_\cM)$, the element
 \[ s_{\tilde\alpha}(\tilde\psi + \tilde \rho)|_{\cS} + \rho + \mathrm{slope}(\theta) \in (\Q \otimes_\Z Y^\bullet)\]
 lies in $(\Q  \otimes_\Z R)^{\ge 0}$. Otherwise, we say $\chi$ is of non-critical slope.
\end{definition}

Exactly as in \cite[\S 4.4]{emerton-jacquet1}, we deduce the following result:

\begin{theorem}\label{thm:smallslopes}
        Let $V$ be a locally analytic representation of $\tilde \scG$,
        and suppose that $V$ admits a $\tilde\scG$-invariant norm.
        Let $W$ be an irreducible algebraic representation of $\cG$, and let $\psi:\bZ_{\cM}\to \bG_{m}$ be
        the central character of $W^{\cN}$.
        Then for any character $\chi: Z_{\tilde \scM} \to E^\times$ which is locally $\psi$-algebraic
        and of non-critical slope with respect to $W^{\cN}$, the map
        \begin{equation}
                \label{isomorphism-eigenspace}
                J_{\cP}(V_{W-{\lalg}})[Z_{\tilde \scM} = \chi] \to J_{\cP}(V)_{W^{\cN}-\lalg}[Z_{\tilde \scM}=\chi]
        \end{equation}
        is an isomorphism.
\end{theorem}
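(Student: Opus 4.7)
My strategy is to transcribe Emerton's proof of the corresponding result for algebraic groups \cite[\S4.4]{emerton-jacquet1}, checking that each step carries over once the metaplectic analogues of the structural lemmas are in hand.

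Injectivity is the easy direction: the inclusion $V_{W\text{-}\lalg} \hookrightarrow V$ is a closed, $\tilde\scP$-equivariant embedding, so by left-exactness of the Jacquet functor the induced map $J_{\cP}(V_{W\text{-}\lalg}) \to J_{\cP}(V)$ is an injection, and its image lies in the $W^{\cN}$-locally algebraic vectors by Corollary \ref{loc-alg-jacquet}. Restricting to the $\chi$-eigenspace then gives the injectivity of the map in the statement.

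For surjectivity, I would fix $v$ in the right-hand side, choose a preimage $\tilde v\in V^{\hat\scN_0}$ in the finite-slope part on which $Z_{\tilde\scM}^+$ acts through $\chi$, and exploit the fact that the $W^{\cN}$-locally algebraic $\chi$-eigenspace of $J_{\cP}(V)$ decomposes, up to pieces killed by $J_{\cP}$, into contributions from subquotients of $V_{\la}$ of the form $U\otimes W''$ with $U$ a smooth admissible representation and $W''$ an irreducible algebraic representation of $\cG$; by Corollary \ref{loc-alg-jacquet}, such a piece contributes $U_{\hat\scN}\otimes W''{}^{\cN}$ to the Jacquet module. A piece with $W''\not\cong W$ would force $\chi$ to differ from $\psi$ by a finite-slope twist associated to the highest weight of $W''{}^{\cN}$; invoking Lemma \ref{lemma:+veslope2} on a suitable twist of $V$ by a dual algebraic representation then forces $\rho + \slope(\text{twist})$ to lie in $(\Q\otimes_\Z R)^{\ge 0}$, which is exactly the inequality excluded by Definition \ref{non-crit-slope} under the hypothesis that $\chi$ has non-critical slope. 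Thus only the $W$-locally algebraic piece can contribute to $v$, giving surjectivity.

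The main technical point to verify is that all of Emerton's slope calculations, which in the algebraic case take place on $Z_\scM$, transfer correctly to the metaplectic center $Z_{\tilde\scM}$. This rests on Lemma \ref{finite-index-centre}: since the image of $Z_{\tilde\scM}$ has finite index in $Z_\scM$, the slope of a character of $Z_{\tilde\scM}$ depends only on its factorization through $Z_\scM$, and $Z_{\tilde\scM}^+$ is cofinal in $Z_\scM^+$ in the sense already exploited in the proof of Lemma \ref{lemma:+veslope}. Apart from this essentially bookkeeping adjustment, no new ideas beyond those of the algebraic case are required, and I expect the proof to be formally identical.
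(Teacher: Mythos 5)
Your treatment of injectivity is fine, and your closing paragraph correctly identifies the point of the metaplectic bookkeeping: Lemma \ref{finite-index-centre} ensures that the projection of $Z_{\tilde\scM}$ is cofinal in $Z_{\scM}$, so the slope calculus of \cite[\S 4.4]{emerton-jacquet1} carries over verbatim via Lemmas \ref{lemma:+veslope} and \ref{lemma:+veslope2}.

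The surjectivity argument, however, has a genuine gap. You assert that the $W^{\cN}$-locally algebraic $\chi$-eigenspace of $J_{\cP}(V)$ decomposes, up to pieces killed by $J_{\cP}$, into contributions from subquotients of $V_{\la}$ of the form $U\otimes W''$ with $U$ smooth and $W''$ algebraic. No such decomposition is available: $V$ is merely a locally analytic representation admitting an invariant norm, and the hard part of the theorem is precisely that a vector $v\in V^{\hat\scN_0}$ can be $W^{\cN}$-locally algebraic for the $\tilde\scM$-action without being $W$-locally algebraic for the $\tilde\scG$-action. Assuming the decomposition essentially presupposes the conclusion. The subsequent appeal to Lemma \ref{lemma:+veslope2} applied to "a suitable twist of $V$ by a dual algebraic representation" is also not the mechanism by which the non-critical slope hypothesis enters.

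The correct argument, which the paper transcribes from \cite[\S 4.4]{emerton-jacquet1}, uses \cite[Prop.\ 3.2.12]{emerton-jacquet1} to replace $J_{\cP}(V)$ by $V^{\hat\scN_0}$, takes a vector $v\in(V^{\hat\scN_0})_{W^{\cN}-\lalg}[Z_{\tilde\scM}^+ = \chi]$ that is not locally $W$-algebraic, and reduces to the case where $v$ is a highest weight vector for $\gn'$ of weight $\tilde\psi$. Then $U(\gg)\cdot v$ is an infinite-dimensional quotient of a Verma module, and the BGG resolution of $W$ produces a simple root $\alpha$ with $X_{-\alpha}^{m+1}v\ne 0$ where $m=\langle\psi,\alpha^\vee\rangle$. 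Emerton's calculation \cite[Prop.\ 4.4.4]{emerton-jacquet1} places $X_{-\alpha}^{m+1}v$ in $V^{\hat\scN_0}[Z_{\tilde\scM}^+ = \gamma]$ with $\gamma=\alpha^{-m-1}\chi$, and applying Lemma \ref{lemma:+veslope2} to this eigenvector gives $\rho+\slope(\gamma)\in(\Q\otimes_{\Z} R)^{\ge 0}$, which is exactly the condition excluded by non-critical slope. Your proposal never reaches the Verma/BGG step, which is the heart of the proof.
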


This result is proved for algebraic groups in \cite[section 4.4]{emerton-jacquet1}, and the same proof
works for representations of metaplectic groups, using the key lemma \ref{lemma:+veslope2}. 

\begin{proof}
Injectivity of \eqref{isomorphism-eigenspace} follows from the fact that $J_{\cP}$ is left-exact.
We must prove surjectivity.
Note that by \cite[Prop 3.2.12]{emerton-jacquet1} we have isomorphisms
\[
        J_{\cP}(V)[Z_{\tilde \scM}^{+}=\chi]
        \cong
        (V^{\hat \scN_{0}})[Z_{\tilde \scM}^{+}=\chi],
        \quad
        J_{\cP}(V_{W-\lalg})[Z_{\tilde \scM}^{+}=\chi]
        \cong
        (V_{W-\lalg}^{\hat \scN_{0}})[Z_{\tilde \scM}^{+}=\chi].
\]
We must therefore show that every vector in $(V^{\hat \scN_{0}})_{W^{\cN}-\lalg}[Z_{\tilde \scM}^{+}=\chi]$ is
locally $W$-algebraic.

Suppose that $v$ is in $(V^{\hat \scN_{0}})_{W^{\cN}-\lalg}[Z_{\tilde \scM}^{+}=\chi]$ and is not locally $W$-algebraic. Since $v$ generates a direct sum of copies of $W^{\cN}$ under the action of $\Lie(\cP)$, we may assume that $v$ is annihilated by $\gn' = \Lie (\cN')$; that is, $v$ is a highest weight vector for $\gg$ with respect to $\gn'$, of weight $\tilde\psi$.
 
Hence the $U(\gg)$-submodule $(U\gg)\cdot v \subseteq V$ is a quotient of a Verma module with
highest weight $\psi$. Since by assumption $v$ is not locally algebraic, this quotient must be infinite-dimensional.
It follows from the Bernstein-Gelfand-Gelfand resolution of $W$ in terms of Verma modules that there must exist a positive simple root $\alpha$ such that $X_{-\alpha}^{m+1}v \ne 0$,
where $X_{-\alpha}$ is in the $-\alpha$ root space in $\gg$;
 $m=\langle\psi,\alpha^{\vee}\rangle$; and $\alpha^{\vee}$ is the corresponding coroot.
The calculation \cite[Prop. 4.4.4]{emerton-jacquet1} shows that $X_{-\alpha}^{m+1}v$ is in
$V^{\hat \scN_{0}}[Z_{\tilde \scM}^{+}=\gamma]$, where $\gamma = \alpha^{-m-1}\chi$.
Applying Lemma \ref{lemma:+veslope2} to $\gamma$ shows that if $V$ has a $\tilde\scP$-invariant norm, and $(V^{\hat \scN_0})[Z_{\tilde \scM}^+ = \gamma] \ne 0$, then $\rho+\slope(\gamma)$ must lie in $(\Q\otimes R^{\bullet})^{\ge 0}$. From this, we deduce that $\chi$ has critical slope, so we have a contradiction.
\end{proof}

\subsection{Emerton's eigenvariety machine}
\label{sect:eigenvar}

In this paragraph, our discussion becomes global again.
We therefore begin with a connected reductive group $\bG$ defined over an algebraic number field $k$
and a fixed prime $\gp$ of $k$ over $p$.
We define
\[
        \cG
        =
        \Rest^{k_{\gp}}_{\Qp} \left( \bG \times_{k} k_{\gp} \right).
\]
Thus $\cG$ is an algebraic group over $\Qp$ and we let $\scG$ be the group of $\Qp$-valued
points of $\cG$.
There is an isomorphism of groups $\scG=G_{\gp}$.
As before, we assume that we have a metaplectic extension $\tbG$ of $\bG$ by $\mu$,
and we define $\tilde\scG$ to be the central extension of $\scG$ obtained by identifying $\scG$
with $G_{\gp}$.

The local theory of the preceding paragraphs was, for a few technical reasons,
described only over $\Qp$. We shall apply this theory to the group $\tilde\scG$.

We make the assumption that $\cG$ is quasi-split over $\Qp$; note that this holds if and only if $\bG$ is quasi-split over $k_{\gp}$, which is well known to be true for all but finitely many primes $\gp$. Let $\cB$ be a Borel subgroup of $\cG$, and $\cT$ a Levi factor of $\cB$. Following our general notational conventions, we let $\scB$ and $\scT$ denote the $\Qp$-points of these, and $\tilde \scB$ and $\tilde \scT$ their preimages in $\tilde \scG = \tilde G_\gp$. 

Let $V$ be an essentially admissible locally analytic $\tilde \scG$-representation, equipped
with a commuting action of $\cH^{\gp}$.
Let $\cH^{\sph}$ be the spherical part of the Hecke algebra.
By functoriality, there is an action of $\cH^{\gp}$ on $J_{\cB}(V)$, which is an essentially
admissible representation of $\tilde \scT$.
Let $Z = Z_{\tilde \scT}$ be the centre of $\tilde \scT$.
Then $J_{\cB}(V)$ restricts to give an essentially admissible representation
of $Z$, and so there is a corresponding coherent rigid analytic sheaf $\scE$ on $\widehat Z$.
There is an action of $\cH^{\sph}$ on the sheaf $\scE$, and we let $\scA$ be the image of $\cH^{\sph}$
in the sheaf of endomorphisms of $\scE$.
We then define the Eigenvariety of $V$ to be the following a rigid analytic space
\[
        \Eig(V) = \Spec (\scA) \subset \widehat Z \times \Spec(\cH^{\sph}).
\]
A point $(\chi,\lambda)\in  \widehat Z \times \Spec(\cH^{\sph})$ is in $\Eig(V)$ if and only if
the $(Z=\chi,\cH^{\rm sph} = \lambda)$-eigenspace in $J_{\cB}(V)$ is non-zero. By construction, the sheaf $\scE$ is the push-forward to $\widehat Z$ of a sheaf on $\Eig(V)$, which we also denote by $\scE$. This sheaf $\scE$ is a sheaf of right $\cH^{\gp}$-modules, and the fibre of $\scE$ over a point $(\chi, \lambda)$ is isomorphic as a right $\cH^{\gp}$-module to the dual of the $(Z=\chi,\cH^{\rm sph} = \lambda)$-eigenspace in $V$.

\begin{theorem}{\mbox{~}}
 \begin{enumerate}
 \item[(i)] The map $\Eig(V) \to \widehat Z$ is finite. 
  \item[(ii)] If $V$ is admissible as a representation of $\tilde \scG$, then the map $\Eig(V)\to \check \gt$ has discrete fibres; in particular the dimension of $\Eig(V)$ is at most the dimension of $\cT$ over $\Qp$.
  \end{enumerate}
\end{theorem}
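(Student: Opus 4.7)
For part (i), the plan is to exploit that the coherent sheaf $\scE$ on $\widehat Z$ has finitely generated sections over any affinoid. By Theorem \ref{thm:essential-admiss}, $J_{\cB}(V)$ is essentially admissible as a representation of $\tilde\scT$, and by restriction it is essentially admissible as a representation of its centre $Z$; the antiequivalence of categories recalled after Definition \ref{essen-admiss} then produces $\scE$ as a coherent sheaf on $\widehat Z$. The algebra $\cH^{\sph}$ acts on $\scE$ by $\cO_{\widehat Z}$-linear endomorphisms, and $\scA$ is by definition its image in $\scEnd_{\cO_{\widehat Z}}(\scE)$. For any affinoid open $\mathbb{U}\subset\widehat Z$, the module $\scE(\mathbb{U})$ is finitely generated over the Noetherian ring $\cO(\mathbb{U})$, so $\End_{\cO(\mathbb{U})}(\scE(\mathbb{U}))$ is itself a finitely generated $\cO(\mathbb{U})$-module, and hence so is the $\cO(\mathbb{U})$-submodule $\scA(\mathbb{U})$. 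This gives that $\scA$ is a coherent sheaf of $\cO_{\widehat Z}$-algebras, so $\Eig(V)=\Spec\scA\to\widehat Z$ is finite.

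For part (ii), the key input is Theorem \ref{thm:discretefibres}, which I will apply in the special case $\cP=\cB$, so that $\cM=\cT$ is a torus and its derived subgroup $\cD$ is trivial. Taking $\scD_{0}$ and $W$ to be trivial, the theorem asserts that the support $\Sigma\subset\widehat Z$ of the coherent sheaf associated to $J_{\cB}(V)$ as a representation of $Z=Z_{\tilde\scT}$ has discrete fibres over $\check\gt$ under the differentiation-of-characters map $\widehat Z\to\check\gt$ (noting that $\mathfrak{z}=\gt$ since $Z$ has finite index in $\scT$ by Lemma \ref{finite-index-centre}). By construction of the eigenvariety, the projection $\Eig(V)\to\widehat Z$ factors through $\Sigma$: a point $(\chi,\lambda)\in\Eig(V)$ forces the $\chi$-eigenspace of $J_{\cB}(V)$ to be non-zero, so $\chi\in\Sigma$. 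By part (i), the induced map $\Eig(V)\to\Sigma$ is finite, and composing a finite map with one having discrete fibres yields discrete fibres, so $\Eig(V)\to\check\gt$ has discrete fibres.

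The dimension bound is then immediate: a map of rigid analytic spaces with discrete fibres does not increase dimension, and $\dim\check\gt=\dim_{\Qp}\gt=\dim\cT$.

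There is no real obstacle here; the hard work has already been absorbed into Theorem \ref{thm:essential-admiss} (preservation of essential admissibility) and Theorem \ref{thm:discretefibres} (discreteness of central-character slopes). The only point requiring a little care is confirming that the hypotheses of Theorem \ref{thm:discretefibres} are satisfied in the Borel case with trivial $W$ and $\scD_{0}$, and that the factorisation through $\Sigma$ is genuinely forced by the non-vanishing of eigenspaces rather than needing a separate argument.
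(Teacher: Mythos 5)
Your proposal is correct and takes essentially the same route as the paper's (very terse) proof: the paper asserts part (i) ``by construction'' and you simply unpack why the image sheaf $\scA$ is a coherent $\cO_{\widehat Z}$-algebra, while for part (ii) both arguments reduce to Theorem \ref{thm:discretefibres} applied with $\cP=\cB$, $\cD$ trivial, noting that $\Eig(V)\to\widehat Z$ factors through $\Sigma=\operatorname{Supp}(\scE)$ (since $\operatorname{Supp}(\scA)\subseteq\operatorname{Supp}(\scEnd(\scE))\subseteq\operatorname{Supp}(\scE)$) and that finite composed with discrete-fibred remains discrete-fibred.
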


\begin{proof}
Part (i) is true by construction, since $\Eig(V)$ is defined as the relative spectrum of a coherent sheaf of algebras on $\widehat Z$. Part (ii) follows from the corresponding statement for the support of the coherent sheaf $\scE$ on $\widehat Z$, which is Theorem \ref{thm:discretefibres} above.
\end{proof}

Let us now take $V = \tH^n_{\varepsilon}(\hat K^{\gp}, E)_{\Qp-\la}$. This is admissible, so the preceding theorem applies to $\Eig(V)$. We suppose for the remainder of this section that the edge map criterion (Definition \ref{edgemapcriterion}) holds  for $(\tbG, \gp, \hat K^{\gp}, \varepsilon, n)$. Then for all algebraic representations $W$ of $\cG$ we have
an isomorphism of smooth $\tilde \scG\times \cH(K^{\gp})$-modules:
\[
        H^{n}_{\cl,\varepsilon}(\hat K_{\gp},W')
        =
        \Hom_{\gg} \left(W, V\right).
\]
We shall show that the eigenvariety $\Eig(V)$ interpolates the finite slope representations in
$H^{n}_{\cl,\varepsilon}(\hat K^{\gp},W')$.

Suppose $\pi$ is an absolutely irreducible representation of $\tilde \scG\times \cH^{\gp}$, which
appears as a subquotient of $H^{n}_{\cl,\varepsilon}(K^{\gp},W')$ for some irreducible algebraic representation $W$
of $\cG$, and suppose that $\pi_{\gp}$ embeds in $\ind_{Z \hat \scN}^{\tilde \scG}(\theta)$ for some smooth
character $\theta$ of the centre $Z$ of $\tilde \scT$.
The Hecke algebra $\cH^{\sph}$ acts on $\pi$ by a character $\lambda\in \Spec(\cH^{\sph})(\bar\Qp)$.
Let $\psi$ be the highest weight character of $W$ with respect to $\cB$, regarded as a character of $Z_{\gp}$.
Then the point $(\theta\psi,\lambda)\in \widehat Z \times \Spec(\cH^{\sph})$ is called a classical point.

\begin{theorem}
        Every classical point is in $\Eig(V)$.
\end{theorem}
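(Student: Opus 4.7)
The plan is to produce a non-zero simultaneous eigenvector with $Z=\theta\psi$ and $\cH^{\sph}=\lambda$ in $J_{\cB}(V)$, where $V=\tH^n_{\varepsilon}(\hat K^{\gp},E)_{\la}$. First I would invoke the edge map criterion to obtain a $\tilde\scG\times\cH^{\gp}$-equivariant closed embedding
\[
W \otimes_{E} M \hookrightarrow V, \qquad M := H^n_{\cl,\varepsilon}(\hat K^{\gp},W'),
\]
identifying $W\otimes_E M$ with the $W$-locally algebraic vectors of $V$. Since $M$ is a smooth admissible representation of $\tilde\scG$ and $W$ is irreducible algebraic of highest weight $\psi$, Corollary \ref{loc-alg-jacquet} combined with the left-exactness of $J_{\cB}$ yields an injection of $\tilde\scT\times\cH^{\gp}$-representations
\[
\psi \otimes M_{\hat\scN} \hookrightarrow J_{\cB}(V).
\]
Because tensoring with $\psi$ shifts the $Z$-action by $\psi$ and commutes with the $\cH^{\sph}$-action, it suffices to exhibit a non-zero $(Z=\theta,\cH^{\sph}=\lambda)$-eigenvector in the smooth Jacquet module $M_{\hat\scN}$.

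The hypotheses on $\pi$ place such an eigenvector inside the subquotient $\pi_{\hat\scN}=(\pi_{\gp})_{\hat\scN}\otimes\pi^{\gp}$ of $M_{\hat\scN}$ (subquotient because the smooth Jacquet functor is exact). Indeed, $Z\hat\scN\backslash\tilde\scG$ is compact (since $\tilde\scT/Z$ is finite and $\tilde\scB\backslash\tilde\scG$ is compact), so that $\ind_{Z\hat\scN}^{\tilde\scG}=\Ind_{Z\hat\scN}^{\tilde\scG}$, and Frobenius reciprocity identifies the embedding $\pi_{\gp}\hookrightarrow\ind_{Z\hat\scN}^{\tilde\scG}(\theta)$ with a non-zero $Z\hat\scN$-equivariant linear functional $\pi_{\gp}\to E_\theta$. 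This functional factors through the $\hat\scN$-coinvariants to give a non-zero $Z$-equivariant quotient $(\pi_{\gp})_{\hat\scN}\to E_\theta$, so $\theta$ arises as a central character of a subquotient of $(\pi_{\gp})_{\hat\scN}$, and hence $(\theta,\lambda)$ is a generalised eigenvalue for $Z\times\cH^{\sph}$ on any sufficiently small level of $\pi_{\hat\scN}$.

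The main obstacle is then to transport this eigendata from the subquotient $\pi_{\hat\scN}$ back to the ambient representation $M_{\hat\scN}$, since the simultaneous-eigenspace functor is only left-exact in general. I would overcome this by passing to finite-dimensional invariants: fix a compact open subgroup $\hat K_{\scT}\subset Z$ small enough to capture the above generalised eigenvector, and consider $M_{\hat\scN}^{\hat K_{\scT}}$, which is finite-dimensional by admissibility of the smooth Jacquet module. Because taking $\hat K_{\scT}$-invariants is exact in characteristic zero and extending scalars to $\bar E$ is harmless, the generalised $(\theta,\lambda)$-eigenspace of $M_{\hat\scN}^{\hat K_{\scT}}\otimes_E\bar E$, for the commutative subalgebra of endomorphisms generated by the images of $Z$ and $\cH^{\sph}$, inherits non-vanishing from its presence on the subquotient $\pi_{\hat\scN}^{\hat K_{\scT}}\otimes_E\bar E$. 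The standard fact that a commuting family of nilpotent endomorphisms of a non-zero finite-dimensional space has a non-zero common kernel upgrades this to an actual $(\theta,\lambda)$-eigenvector. Pushing this eigenvector through $\psi\otimes M_{\hat\scN}\hookrightarrow J_{\cB}(V)$ produces a non-zero $(\theta\psi,\lambda)$-eigenvector in $J_{\cB}(V)\otimes_E\bar E$, so $(\theta\psi,\lambda)\in\Eig(V)(\bar E)$ as claimed.
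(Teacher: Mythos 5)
Your proposal is correct and follows essentially the same route as the paper's own proof: identify the $W$-locally algebraic vectors of $V$ with $W\otimes M$ via the edge-map criterion, compute $J_{\cB}$ on this locally algebraic subspace via Corollary \ref{loc-alg-jacquet} and left-exactness, use exactness of the smooth Jacquet functor together with admissibility to locate an honest (not merely generalised) $(\theta,\lambda)$-eigenvector in $M_{\hat\scN}\otimes\bar E$ from the subquotient $\pi$, and then push it forward into $J_{\cB}(V)$. The only difference is cosmetic: you spell out explicitly the Frobenius-reciprocity step (including the remark that $Z\hat\scN\backslash\tilde\scG$ is compact, so $\ind=\Ind$) and the finite-dimensional reduction via $\hat K_{\scT}$-invariants that upgrades a generalised eigenvalue on a subquotient to an actual eigenvector in $M_{\hat\scN}$; the paper leaves both of these implicit in the statement that $J_{\cB}(M)\otimes\bar\Qp$ is a direct limit of finite-dimensional $Z\times\cH^{\sph}$-modules and therefore the eigenspace is non-zero.
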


\begin{proof}
We first note that $J_{\cB}$ is exact on the subcategory of admissible smooth representations of $\tilde \scG$.
This is because (i) it is constructed as a composition of two left-exact functors, and is therefore left-exact,
and (ii) by theorem \ref{smooth-jacquet}, the Jacquet functor coincides on smooth representations with
the coinvariants, which is right-exact.

By exactness, there is a sub-quotient of $J_{\cB}(H^{n}(\hat K^{\gp}, W'))$ on which
$Z\times \cH^{\sph}$ acts by $(\theta,\lambda)$.
The vector space $J_{\cB}( H^{n}(\hat K^{\gp}, W'))\otimes_{E} \bar \Qp$
 is an admissible smooth representation of $Z$, and is therefore a direct limit of finite dimensional
 $Z\times \cH^{\sph}$-modules.
Therefore the $(\theta,\lambda)$-eigenspace in $J_{\cB}( H^{n}(\hat K^{\gp}, W'))\otimes_{E} \bar \Qp$ is non-zero.
By Corollary \ref{loc-alg-jacquet} we deduce that the $(\theta\psi,\lambda)$-eigenspace
in $J_{\cB}(H^{n}(\hat K^{\gp}, W')\otimes W)$ is non-zero.
By the edge map criterion,
 the representation $H^{n}(\hat K^{\gp}, W')\otimes W$ is isomorphic to
the closed subspace of $W$-locally algebraic vectors in $V$.
By left-exactness, we deduce that the $(\theta\psi,\lambda)$-eigenspace in $J_{\cB}(V)$ is non-zero.
This implies that $(\theta\psi,\lambda)$ is in $\Eig(V)$.
\end{proof}

\begin{theorem}
 Let $(\theta\psi,\lambda)$ be a point of $\Eig(V)$, where $\theta$ is locally constant and $\psi$ is the highest weight of some algebraic representation $W$ of $\cG$. If $\chi = \theta\psi$ has non-critical slope, then $(\theta\psi, \lambda)$ is a classical point.
\end{theorem}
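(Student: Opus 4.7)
The plan is to combine the small-slopes theorem (Theorem~\ref{thm:smallslopes}) with the edge map criterion and the computation of the Jacquet functor on smooth representations (Corollary~\ref{loc-alg-jacquet}), in order to identify the given eigenspace in $J_{\cB}(V)$ with an eigenspace in the smooth Jacquet module of a classical cohomology group.

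First, I would unwind the definitions: $(\theta\psi,\lambda) \in \Eig(V)$ is by construction equivalent to the joint eigenspace $J_{\cB}(V)[Z = \theta\psi,\ \cH^{\sph} = \lambda]$ being non-zero. Since $V = \tH^{n}_{\varepsilon,\la}$ sits inside the Banach representation $\tH^{n}_{\varepsilon}$, which carries a $\tilde\scG$-invariant norm coming from its integral $\cO_{E}$-lattice of completed cohomology classes, $V$ admits a $\tilde\scG$-invariant norm and Theorem~\ref{thm:smallslopes} is applicable. The character $\chi := \theta\psi$ is by assumption locally $\psi$-algebraic, and $\psi$ is precisely the weight by which $Z$ acts on the one-dimensional $\tilde\scT$-representation $W^{\cN}$; combined with the fact that $Z$ has finite index in $\tilde\scT$ (an easy consequence of Lemma~\ref{finite-index-centre}), this implies that the full $\chi$-eigenspace for $Z$ in $J_{\cB}(V)$ lies in the $W^{\cN}$-locally algebraic subspace. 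The non-critical slope hypothesis then lets me invoke Theorem~\ref{thm:smallslopes} to obtain
\[
        J_{\cB}(V_{W-\lalg})[Z = \chi,\ \cH^{\sph} = \lambda] \xrightarrow{\sim} J_{\cB}(V)[Z = \chi,\ \cH^{\sph} = \lambda],
\]
reducing the problem to the study of the locally algebraic subrepresentation.

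Next, the edge map criterion applied with $W$ replaced by $W'$ identifies $V_{W-\lalg}$ with $W \otimes H^{n}_{\cl,\varepsilon}(\hat K^{\gp}, W')$ as a $\tilde\scG \times \cH^{\gp}$-representation, and Corollary~\ref{loc-alg-jacquet} then computes
\[
        J_{\cB}(V_{W-\lalg}) \cong H^{n}_{\cl,\varepsilon}(\hat K^{\gp}, W')_{\hat\scN} \otimes \psi.
\]
Taking $(Z = \theta\psi,\ \cH^{\sph} = \lambda)$-eigenspaces of both sides, the algebraic factor $\psi$ cancels the algebraic part of $\chi$, so I conclude that the smooth Jacquet module $H^{n}_{\cl,\varepsilon}(\hat K^{\gp}, W')_{\hat\scN}$ has a non-zero $(Z = \theta,\ \cH^{\sph} = \lambda)$-eigenspace. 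Picking an absolutely irreducible subquotient $\pi = \pi_{\gp} \otimes \pi^{\gp}$ of $H^{n}_{\cl,\varepsilon}(\hat K^{\gp}, W')$ realising this eigenvalue and applying smooth Frobenius reciprocity, $\pi_{\gp}$ embeds in $\ind_{Z\hat\scN}^{\tilde\scG}(\theta)$; hence $(\theta\psi,\lambda)$ is a classical point in the sense of the preceding theorem.

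Since the substantive analytic content has been packaged into Theorem~\ref{thm:smallslopes}, the main obstacle is really bookkeeping. I expect the delicate points to be verifying that the full $\chi$-eigenspace (and not merely a subspace of it) is $W^{\cN}$-locally algebraic, so that the small-slopes isomorphism captures the entire eigenspace; and correctly tracking the interchange of $W$ and $W'$ in the edge map criterion so that the algebraic twist $\psi$ coming out of Corollary~\ref{loc-alg-jacquet} cancels the algebraic part of $\chi$, leaving the purely smooth character $\theta$ on the smooth-Jacquet side.
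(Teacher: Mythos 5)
Your proposal is correct and takes essentially the same approach as the paper: the paper's own proof is a one-liner, observing that $V = \tH^n_{\varepsilon}(\hat K^{\gp}, E)_{\Qp-\la}$ admits a $\tilde\scG$-invariant norm (the gauge of the integral lattice) so that Theorem~\ref{thm:smallslopes} applies, and declaring the rest immediate. Your expansion fills in exactly the bookkeeping the paper leaves implicit --- the observation that the full $\chi$-eigenspace for $Z$ is automatically $W^{\cN}$-locally algebraic because $Z$ has finite index in $\tilde\scT$, the use of the edge map criterion and Corollary~\ref{loc-alg-jacquet} to identify the locally-algebraic Jacquet module with the smooth Jacquet module of classical cohomology, and the extraction of a classical point --- and these details are correct.
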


\begin{proof}
This is immediate from Theorem \ref{thm:smallslopes}, since $V = \tH^n_{\varepsilon}(\hat K^{\gp}, E)_{\Qp-\la}$ admits a $\tilde\scG$-invariant norm given by the gauge of the lattice $\tH^n_{\varepsilon}(\hat K^{\gp}, \cO_E) \subset \tH^n_{\varepsilon}(\hat K^{\gp}, E)$.
\end{proof}

\section{A \texorpdfstring{$p$}{p}-adic analytic Stone--von Neumann theorem}
\label{sect:stonevonneumann}

If the machinery of the previous section is applied in the case where the parabolic subgroup $\cP$ is a Borel subgroup, the Jacquet module is a representation of the preimage in $\tilde \scG$ of a maximal torus in $\scG$. This is a topological central extension of a commutative group, but need not itself be commutative. We therefore turn to the question of classifying locally analytic representations of metaplectic tori. 

In this section, we shall let $\scT$ be any topologically finitely generated abelian $\Qp$-analytic group. We define the \emph{rank} of $\scT$ to be the rank of the finitely-generated abelian group $\scT / \scT_0$, for any compact open subgroup $\scT_0 \subseteq \scT$. Let $\tilde\scT$ be a topological central extension of $\scT$ by $\mu$, so there is an exact sequence of topological groups
\[ 1 \to \mu \to \tilde\scT \to \scT \to 1.\]
Let us fix a character $\varepsilon: \mu \to E^\times$, where $E$ is (as above) a discretely valued closed subfield of $\mathbb{C}_p$. We shall restrict our attention to representations of $\tilde\scT$ on which $\mu$ acts via $\varepsilon$.
Without loss of generality, we suppose that $\varepsilon$ is injective, and in particular $\mu$ is cyclic.

Let ${Z}$ be the centre of $\tilde\scT$. This contains $\mu$, and therefore is the preimage of a subgroup $\scZ \subseteq \scT$. The group $\tilde\scT / {Z} \cong \scT / \scZ$ is a finite abelian group, of exponent dividing the order of $\mu$.
This quotient group is equipped with a non-degenerate, alternating bilinear form 
\[
        \Lambda^{2}(\tilde\scT / {Z})\to \mu
\]
given by $t\wedge u \mapsto [t,u]$; in particular, the index $[\scT : \scZ]$ is a square. If $\scA$ is a subgroup of $\scT$ containing $\scZ$, then the preimage $\tilde \scA$ is abelian if and only if $\scA / \scZ$ is an isotropic subspace of $\scT / \scZ$. We shall abuse notation and call such subgroups \emph{isotropic subgroups} of $\scT$.

For a locally $p$-adic analytic group $\scG$ whose centre is topologically finitely generated (such as all of the groups considered above), we let $\Repess(\scG)$ denote the category of essentially admissible locally analytic representations of $\scG$. Note that if $\scG$ is commutative, then $\Repess(\scG)$ is the opposite category of the category of coherent sheaves on the rigid-analytic space $\widehat \scG$.
For $\scG$ a subgroup of $\tilde\scT$ containing $\mu$, we let $\Repess(\scG)_\varepsilon$ denote the subcategory of representations on which $\mu$ acts via the character $\varepsilon$. If $\scG$ is commutative, then this category is anti-equivalent to the coherent sheaves on a subspace $\widehat{\scG}_\varepsilon \subseteq \widehat{\scG}$, which is a union of components of $\widehat{\scG}$.

\subsection{Irreducible representations}

We begin with a weak form of the Stone--von Neumann theorem, classifying the irreducible objects of $\Repess(\tilde\scT)_\varepsilon$. Let us choose a maximal isotropic subgroup $\scA \subseteq \scT$, as above.

\begin{lemma}\label{characters}
 Let $\chi$ be a continuous (hence locally $\Qp$-analytic) character of ${Z}$ restricting to $\varepsilon$, $I_\chi$ the set of $\overline{E}$-valued characters of $\tilde \scA$ extending $\chi$, and $\psi \in I_\chi$. Then the map $\tilde\scT / \tilde \scA \to I_\chi$ mapping $t$ to the character $a \mapsto \psi(t^{-1} a t)$ is a bijection. (In other words, $I_\chi$ is a torsor for $\tilde\scT / \tilde \scA$).
\end{lemma}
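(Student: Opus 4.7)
The plan is to exploit the commutator pairing $[\cdot,\cdot]\colon (\tilde\scT/Z) \times (\tilde\scT/Z) \to \mu$, which by hypothesis is a non-degenerate alternating form on the finite abelian group $\scT/\scZ$, together with the standard symplectic fact that a maximal isotropic subgroup $\scA/\scZ$ of such a form is self-orthogonal, giving $|\scA/\scZ|^{2}=|\scT/\scZ|$. (Self-orthogonality follows from maximality: if $\scA/\scZ \subsetneq (\scA/\scZ)^{\perp}$, then any extra element together with $\scA/\scZ$ would generate a strictly larger isotropic subgroup, because alternating pairings vanish on the diagonal.)

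First I would check well-definedness. Since $\scT$ is abelian, $\tilde\scA$ is normal in $\tilde\scT$, so $t^{-1}at \in \tilde\scA$ for all $t \in \tilde\scT$, $a \in \tilde\scA$. The commutator $[a,t] = a^{-1}t^{-1}at$ lies in $\mu$ (its image in $\scT$ vanishes), and a direct computation using centrality of $Z$ shows that $[a,t]$ depends only on the classes of $a$ and $t$ modulo $Z$. Writing $t^{-1}at = a\cdot[a,t]$, we obtain
\[
        \psi(t^{-1}at) = \psi(a)\,\varepsilon([a,t]),
\]
which is visibly a character of $\tilde\scA$ extending $\chi$ and depends only on the image of $t$ in $\tilde\scT/\tilde\scA$ (since $\tilde\scA$ is abelian).

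Next I would prove injectivity by identifying $\tilde\scA$ with its own centraliser in $\tilde\scT$. If $t,t' \in \tilde\scT$ yield the same character, the formula above forces $\varepsilon([a, t(t')^{-1}]) = 1$ for all $a \in \tilde\scA$. Injectivity of $\varepsilon$ then implies $[a, t(t')^{-1}] = 1$ for all $a$, so the image of $t(t')^{-1}$ in $\scT/\scZ$ lies in $(\scA/\scZ)^{\perp} = \scA/\scZ$, hence $t(t')^{-1} \in \tilde\scA$.

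Surjectivity then follows by a cardinality count. The set $I_\chi$ is a torsor over $\Hom(\tilde\scA/Z,\overline{E}^{\times}) = \Hom(\scA/\scZ,\overline{E}^{\times})$, which has cardinality $|\scA/\scZ|$ because the exponent of $\scA/\scZ$ divides $|\mu|$ and $\overline{E}^{\times}$ contains all roots of unity of order dividing $|\mu|$. On the other side, $|\tilde\scT/\tilde\scA| = |\scT/\scA| = |\scA/\scZ|$ by the symplectic identity. The one real input here — more an ingredient than an obstacle — is the self-orthogonality of maximal isotropic subgroups, which is entirely classical; everything else is a routine computation with central extensions.
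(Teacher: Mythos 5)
Your proof is correct and follows essentially the same two-step plan as the paper: injectivity by showing that the kernel of $t \mapsto \psi_t$ is exactly $\tilde\scA$ (you phrase this via the self-orthogonality $(\scA/\scZ)^\perp = \scA/\scZ$, the paper phrases it as $\tilde\scA$ being a maximal abelian subgroup of $\tilde\scT$ — these are the same fact), and surjectivity by the cardinality count $|\tilde\scT/\tilde\scA| = |\scA/\scZ| = |I_\chi|$ using the symplectic structure on $\scT/\scZ$. The explicit commutator formula $\psi(t^{-1}at) = \psi(a)\,\varepsilon([a,t])$ is a nice clarifying addition that the paper leaves implicit, but it does not change the underlying argument.
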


(Note that any $\psi \in I_{\chi}$ is defined over a finite, and hence complete and discretely valued, extension of $E$. Moreover, since ${Z}$ is open in $\tilde \scT$, any such $\psi$ is locally $\Qp$-analytic.)

\begin{proof}
Let $\psi_{t}$ be the character defined by $\psi_{t}(a)=\psi(t^{-1}a t)$.
We shall first show that the map $t \mapsto \psi_{t}$ is injective on $\tilde \scT/ \tilde \scA$.
Suppose $\psi_{t}=\psi_{u}$ for $t,u \in \tilde\scT$. We need to show that $t^{-1}u \in \tilde A$.
By definition we have for all $a\in \tilde A$:
\[
        \psi(t^{-1}at)=\psi(u^{-1}au).
\]
Hence the element $t^{-1}a t u^{-1}a^{-1}u$ is in the kernel of $\psi$.
Since $\scT$ is abelian, the image of this element in $\scT$ is the identity, so
$t^{-1}a t u^{-1}a^{-1}u$ is also in $\mu_{m}$.
Since the restriction of $\psi$ to $\mu_{m}$ is $\varepsilon$, which is assumed to be injective,
we deduce that $t^{-1}at=u^{-1}au$.
In other words $ut^{-1}$ commutes with every element $a\in \tilde \scA$. As $\tilde \scA$ is a maximal abelian subgroup, we must therefore have $ut^{-1} \in \tilde \scA$.

To prove surjectivity, we'll show that $\tilde \scT/ \tilde \scA$ and $I_{\chi}$ have the same number of elements.
The number of elements in $I_{\chi}$ is $|\tilde \scA / {Z}|$, which is
the same as $|\scA/\scZ|$. Since $\scA / \scZ$ is a maximal isotropic subspace of $\scT / \scZ$ with respect to the skew-symmetric form above, $\scT/\scA$ is identified with the Pontryagin dual of $\scA/\scZ$ and hence $|\scT / \scA| = |\scA / \scZ|$.
\end{proof}

\begin{proposition}\label{prop:vchi}
 Let $\tilde \scT$ be a topological central extension by $\mu$
 of a topologically finitely generated abelian locally $\Qp$-analytic group
 $\scT$, and let ${Z}$ be the centre of $\tilde \scT$. Fix an injective character
 $\varepsilon$ of $\mu$.

 Then for every locally $\Qp$-analytic $E$-valued character $\chi$ of ${Z}$ extending $\varepsilon$, there is a unique irreducible finite-dimensional locally $\Qp$-analytic representation $V_\chi$ of $\tilde\scT$ on an $E$-vector space having central character $\chi$. 
\end{proposition}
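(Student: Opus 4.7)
The plan is to identify $V_\chi$ as the unique simple module over a suitable twisted group algebra. Since $\varepsilon$ is injective and the commutator pairing on $\scT/\scZ$ is non-degenerate, $\scT/\scZ$ has exponent dividing $|\mu|$ and order $d^2$ for some integer $d$; in particular $E$ already contains all $|\mu|$-th roots of unity. I would fix a maximal isotropic subgroup $\scA \subseteq \scT$, so $[\scA : \scZ] = d$ and $\tilde\scA$ is a maximal abelian subgroup of $\tilde\scT$, normal in $\tilde\scT$ (being the preimage of a subgroup of the abelian group $\scT$).

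First I would produce an absolutely irreducible candidate over a finite extension. Since $\bar E^\times$ is divisible, the character $\chi$ extends to some $\psi \in I_\chi$, defined over a finite Galois extension $E'/E$, so Lemma \ref{characters} applies. Form $V' := \Ind_{\tilde\scA}^{\tilde\scT}(\psi)$, an $E'$-vector space of dimension $d$ on which $\tilde\scT$ acts with ${Z}$ acting by $\chi$. Absolute irreducibility follows from Frobenius reciprocity: Mackey gives $V'|_{\tilde\scA} = \bigoplus_{t \in \tilde\scT/\tilde\scA} \psi^t$ (since $\tilde\scA$ is normal), so
\[
\End_{\tilde\scT}(V') = \bigoplus_{t \in \tilde\scT/\tilde\scA} \Hom_{\tilde\scA}(\psi, \psi^t),
\]
and Lemma \ref{characters} forces $\psi^t \neq \psi$ for $t \notin \tilde\scA$, collapsing the sum to a single copy of $E'$.

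Next I would descend to $E$ via the twisted group algebra
\[
A_\chi := E[\tilde\scT] / (z - \chi(z) : z \in {Z}),
\]
a finite-dimensional $E$-algebra spanned by representatives of the $d^2$ cosets of ${Z}$ in $\tilde\scT$, so $\dim_E A_\chi \le d^2$. Since $V' \otimes_{E'} \bar E$ is a faithful absolutely irreducible module for $A_\chi \otimes_E \bar E$, Burnside's theorem produces a surjection $A_\chi \otimes_E \bar E \twoheadrightarrow M_d(\bar E)$; the dimensions match, so this is an isomorphism, and $A_\chi$ is a central simple $E$-algebra of degree $d$. Wedderburn then gives $A_\chi \cong M_n(D)$ for some central division $E$-algebra $D$ with $n \deg(D) = d$, whose unique (up to isomorphism) simple module I take to be $V_\chi$.

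Uniqueness is now immediate: any finite-dimensional irreducible $E$-representation of $\tilde\scT$ with central character $\chi$ is, viewed as an $A_\chi$-module, simple, and hence isomorphic to $V_\chi$ by Wedderburn. Local analyticity follows because ${Z}$ has finite index in $\tilde\scT$ and $\chi$ is locally analytic, so on each ${Z}$-coset the representation is a translate of the scalar action by $\chi$. The main obstacle is the step establishing $\dim_E A_\chi = d^2$, which rests crucially on the absolute irreducibility of the induced representation $V'$ together with Burnside. Note that $A_\chi$ need not be split over $E$ (the classical $\tilde\scT = Q_8$, $E = \R$ example yields $A_\chi = \mathbb{H}$ and $\dim_E V_\chi = 4 > d = 2$), but Wedderburn nevertheless guarantees existence and uniqueness of a simple module.
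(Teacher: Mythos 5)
Your proof is correct and reaches the same conclusion (central simplicity of $A_\chi$, then Wedderburn), but the key step is organized differently from the paper's. The paper never constructs a module explicitly: instead it views $A_\chi \otimes_E E'$ as a $\tilde\scA \times \tilde\scA$-bimodule via left and right translation, decomposes it into one-dimensional isotypical pieces $A^{(\psi_1,\psi_2)}_\chi$ indexed by $I_\chi \times I_\chi$, and shows any nonzero two-sided ideal must meet some isotypical piece and hence (by transitivity of the $\tilde\scT$-conjugation on $I_\chi$, i.e.\ Lemma \ref{characters}) all of them, forcing it to be everything. You instead build the absolutely irreducible module $V' = \Ind_{\tilde\scA}^{\tilde\scT}(\psi)$ over an extension, verify absolute irreducibility via the Mackey decomposition $V'|_{\tilde\scA} = \bigoplus_t \psi^t$ together with the distinctness of the $\psi^t$, and then apply Burnside plus a dimension count to identify $A_\chi \otimes_E \bar E \cong M_d(\bar E)$. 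Both routes lean on the same torsor lemma; yours has the virtue of producing $V_\chi$ concretely as an induced representation (and the Burnside step gives centrality of $A_\chi$ directly, which the paper asserts without much comment), while the paper's route is more self-contained and avoids any appeal to Mackey theory. One small point to tighten: ``$\End_{\tilde\scT}(V') = E'$'' alone does not formally imply absolute irreducibility; the cleaner deduction from your Mackey decomposition is that any nonzero $\tilde\scT$-submodule of $V' \otimes \bar E$ is an $\tilde\scA$-submodule, hence contains some $\psi^t$, hence by transitivity of conjugation contains all of them --- so $V'$ is absolutely irreducible. That is the argument you want before invoking Burnside.
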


\begin{proof}
 Let $A_\chi$ be the algebra
 \[ E[\tilde \scT] / \langle z - \chi(z) : z \in {Z}\rangle.\]
 This is a finite-dimensional $E$-algebra of dimension $d^2$, where $d = |\scT / \scA| = |\scA / \scZ|$, and it is clear that any representation of $\tilde\scT$ on which ${Z}$ acts via $\chi$ is a module over $A_\chi$. Note that since ${Z}$ is open and has finite index in $\tilde\scT$, such a representation is essentially admissible if and only if it is finite-dimensional over $E$, or equivalently finitely-generated over $A_\chi$; thus the essentially admissible representations of $\tilde\scT$ with central character $\chi$ are precisely the finitely-generated $A_\chi$-modules.
 
 We claim that $A_\chi$ is a central simple $E$-algebra. It suffices to check this after any finite base extension, so let us choose a finite extension $E' / E$ sufficiently large that all characters $\psi \in I_\chi$ are defined over $E'$. 
 
 We consider $A_\chi$ as a representation of $\tilde \scA \times \tilde \scA$ via right and left translation. If $E' / E$ is a finite extension sufficiently large that all $\psi \in I_\chi$ have values in $E'$, we may decompose $A_\chi' = A_\chi \otimes_E E'$ as a direct sum of isotypical components $A^{(\psi_1, \psi_2)}_\chi$ for the action of $\tilde \scA \times \tilde \scA$, indexed by pairs $(\psi_1, \psi_2) \in I_\chi \times I_\chi$, and any $\tilde\scA \times \tilde\scA$-stable subspace of $A_\chi'$ is equal to the direct sum of its intersections with these isotypical subspaces.
 
So let $S$ be any two-sided ideal in $A_\chi'$.
It follows that $S$ is $\tilde\scA\times \tilde \scA$-invariant,
and the action of $\tilde\scA\times \tilde \scA$ on $S$ is obviously diagonalizable.
Hence if $S$ is non-zero, then it must have nontrivial intersection with $A^{(\psi_1, \psi_2)}_\chi$
for some pair $(\psi_1, \psi_2)$.
However, since $S$ is a two-sided ideal, and conjugation by $\tilde \scT$ permutes $I_\chi$ transitively, we deduce that it must have nontrivial intersection with \emph{all} of the subspaces $A^{(\psi_1, \psi_2)}_\chi$. Hence its dimension is at least $d^2$, which is the dimension of $A_\chi'$; thus $S = A_\chi'$. So we have shown that $A_\chi'$ is simple, from which it follows that $A_\chi$ is simple.
 
 Since $A_\chi$ is a central simple algebra, up to isomorphism there is a unique simple left $A_\chi$-module; and we define $V_\chi$ to be this representation.
\end{proof}

\begin{remark}
        Note that the dimension of $V_\chi$ is equal to $de$, where $e$ is the order of $A_\chi$ in the Brauer group of $E$.
        In particular, $V_{\chi}$ is absolutely irreducible if and only if $A_\chi$ is isomorphic to a matrix algebra,
        so $e = 1$.
\end{remark}

\begin{proposition}\label{prop:homnonzero}
 Let $W$ be any essentially admissible locally $\Qp$-analytic representation of $\tilde \scT$ on which $\mu$ acts via $\varepsilon$, and let $V_\chi$ be the representation constructed above. Then for any character $\chi: {Z} \to E^\times$, we have
 \[ \Hom_{{Z}}(\chi, W) \ne 0 \Leftrightarrow \Hom_{\tilde \scT}(V_\chi, W) \ne 0.\]
\end{proposition}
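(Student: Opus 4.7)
The plan is to reduce everything to module theory over the finite-dimensional central simple $E$-algebra $A_\chi = E[\tilde\scT]/\langle z - \chi(z) : z \in {Z}\rangle$ constructed in the proof of Proposition \ref{prop:vchi}.

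First the easy direction ($\Leftarrow$). By construction $V_\chi$ is a module over $A_\chi$, so in particular ${Z}$ acts on $V_\chi$ by the character $\chi$. Hence any nonzero $\tilde\scT$-equivariant map $V_\chi \to W$ yields, by restriction to ${Z}$ and composition with any nonzero linear functional on $V_\chi$, a nonzero element of $\Hom_{{Z}}(\chi, W)$.

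For the converse ($\Rightarrow$), consider the $\chi$-eigenspace
\[
        W[\chi] = \{w \in W : zw = \chi(z) w \text{ for all } z \in {Z}\},
\]
which is canonically identified with $\Hom_{{Z}}(\chi, W)$ and hence is nonzero by hypothesis. Because ${Z}$ is central in $\tilde\scT$, the subspace $W[\chi]$ is stable under $\tilde\scT$: for $t \in \tilde\scT$, $w \in W[\chi]$ and $z \in {Z}$ we have $z(tw) = t(zw) = \chi(z) tw$. Thus $W[\chi]$ is a $\tilde\scT$-subrepresentation on which ${Z}$ acts through $\chi$, i.e.\ a left $A_\chi$-module. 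By Proposition \ref{prop:vchi} (more precisely the argument given in its proof), $A_\chi$ is a finite-dimensional central simple $E$-algebra, hence semisimple Artinian, and $V_\chi$ is up to isomorphism its unique simple left module. Every nonzero module over a semisimple Artinian ring contains a simple submodule, so $W[\chi]$ contains a submodule isomorphic to $V_\chi$. The resulting inclusion $V_\chi \hookrightarrow W[\chi] \hookrightarrow W$ gives a nonzero element of $\Hom_{\tilde\scT}(V_\chi, W)$.

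I do not expect a substantive obstacle here. The only point worth a brief comment is continuity: since $V_\chi$ is finite-dimensional (Proposition \ref{prop:vchi}) and $W$ is Hausdorff locally convex, every $\tilde\scT$-equivariant linear map $V_\chi \to W$ is automatically continuous, so there is no distinction between abstract and topological $\Hom$'s in the statement. The key input is purely algebraic, namely the central-simplicity of $A_\chi$ established in the previous proposition.
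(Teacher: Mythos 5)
Your proof is correct and follows essentially the same route as the paper's: both reduce to the observation that the $\chi$-eigenspace of $W$ is a module over the central simple algebra $A_\chi$, whose unique simple left module is $V_\chi$. The paper first uses essential admissibility to see that this eigenspace is finite-dimensional, hence finitely generated over $A_\chi$, before invoking structure theory, whereas you bypass that step by observing that any nonzero module over a semisimple ring already contains a simple submodule --- a touch cleaner. One small slip: in the easy direction, ``composition with any nonzero linear functional on $V_\chi$'' should read something like ``precomposition with a $Z$-equivariant embedding $\chi \hookrightarrow V_\chi$'' (or simply: a nonzero vector in the image of a nonzero equivariant map $V_\chi \to W$ is a $\chi$-eigenvector), but the intended trivial argument is clear.
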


\begin{proof}
 By replacing $W$ with the closed $\tilde\scT$-stable subspace $W^{{Z} = \chi}$, it suffices to show that if ${Z}$ acts on $W$ via $\chi$, then there is a nonzero homomorphism of $\tilde\scT$-representations $V_\chi \to W$. But since $W$ is essentially admissible as a representation of ${Z}$, it must be finite-dimensional, and thus finitely-generated as an $A$-module where $A$ is the central simple algebra constructed above. Since every finitely-generated module over a central simple algebra is a direct sum of copies of the unique simple module, it follows that $\Hom_{A}(V_\chi, W) = \Hom_{\tilde\scT}(V_\chi, W)$ is nonzero.
\end{proof}

We now extend the definition of $V_\chi$ slightly. The continuous characters $\chi: {Z} \to E^\times$ extending $\varepsilon$ are precisely the absolutely irreducible objects of the category $\Repess({Z})_\varepsilon$. If $F / E$ is a finite extension and $\chi: {Z} \to F^\times$ is a continuous character, we regard $\chi$ as an object of $\Repess({Z})_{\varepsilon}$ via restriction of scalars. If the values of $\chi$ generate $F$ over $E$, this representation is irreducible (but not absolutely so, of course, unless $F = E$). This gives us a bijection between each of the following sets:
\begin{itemize}
 \item irreducible objects of $\Repess({Z})_\varepsilon$,
 \item points of the rigid space $\widehat{{Z}}_\varepsilon$ (in the sense of rigid geometry, i.e.~maximal ideals of its structure sheaf),
 \item $\Gal(\overline{E} / E)$-orbits of characters $\chi: {Z} \to \overline{E}^\times$ extending $\varepsilon$.
\end{itemize}

For each Galois orbit of characters $\chi$, applying Proposition \ref{prop:vchi} with the coefficient field $E$ replaced by the finite extension $F/E$ generated by the values of $\chi$, we see that there is a unique irreducible $F$-linear representation $V_\chi$ of $\tilde\scT$ with central character $\chi$; and since the values of $\chi$ generate $F$ over $E$, this is still irreducible when regarded as an $E$-linear representation via restriction of scalars. We define $V_\chi$ to be this representation, which clearly depends only on the $\Gal(\overline{E} / E)$-orbit of $\chi$. As a representation of ${Z}$, $V_\chi$ is isomorphic to a direct sum of copies of the object of $\Repess({Z})_{\varepsilon}$ constructed from $\chi$ above; we shall abuse notation slightly by writing ``$V_\chi$ has central character $\chi$'' even when $\chi$ is not defined over $E$.

\begin{corollary}\mbox{~}
        \begin{enumerate}
                \item[(a)]
                The map $\chi \mapsto V_\chi$ is a bijection between the set of irreducible objects of the categories $\Repess({Z})_\varepsilon$ and $\Repess(\tilde\scT)_\varepsilon$, which is uniquely characterised by the fact that $V_\chi$ has central character $\chi$.
                \item[(b)]
                For any $W \in \Repess(\tilde\scT)_\varepsilon$, there is a closed rigid-analytic subvariety $X$ of the character space
                $\widehat{{Z}}_\varepsilon$ such that the irreducible subrepresentations of $W$ are precisely the $V_\chi$ for $\chi \in X$.
                \item[(c)] The irreducible representation $V_\chi$ is absolutely irreducible if and only if $\chi$ is defined over $E$ and the central simple algebra $A_\chi$ of Proposition \ref{prop:vchi} is trivial in the Brauer group of $E$. 
        \end{enumerate}
\end{corollary}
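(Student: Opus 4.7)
The plan is to combine Propositions \ref{prop:vchi} and \ref{prop:homnonzero} with the antiequivalence between $\Repess({Z})_\varepsilon$ and the category of coherent sheaves on $\widehat{{Z}}_\varepsilon$ recalled above.

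For part (a), the map $\chi \mapsto V_\chi$ is well-defined by Proposition \ref{prop:vchi}, and injective because the central character of $V_\chi$ recovers the $\Gal(\overline{E}/E)$-orbit of $\chi$. For surjectivity, let $W$ be irreducible in $\Repess(\tilde\scT)_\varepsilon$. Since $[\tilde\scT : {Z}]$ is finite, $W$ is essentially admissible as a representation of ${Z}$, and so corresponds to a nonzero coherent sheaf on $\widehat{{Z}}_\varepsilon$. Choose a closed point $\chi$ in its support, which is defined over some finite extension $F/E$; then $W \otimes_E F$ has a nonzero $\chi$-eigenspace. Proposition \ref{prop:homnonzero}, applied with coefficient field $F$, yields a nonzero $F$-linear $\tilde\scT$-equivariant map $V_\chi^F \to W \otimes_E F$, where $V_\chi^F$ denotes the $F$-linear representation constructed in Proposition \ref{prop:vchi}. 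Restricting scalars and composing with a coordinate projection $W \otimes_E F \cong W^{[F:E]} \to W$ gives a nonzero $E$-linear $\tilde\scT$-equivariant map $V_\chi \to W$ (recalling that $V_\chi$ is the restriction of scalars of $V_\chi^F$); irreducibility on both sides then forces this map to be an isomorphism.

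Part (b) follows from the same observations: the irreducible subrepresentations of $W$ are precisely those $V_\chi$ with $\Hom_{\tilde\scT}(V_\chi, W) \neq 0$, which by Proposition \ref{prop:homnonzero} coincides with the set of $\chi$ lying in the support of the coherent sheaf on $\widehat{{Z}}_\varepsilon$ attached to $W$. This support is a closed rigid-analytic subvariety by essential admissibility, giving the desired $X$.

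For part (c), I would base change to $\overline{E}$. If $\chi$ is not defined over $E$, then the distinct Galois conjugates of $\chi$ all occur as central characters in $V_\chi \otimes_E \overline{E}$, producing a non-trivial isotypic decomposition that obstructs absolute irreducibility. If $\chi$ is defined over $E$, then $V_\chi \otimes_E \overline{E}$ is the unique simple module over $A_\chi \otimes_E \overline{E} \cong M_d(\overline{E})$, so has $\overline{E}$-dimension $d$. By the remark following Proposition \ref{prop:vchi}, $\dim_E V_\chi$ equals $d$ exactly when $A_\chi$ itself is a matrix algebra, that is, when $A_\chi$ is trivial in the Brauer group of $E$. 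The main subtlety lies in the base-change step in (a): ensuring that an $F$-linear nonzero map descends to an $E$-linear nonzero map between irreducible $E$-representations, which is handled by the coordinate projection trick described above.
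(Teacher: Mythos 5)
Your proof is correct and takes essentially the same route as the paper. The paper's own argument is terser: for (a) it simply invokes Proposition \ref{prop:homnonzero} on $\Rest^{\tilde\scT}_{Z}(W)$ and concludes $W \cong V_\chi$ by irreducibility; for (b) it takes $X$ to be the support of the sheaf attached to $\Rest^{\tilde\scT}_{Z}(W)$; and for (c) it says the claim is immediate from the remark on the dimension of $V_\chi$. Your version makes explicit the base-change bookkeeping that the paper leaves implicit -- namely that a character $\chi$ in the support may be defined only over a finite extension $F/E$, that Proposition \ref{prop:homnonzero} must first be applied over $F$, and that the resulting map descends to a nonzero $E$-linear map via a coordinate projection $W \otimes_E F \cong W^{[F:E]} \to W$; likewise in (c) you make explicit why failure of $\chi$ to be defined over $E$ obstructs absolute irreducibility. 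These are exactly the points the paper's construction (the paragraph extending the definition of $V_\chi$ to Galois orbits) is meant to cover, so the two arguments match.
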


\begin{proof}
 For part (a), the fact that the map exists and is injective is clear; so it suffices to check that it is surjective. Let $W$ be an irreducible object in $\Repess(\tilde\scT)_\varepsilon$. Applying Proposition \ref{prop:homnonzero} to $\Rest^{\tilde\scT}_{{Z}}(W)$, we see that there is some $\chi$ such that $\Hom_{\tilde\scT}(V_\chi, W) \ne 0$. Since $W$ is irreducible, we must have $W \cong V_\chi$.

For part (b), we simply take $X$ to be the support of the sheaf on $\widehat{{Z}}$ corresponding to the locally analytic representation $\Rest_{{Z}}^{\tilde \scT}(W)$ of ${Z}$. By construction, the points of $X$ are precisely the $\chi$ such that $\Hom_{{Z}}(\chi, W) \ne 0$, and Proposition~\ref{prop:homnonzero} shows that these are precisely the irreducible $\tilde\scT$-subrepresentations of $W$.

Part (c) is immediate from the construction of $V_\chi$ and the remark on its dimension above.
\end{proof}

\subsection{Tame isotropic subgroups}

 We now show that the results of the previous subsection can be strengthened, under a mild additional hypothesis on $\tilde \scT$.

\begin{definition}
        Let $\scA \supset \scZ$ be a maximal isotropic subgroup.
        We say $\scA$ is \emph{tame} if $\scZ \scA_{\tors} = \scA$,
        where $\scA_{\tors}$ is the torsion subgroup of $\scA$.
\end{definition}

We have used the word ``tame'' to indicate the analogy to the tame symbol, as the
following example illustrates.

\begin{example}
        Let $k_{\gp}$ be a finite extension of $\Qp$ containing a primitive $m$-th root of unity.
        Let $\scT = k_\gp^\times$ and let $\mu_m$ be the group of all $m$-th roots of unity in $k_{\gp}$.
        Then we may define an extension $\tilde\scT$ of $\scT$ by $\mu_m$ by setting
        \[
                \tilde\scT
                =
                \{ (x, \zeta) : x \in k_\gp^{\times}, y \in \mu_m\}
        \]
        with the group law
        \[
                (x, \zeta) (y, \xi) = (xy, \zeta \xi (x, y)_{m}),
        \]
        where $(x, y)_m$ is the $m$-th power Hilbert symbol in $k_{\gp}$.
        
        One finds that
        \[
                \scZ
                =
                \{ x \in k_{\gp}^{\times} : x^2 \in k_{\gp}^{\times m}\}.
        \]
        Let us define $n = m$ if $m$ is odd, and $n = m/2$ if $m$ is even.
        Then, since we clearly have $\mu_{2n} \subseteq k_{\gp}^{\times}$
        and hence $-1 \in k_{\gp}^{\times n}$, we see that $\scZ = k_{\gp}^{\times n}$.
        
        If $n$ is not a multiple of $p$, then we have $[\scT : \scZ] = [k_{\gp}^\times : k_{\gp}^{\times n}] = n^2$. 
        If we take $\scA = \pi^{\Z } \cO^{\times n}$, where $\cO$ is the ring of integers of $k_\gp$ and $\pi$
        is a uniformizer, then $\scA/\scZ$ is isotropic (as any lift of $\pi$ to $\tilde\scT$ clearly commutes
        with itself). As $[\scA : \scZ] = n$, $\scA$ is maximal.

        Since we have assumed $p$ does not divide $n$, we have $1 + \pi \cO \subseteq \cO^{\times n}$;
        hence every element of $\scA / \scZ \cong \cO^{\times} / \cO^{\times n}$ has a representative that is a
        Teichmuller lift of an element of the residue field, and thus in particular lies in $\scA_{\tors}$.
        Hence $\scA$ is tame.
        On the other hand, the (possibly more natural) choice $\scA' = \pi^{n\Z } \cO^\times$
        is a maximal isotropic subgroup which is not tame.
        
        If on the other hand $p$ is a factor of $n$, then there is no tame maximal isotropic subgroup.
\end{example}

It is clear that if $p \nmid m$ and the rank of $\scT$ is 0 or 1, there must exist a tame maximal isotropic subgroup. On the other hand, if $\scT$ is the discrete group $\Z^2$ and $\tilde \scT$ is the unique non-split extension of $\scT$ by $\pm 1$, then there are precisely three maximal isotropic subgroups of $\tilde \scT$ and none of these are tame.

Let $R$ denote the finite \'etale cover $\widehat{\tilde \scA} \to \widehat{{Z}}$ given by restriction of characters.

\begin{proposition}
 If $\scA$ is tame, then $R$ maps every component of $\widehat{\tilde \scA}$ isomorphically to a component of $\widehat{{Z}}$.
\end{proposition}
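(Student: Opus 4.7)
The plan is to reduce the claim to a structural computation: under the tameness hypothesis, the inclusion $Z \hookrightarrow \tilde\scA$ induces an isomorphism on torsion-free quotients, and the conclusion then follows formally by tracking components of the character spaces.

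First I would recall the general principle that for any topologically finitely generated abelian $\Qp$-analytic group $G$, choosing an isomorphism $G \cong G_{\tors} \oplus \Z_p^d \oplus \Z^r$ yields
\[
\widehat G \;\cong\; \widehat{G_{\tors}} \times \widehat{\Z_p^d} \times \widehat{\Z^r}.
\]
The first factor is a finite set, the second is an open polydisk, and the third is a power of $\mathbb{G}_m^{\mathrm{rig}}$, so the connected components of $\widehat G$ are in bijection with $\widehat{G_{\tors}}$, and each becomes identified (after a choice of basepoint) with $\widehat{G/G_{\tors}}$.

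Next I would unpack tameness at the level of $\tilde\scA \subseteq \tilde\scT$. As $\tilde\scA$ is an abelian extension of $\scA$ by the finite group $\mu$, the torsion subgroup $\tilde\scA_{\tors}$ is precisely the preimage of $\scA_{\tors}$, and clearly $Z \cap \tilde\scA_{\tors} = Z_{\tors}$. The identity $\scZ \scA_{\tors} = \scA$ lifts to $Z \cdot \tilde\scA_{\tors} = \tilde\scA$; combining these two facts gives that the induced map
\[
Z/Z_{\tors} \;\longrightarrow\; \tilde\scA/\tilde\scA_{\tors}
\]
is an isomorphism of torsion-free topologically finitely generated abelian $\Qp$-analytic groups. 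Dualising, one obtains an isomorphism of rigid analytic groups $\widehat{\tilde\scA/\tilde\scA_{\tors}} \xrightarrow{\sim} \widehat{Z/Z_{\tors}}$.

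Finally I would glue these two ingredients. The map $R$ respects the torsion decomposition: a character $\chi$ of $\tilde\scA$ lies in the component of $\widehat{\tilde\scA}$ indexed by $\chi|_{\tilde\scA_{\tors}}$ and is sent to a character of $Z$ lying in the component of $\widehat Z$ indexed by $\chi|_{Z_{\tors}}$. After fixing basepoint characters extending each torsion character, the restriction of $R$ to the corresponding component of $\widehat{\tilde\scA}$ becomes identified with the isomorphism $\widehat{\tilde\scA/\tilde\scA_{\tors}} \xrightarrow{\sim} \widehat{Z/Z_{\tors}}$ constructed above (translated by a fixed character of $Z_{\tors}$), and is therefore itself an isomorphism onto its image component in $\widehat Z$. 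I expect the only substantive step to be the pushout identity $\tilde\scA = Z \cdot \tilde\scA_{\tors}$ with $Z \cap \tilde\scA_{\tors} = Z_{\tors}$; the rest is bookkeeping with the general structure theory of character spaces of abelian $\Qp$-analytic groups.
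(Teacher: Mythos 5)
Your proposal is correct and follows essentially the same route as the paper's proof: both reduce the claim to the observation that tameness identifies the torsion-free parts of $Z$ and $\tilde\scA$ (the paper phrases this as $Z_\infty = \tilde\scA_\infty$ after fixing Emerton's direct-product splittings $G = G_{\tors} \times G_\infty$, while you phrase it as $Z/Z_{\tors} \xrightarrow{\sim} \tilde\scA/\tilde\scA_{\tors}$, which amounts to the same thing), and then invoke the standard description of $\widehat{G}$ in terms of $\widehat{G_{\tors}}$ and $\widehat{G/G_{\tors}}$.
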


\begin{proof}
 Recall that if $\scH$ is an abelian locally analytic group, the geometrically connected components of $\widehat \scH$ correspond bijectively with the characters of $\scH_{\mathrm{tors}}$. 

By \cite[Proposition 6.4.1]{emerton-memoir}, we can (non-uniquely) decompose $\tilde \scA$ and ${Z}$ as products $\tilde\scA = \tilde \scA_{\mathrm{tors}} \times \tilde \scA_\infty$ and ${Z} = {Z}_{\mathrm{tors}} \times {Z}_\infty$.
Moreover, it is clear that we may do this in such a way that ${Z}_{\infty} \subseteq \tilde \scA_{\infty}$.
The assumption that $\scA$ is tame, so that $\tilde \scA = {Z} \tilde \scA_{\mathrm{tors}}$, implies that in fact ${Z}_\infty = \tilde\scA_\infty$.

Since the contravariant functor $\widehat{(-)}$ takes direct products of groups to fibre products of rigid spaces, the map $\widehat{\tilde \scA} \to \widehat{{Z}}$ is obtained by taking the fibre product of the map of finite rigid spaces $\widehat{\tilde \scA_\mathrm{tors}} \to \widehat{{Z}_\mathrm{tors}}$ with the connected rigid space $\widehat{{Z}_\infty}$, from which the result is clear.
\end{proof}

\begin{corollary}
 For all sufficiently large $E$, there exists a map $S : \widehat{{Z}} \to \widehat{\tilde \scA}$ of rigid spaces over $E$ which is a section of $R$.
\end{corollary}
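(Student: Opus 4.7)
The plan is to deduce this directly from the preceding proposition. Since $\scA$ is tame, we know that $R: \widehat{\tilde\scA} \to \widehat{{Z}}$ restricts to an isomorphism on each connected component of $\widehat{\tilde\scA}$. The map $R$ has finite degree equal to $[\tilde\scA : {Z}] = [\scA : \scZ]$, so the preimage of each component of $\widehat{{Z}}$ is a disjoint union of exactly $[\scA : \scZ]$ components of $\widehat{\tilde\scA}$, each of which maps isomorphically onto its image.

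The natural strategy is then to construct $S$ componentwise: for each component $C$ of $\widehat{{Z}}$, choose one component of $\widehat{\tilde\scA}$ lying over $C$, and let $S|_C$ be the inverse of the isomorphism that $R$ restricts to on the chosen preimage component. Glueing over all components of $\widehat{{Z}}$ yields a map $S$ of rigid spaces which is evidently a section of $R$.

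The only subtlety — and the reason for the hypothesis that $E$ be sufficiently large — is that the individual components of $\widehat{{Z}}$ and $\widehat{\tilde\scA}$ need not be defined over $E$, since $\Gal(\overline{E}/E)$ may permute them non-trivially; a Galois-equivariant choice of preimage component is what is actually required to descend the section to~$E$. To handle this I would invoke the fact (used already in the proof of the previous proposition) that the geometrically connected components of the character space of a topologically finitely generated abelian locally analytic group $\scH$ are in canonical bijection with the characters of the finite group $\scH_{\mathrm{tors}}$, and the Galois action on components corresponds to the tautological Galois action on these characters. Thus, after replacing $E$ by a finite extension containing all roots of unity of order dividing the exponent of $\tilde\scA_{\mathrm{tors}}$, every character of $\tilde\scA_{\mathrm{tors}}$ (and hence of ${Z}_{\mathrm{tors}}$) takes values in $E$, so each geometric component of $\widehat{\tilde\scA}$ and of $\widehat{{Z}}$ is already defined over $E$. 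For such $E$, the componentwise construction above gives the desired section $S$ over $E$.

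The bulk of the work is already done in the previous proposition; the main ``step'' here is just to verify that a Galois-equivariant choice of preimage-component exists after enlarging $E$, and there is essentially no obstacle once one unpacks the description of components of $\widehat{\scH}$ in terms of characters of $\scH_{\mathrm{tors}}$.
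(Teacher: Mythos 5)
Your proof is correct and follows essentially the same route as the paper: reduce to the finite part of the covering $\widehat{\tilde\scA_{\mathrm{tors}}} \to \widehat{{Z}_{\mathrm{tors}}}$ (via the identification of geometric components with characters of the torsion subgroup) and pick a section after enlarging $E$ so that all the relevant characters are $E$-rational. The paper states this more tersely---``both spaces are finite unions of points'' once $\zeta_r \in E$, where $r$ is the exponent of $\tilde\scA_{\mathrm{tors}}$---but the underlying argument is the same.
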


\begin{proof}
 By the same argument as in the previous proposition, it suffices to show that the map $\widehat{\tilde \scA_\mathrm{tors}} \to \widehat{{Z}_\mathrm{tors}}$ admits a section; this is clear, since both spaces are finite unions of points.
\end{proof}

\begin{remark}
 It suffices to assume that $\zeta_r \in E$, where $r$ is the exponent of $\tilde \scA_{\mathrm{tors}}$. If $E$ does not contain enough roots of unity then such a section may well not exist. Note also that the choice of the section $S$ is highly non-canonical.
\end{remark}

\begin{theorem}
 If $\tilde\scT$ contains a tame maximal commutative subgroup, and $E$ is sufficiently large, then there is an equivalence of categories 
 \[ \Repess({Z})_\varepsilon \stackrel\sim\to \Repess(\tilde\scT)_\varepsilon.\]
\end{theorem}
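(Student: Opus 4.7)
My plan is to build the equivalence directly from the section $S\colon \widehat{Z} \to \widehat{\tilde\scA}$ supplied by the preceding corollary, whose existence uses both the tameness of $\scA$ and the largeness of $E$. Recall that $S$ is a section of the finite \'etale restriction map $R\colon \widehat{\tilde\scA} \to \widehat{Z}$, and its image $S(\widehat{Z}_\varepsilon)$ is a union of connected components of $\widehat{\tilde\scA}$. Equivalently, $S$ corresponds to a locally analytic character $\sigma\colon \tilde\scA \to \cC^{\an}(\widehat{Z})^\times$ whose restriction to ${Z}$ is the universal character of ${Z}$ and whose restriction to $\mu$ is $\varepsilon$.

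Using $\sigma$ I would define the functor $F\colon \Repess({Z})_\varepsilon \to \Repess(\tilde\scT)_\varepsilon$ in two steps: given $W \in \Repess({Z})_\varepsilon$, which carries a natural $\cC^{\an}(\widehat{Z})$-module structure, let $W^\sigma$ denote $W$ equipped with the $\tilde\scA$-action $a\cdot w := \sigma(a)\, w$; this is an essentially admissible representation of $\tilde\scA$ whose restriction to ${Z}$ recovers $W$. Then set $F(W) := \Ind_{\tilde\scA}^{\tilde\scT}(W^\sigma)$, which is essentially admissible because $[\tilde\scT\colon \tilde\scA]$ is finite and the induction is a finite direct sum of conjugates. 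In the opposite direction, for $V \in \Repess(\tilde\scT)_\varepsilon$ I would check first that $V|_{\tilde\scA}$ is essentially admissible as a representation of the abelian group $\tilde\scA$ (tameness makes $\widehat{\tilde\scA}$ a finite \'etale cover of $\widehat{Z}$, so the $\cC^{\an}(\widehat{Z})$-action on $V$ lifts to $\cC^{\an}(\widehat{\tilde\scA})$); this gives a coherent sheaf $\cE_V$ on $\widehat{\tilde\scA}$ supported on components extending $\varepsilon$, and I define $G(V) := S^*\cE_V$, regarded as an object of $\Repess({Z})_\varepsilon$.

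To verify that $F$ and $G$ are quasi-inverse I would work entirely on the sheaf side. The key input is that the conjugation action of $\tilde\scT/\tilde\scA$ on $\widehat{\tilde\scA}$ permutes each fibre of $R$ simply transitively: this is the content of Lemma~\ref{characters} viewed geometrically. Since $S$ selects a single component in each such orbit, for $G\circ F$ one has $\cE_{F(W)}|_{\tilde\scA} \cong \bigoplus_{t\in\tilde\scT/\tilde\scA} t_* S_*\cE_W$, and applying $S^*$ collapses this to $S^*S_*\cE_W = \cE_W$, since the translates $t\cdot S(\widehat{Z}_\varepsilon)$ are disjoint from $S(\widehat{Z}_\varepsilon)$ for $t$ representing a nontrivial class in $\tilde\scT/\tilde\scA$. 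The composition $F\circ G$ is handled analogously, using the $\tilde\scT$-equivariance carried by $\cE_V$ to reconstruct $\cE_V$ from its restriction to the single orbit representative $S(\widehat{Z}_\varepsilon)$.

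The main technical obstacle will be this descent step for $F\circ G$: one has to show that the $\tilde\scT$-action on $V$ naturally equips $\cE_V$ with a $\tilde\scT/\tilde\scA$-equivariance covering the conjugation action on $\widehat{\tilde\scA}$, and that such an equivariant coherent sheaf is determined by its restriction to any single representative in each $\tilde\scT/\tilde\scA$-orbit of components. This is the precise place where the transitivity provided by Lemma~\ref{characters} is needed; once it is in hand, functoriality and the equivalence of categories follow formally from the anti-equivalence between $\Repess$ for abelian locally analytic groups and coherent sheaves on the corresponding character spaces.
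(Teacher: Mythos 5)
Your proposal follows essentially the same route as the paper: $F$ is induction from $\tilde\scA$ after extending the $Z$-action by the section $S$, and $G$ is restriction to $\tilde\scA$ followed by selecting the component picked out by $S$. Your $W^\sigma$ is exactly the paper's $F_\scA(W)$ (you package the extension via a universal character $\sigma$ valued in $\cC^{\an}(\widehat Z)$, whereas the paper spells it out component-by-component over $\widehat{{Z}_{\mathrm{tors}}}$), and your $S^*$ is the paper's $\operatorname{Res}^{{Z}}_{\tilde\scA}\circ\pr_I$ transported across the anti-equivalence with coherent sheaves. Where you diverge slightly is in verifying $F\circ G\cong\mathrm{id}$: you correctly flag this as the delicate step and propose handling it by equivariant descent over the free $\tilde\scT/\tilde\scA$-action on components, while the paper instead writes down the natural transformation explicitly, $f\mapsto\sum_i t_i\,f(t_i^{-1})$, where $t_1,\dots,t_s$ conjugate $\psi=S(\chi)$ to the full set of its conjugates $\psi_1,\dots,\psi_s$. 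Both approaches rest on the same input, Lemma~\ref{characters}, and your descent argument is a valid (if less concrete) way to finish; to make it fully rigorous you would need to state the descent claim precisely — that a $\tilde\scT/\tilde\scA$-equivariant coherent sheaf on $\widehat{\tilde\scA}$ is recovered from its restriction to a fundamental domain for the (free) action on components — and check that the $\tilde\scT$-structure on $V$ really does furnish such an equivariance, which is essentially what the paper's explicit formula accomplishes in one stroke. One small inaccuracy in passing: $R\colon\widehat{\tilde\scA}\to\widehat{Z}$ is finite \'etale whether or not $\scA$ is tame; tameness is only needed for $R$ to split component-wise, i.e.\ for the section $S$ to exist, which is also what forces the enlargement of $E$.
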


\begin{proof}
 Let $V$ be an object of $\Repess({Z})$. We decompose $V$ in the form
 \[ V = \bigoplus_{\chi} V^\chi\]
 where the sum is over the characters of ${Z}_{\mathrm{tors}}$. We let $F_\scA(V)$ be the representation of $\scA$ which is isomorphic to $V$ as a representation of ${Z}_\infty$, but with $\tilde \scA_{\mathrm{tors}}$ acting on the summand $V_\chi$ by the extension of $\chi$ to a character of $\tilde \scA_{\mathrm{tors}}$ determined by the section $S$ above.

It is clear that $F_\scA$ is a functor $\Repess({Z})_\varepsilon \to \Repess(\tilde \scA)_\varepsilon$. Composing this with the functor $\Ind_{\tilde \scA}^{\tilde\scT} : \Repess(\tilde \scA) \to \Repess(\tilde\scT)$ gives a functor $F : \Repess({Z})_\varepsilon \to \Repess(\tilde\scT)_\varepsilon$.

We construct an inverse functor $G : \Repess(\tilde\scT) \to \Repess({Z})$ as follows. Restriction to $\tilde \scA$ gives a functor $\Repess(\tilde\scT) \to \Repess(\tilde \scA)$. We may decompose $\Repess(\tilde \scA)$ as a direct sum of subcategories corresponding to the characters of $\tilde \scA_\mathrm{tors}$. 
 
The choice of section $S$ above determines a subset $I \subset \widehat{\tilde \scA_\mathrm{tors}}$. The functor $\pr_{I} : \Repess(\tilde \scA) \to \Repess(\tilde \scA)$ given by $V \mapsto \bigoplus_{\psi \in I} V^\psi$ is well-defined.
We define $G = \operatorname{Res}_{\tilde \scA}^{{Z}} \circ \pr_I \circ \operatorname{Res}_{\tilde\scT}^{\tilde\scA}$, which clearly defines a functor $\Repess(\tilde\scT)_\varepsilon \to \Repess({Z})_\varepsilon$.

 We claim that these functors are inverse to each other. Let us first consider the composition $G \circ F : \Repess({Z})_\varepsilon \to \Repess({Z})_\varepsilon$. We note that there is a natural isomorphism of functors $\Repess(\tilde \scA) \to \Repess(\tilde \scA)$ between the functors $\operatorname{Res}_{\tilde\scT}^{\tilde \scA} \circ \Ind_{\tilde \scA}^{\tilde\scT}$ and the functor mapping $V$ to $\bigoplus_{t \in \tilde\scT / \tilde \scA} V^t$. Using lemma \ref{characters}, we deduce that $G \circ F$ is naturally isomorphic to the identity functor.

 On the other hand, let $W$ be an object of $\Repess(\tilde\scT)_\varepsilon$. Since all our functors commute with direct sums, we may as well assume that ${Z}_{\mathrm{tors}}$ acts on $W$ via a character $\chi$ (extending $\varepsilon$). Then $\psi = S(\chi)$ is a choice of extension of $\chi$ to $\tilde \scA_\mathrm{tors}$. We find that
 \[
        F(G(W))
        =
        \Ind_{\tilde A}^{\tilde\scT} \left( W^{[\tilde \scA_\mathrm{tors} = \psi]} \right).
\]
 We must construct a natural transformation between this and the identity functor. Let $\psi_1, \dots, \psi_s$ be the conjugates of $\psi$, and fix $t_1, \dots, t_s \in \tilde\scT$ such that $\psi(t_i^{-1} a t_i) = \psi_i(a)$. Then the map $f \mapsto \sum t_i f(t_i^{-1}) : F(G(W)) \to W$ gives a natural transformation between $F \circ G$ and the identity functor.
\end{proof}

\begin{remark}
If $\scT$ has no tame maximal isotropic subgroup, then the category
 $\Repess(\tilde\scT)_{\varepsilon}$ may genuinely fail to be isomorphic to $\Repess({Z})_{\varepsilon}$, even if the scalar field $E$ is large.
 
For instance, let $\tilde \scT$ be the extension of $\Z^2$ by $\pm 1$ mentioned above, and $\varepsilon$ the nontrivial character. If we set $\mathcal{D} = E[\tilde \scT] \otimes_{E[{Z}]} \cC^{\an}(\widehat{{Z}}_{\varepsilon}, E)$, then $\mathcal{D}$ is a Fr\'echet--Stein algebra, and $\Repess(\tilde \scT)$ is precisely the opposite category of coadmissible $\mathcal{D}$-modules; but $\mathcal{D}$ is a non-trivial central simple algebra of rank 4 over $C^{\an}(\widehat{{Z}}_{\varepsilon}, E)$, and hence the categories of coadmissible modules over these rings are not equivalent.

 Nonetheless, restriction of representations certainly gives a functor $\Repess(\tilde \scT)_{\varepsilon} \to \Repess({Z})_{\varepsilon}$, and one deduces that for any $V \in \Repess(\tilde \scT)_\varepsilon$ there is a coherent sheaf on $\widehat{{Z}}_{\varepsilon}$ whose support consists precisely of the characters of ${Z}$ appearing in $V$; the disadvantage is that since the restriction functor is not full, morphisms between such sheaves do not necessarily correspond to morphisms of $\tilde \scT$-representations.
\end{remark}

We may summarise the discussion as follows:

\begin{theorem}\label{thm:stonevonneumann}
        Let $\tilde \scT$ be a topological central extension by $\mu$
        of a topologically finitely generated abelian locally $\Qp$-analytic group
        $\scT$, and let ${Z}$ be the centre of $\tilde \scT$. Fix an injective character
        $\varepsilon$ of $\mu$.
        \begin{enumerate}
                \item
                For every $\Gal(\overline{E} / E)$-orbit of continuous characters $\chi: {Z} \to \overline{E}^\times$ extending $\varepsilon$, there is a unique irreducible locally analytic representation $V_\chi$
                of $\tilde \scT$ having central character $\chi$.
                \item
                If $W$ is an essentially admissible locally analytic representation of
                $\tilde \scT$ on which $\mu$ acts via $\varepsilon$, then there is a closed
                rigid-analytic subvariety $\operatorname{Supp}(W)$ of $\widehat{{Z}}$
                having the property that $\Hom_{\tilde\scT}(V_\chi, W) \ne 0$ if and  only if $\chi \in
                \operatorname{Supp}(W)$.
                \item
                If, moreover, $\scT$ has a tame maximal isotropic subgroup, then
                the category of locally analytic representations of $\tilde \scT$ is
                anti-equivalent to the category of rigid-analytic sheaves on
                $\widehat{{Z}}_\varepsilon$, and $\operatorname{Supp}(W)$
                is the support of the sheaf corresponding to $W$.
         \end{enumerate}
\end{theorem}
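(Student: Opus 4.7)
The plan is to assemble Theorem \ref{thm:stonevonneumann} from the constituent results already established in Section \ref{sect:stonevonneumann}; very little new work should be required, since each of the three parts is essentially a restatement of a result proved above.

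For part (1), I would first invoke Proposition \ref{prop:vchi} to produce $V_\chi$ when $\chi$ is defined over $E$, as the unique simple module over the central simple algebra $A_\chi = E[\tilde\scT]/\langle z - \chi(z) : z \in Z\rangle$. For a general $\Gal(\overline{E}/E)$-orbit of characters, let $F/E$ be the subfield of $\overline{E}$ generated by the values of $\chi$; applying Proposition \ref{prop:vchi} over $F$ yields an irreducible $F$-linear representation with central character $\chi$, and restriction of scalars to $E$ gives the desired $V_\chi$. Since the values of $\chi$ generate $F$ over $E$, the restriction remains irreducible, and it depends only on the Galois orbit (not on the chosen representative). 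Uniqueness follows because any essentially admissible $\tilde\scT$-representation with central character $\chi$ is necessarily finite-dimensional (as $[\tilde\scT : Z] < \infty$), hence is finitely generated as a module over the simple algebra $A_\chi$, and must therefore be a direct sum of copies of $V_\chi$.

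For part (2), the restriction $\operatorname{Res}^{\tilde\scT}_Z(W)$ is an object of $\Repess(Z)_\varepsilon$, which under the standard anti-equivalence of categories (the discussion following Proposition 6.4.10 of \cite{emerton-memoir}, recalled in Section 3) corresponds to a coherent sheaf on $\widehat{Z}_\varepsilon$. I would define $\operatorname{Supp}(W)$ to be the support of this sheaf; this is automatically a closed rigid-analytic subvariety. By the sheaf-theoretic description, $\chi \in \operatorname{Supp}(W)$ if and only if $\Hom_Z(\chi, W) \ne 0$, and Proposition \ref{prop:homnonzero} shows this is in turn equivalent to $\Hom_{\tilde\scT}(V_\chi, W) \ne 0$.

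For part (3), I would invoke directly the equivalence of categories proved in the tame case, which under the hypothesis that $\scT$ contains a tame maximal isotropic subgroup (and that $E$ contains sufficiently many roots of unity) produces mutually inverse functors $F : \Repess(Z)_\varepsilon \to \Repess(\tilde\scT)_\varepsilon$ and $G : \Repess(\tilde\scT)_\varepsilon \to \Repess(Z)_\varepsilon$, constructed from a section $S : \widehat{Z} \to \widehat{\tilde\scA}$ of the restriction map. Composing this with the standard sheaf-theoretic anti-equivalence between $\Repess(Z)_\varepsilon$ and coherent sheaves on $\widehat{Z}_\varepsilon$ produces the desired anti-equivalence. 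Compatibility with $\operatorname{Supp}$ is transparent: $G$ is built from the restriction functor, which intertwines central characters, so the sheaf corresponding under the anti-equivalence to $W$ coincides with the sheaf of part (2) and has the same support. I do not expect any serious obstacle in the assembly; all the substantive work has already been carried out in constructing the central simple algebra $A_\chi$ and the section $S$ under the tameness hypothesis.
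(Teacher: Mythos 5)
Your proposal matches the paper's own approach: Theorem \ref{thm:stonevonneumann} is explicitly a summary of the preceding results, and you correctly assemble it from Proposition \ref{prop:vchi} (with Galois descent to handle characters not defined over $E$), Proposition \ref{prop:homnonzero}, and the tame equivalence of categories. One small inaccuracy in part (3): the sheaf attached to $W$ under the anti-equivalence is the sheaf of $G(W)$, which does \emph{not} coincide with the sheaf of $\Rest^{\tilde\scT}_{Z}(W)$ from part (2) — $G(W)$ is a proper sub-object, picking out only the $\tilde\scA_{\mathrm{tors}}$-isotypic components indexed by $I$. The two sheaves do however have the same support (if $V_\chi \hookrightarrow W$ then $G(V_\chi) \neq 0$, since tameness ensures the chosen extension $S(\chi|_{Z_{\mathrm{tors}}})$ occurs among the $\tilde\scA$-eigencharacters of $V_\chi$), so your conclusion stands.
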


\begin{theorem}
        Let $\bG$ be semi-simple, simply connected and split over $k_{\gp}$
        and let $\tbG$ be the canonical metaplectic extension of $\bG$ by $\mu_{m}$.
        Let $\bT$ be a maximal split torus in $\bG$, and let $\scT$ and $\tilde\scT$ be
        as before.
        If $\gp$ does not divide $m$, then there is a tame maximal isotropic
        subgroup of $\scT$.
\end{theorem}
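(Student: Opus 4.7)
The plan is to exhibit an explicit tame maximal isotropic subgroup of $\scT$ built from Teichm\"uller lifts. Identify $\scT$ canonically with $Y \otimes_\Z k_\gp^\times$, where $Y$ is the cocharacter lattice of $\bT$; since $\bG$ is split simply connected semisimple, $Y$ is the coroot lattice, free of rank $r := \dim \bT$. Matsumoto's construction of the canonical metaplectic cover on a split torus gives the commutator formula
\[ [y_1 \otimes x_1, y_2 \otimes x_2] = (x_1, x_2)_m^{B(y_1, y_2)}, \]
where $B: Y \times Y \to \Z$ is the symmetric bilinear form associated to a Weyl-invariant quadratic form on $Y$, and $(\cdot,\cdot)_m$ denotes the $m$-th power Hilbert symbol of $k_\gp$.

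Let $\cO$ be the ring of integers of $k_\gp$, $\pi$ a uniformizer, $W \subset \cO^\times$ the subgroup of all roots of unity, and $U^1 = 1 + \pi\cO$ the principal units. Since $\gp \nmid m$, the Hilbert symbol is tame: it vanishes on $\cO^\times \times \cO^\times$, and both $U^1$ and the $p$-power-torsion part of $W$ are pro-$p$, hence $m$-divisible. My proposed candidate is
\[ \scA := \scZ \cdot (Y \otimes W). \]
Isotropy (i.e.~abelianness of $\tilde\scA$) is immediate from the vanishing of the Hilbert symbol on units and the centrality of $\scZ$; and tameness ($\scA = \scZ \cdot \scA_{\tors}$) is immediate because $Y \otimes W$ consists of torsion elements of $\scT$. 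The substantive step is maximality.

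To prove maximality, I must show that the image of $Y \otimes W$ in the non-degenerate symplectic quotient $\scT/\scZ$ has order $\sqrt{|\scT/\scZ|}$. Since $\scT^m \subseteq \scZ$, I work in $\scT/\scT^m$ and compute the radical $R := \scZ/\scT^m$ of the induced alternating pairing. The $m$-divisibility of $U^1$ gives $k_\gp^\times/(k_\gp^\times)^m \cong \Z/m \oplus \mu_m$, so $\scT/\scT^m \cong V_1 \oplus V_2$ with $V_1 := Y/mY$ and $V_2 := Y \otimes \mu_m$, each of order $m^r$, and the image of $Y \otimes W$ is exactly $V_2$. Choosing a generator $\zeta$ of $\mu_m$, the induced pairing on $(\Z/m)^{2r}$ reads
\[ \omega\bigl((a,c),(a',c')\bigr) = \varepsilon^{a^T B a'} \cdot \zeta^{a^T B c' - c^T B a'}, \]
where $\varepsilon := (\pi,\pi)_m \in \mu_2$. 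A direct calculation gives $R = K \oplus K$ with $K := \ker(B \bmod m) \subseteq (\Z/m)^r$: pairing $(a_0,c_0)$ against $(0,c')$ kills the $\varepsilon$-factor and forces $a_0 \in K$; then pairing against $(a',0)$ forces $c_0 \in K$, the $\varepsilon$-exponent automatically vanishing because $a_0$ is already in the kernel. Hence $|R| = |K|^2$ and $V_2 \cap R$ has order $|K|$, so the image of $V_2$ in $\scT/\scZ$ has order $m^r/|K| = \sqrt{|\scT/\scZ|}$, which is the maximum for an isotropic subgroup. The main technical hurdle is invoking the explicit Matsumoto commutator formula; once this is available, the radical computation works uniformly in $m$, the key point being that the potentially obstructing $\mu_2$-valued factor $\varepsilon$ on $V_1 \times V_1$ cannot enlarge the radical.
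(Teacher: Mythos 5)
Your proof is correct, and it uses essentially the same candidate subgroup as the paper's, but the maximality step is argued by a genuinely different route. The paper writes the commutator in coordinates via Matsumoto's formula, sets $\scA = \scZ \cdot (\cO^{\times})^{n}$, and proves maximality \emph{directly}: if $\underline{u}$ commutes with every unit tuple, then $\prod_i u_i^{m(i,j)} \in \cO^{\times} \cdot k_\gp^{\times m}$ for each $j$, because for fixed $y$ the tame symbol $(t,y)_m$ is trivial for all $t \in \cO^{\times}$ exactly when $v(y) \equiv 0 \pmod m$ --- which is precisely the membership condition for $\scA$. Your candidate $\scA = \scZ \cdot (Y \otimes W)$ coincides with the paper's, since $Y \otimes U^{1}$ and $Y \otimes W_p$ both lie in $\scT^{m} \subseteq \scZ$ when $\gp \nmid m$. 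Where you diverge is in proving maximality by a \emph{counting} argument: you compute the radical $\scZ/\scT^{m}$ of the alternating pairing on $\scT/\scT^m \cong V_1 \oplus V_2$ explicitly as $K \oplus K$ with $K = \ker(B \bmod m)$, and check that the image of $V_2$ in $\scT/\scZ$ has order $m^{r}/|K| = \sqrt{|\scT/\scZ|}$. This is correct (and I verified the delicate point: once $a_0 \in K$, the exponent $a_0^{T}Ba'$ is divisible by $m$, which for $m$ even kills the $\mu_2$-valued $\varepsilon$-factor, and for $m$ odd $\varepsilon$ is already trivial; in fact $B(x,x) = 2Q(x)$ is always even, so the restriction to $V_1 \times V_1$ causes no trouble). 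The paper's direct argument is shorter and avoids the explicit radical computation; yours yields the additional structural information $\scZ/\scT^m \cong K \oplus K$, and makes explicit the source of the index $[\scT : \scZ] = (m^r/|K|)^2$. Both approaches hinge on the same two facts --- Matsumoto's commutator formula and tameness of the Hilbert symbol at $\gp \nmid m$.
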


\begin{proof}
        The torus $\bT$ is split, so we have an isomorphism
        $\bT \cong \G_{m}^{n}$ for some $n$.
        We shall regard elements of $\scT$ as vectors $\underline{t}=(t_{i})_{i=1}^{n}$ with $t_{i}\in k_{\gp}^{\times}$.
        By \cite[Lemme II.5.4 and Lemme II.5.8]{matsumoto} we know that the commutator has
        the form
        \[
                [\underline{t},\underline{u}]
                =
                \prod_{i,j} (t_{i},u_{j})_{\gp,m}^{m(i,j)},
        \]
        where $m(i,j)$ are certain integers.
        One easily shows that $\scZ$ is the set of elements $\underline{t}$ satisfying for
        $j=1,\ldots,n$ the following relation:
        \[
                \prod_{i=1}^{n} t_{i}^{m(i,j)}
                \in
                k_{\gp}^{\times m}.
        \]
        Clearly, $\scZ$ contains $\scT^{m}$.
        Define a subgroup $\scA$ of $\scT$ by the relations
        \[
                \prod_{i=1}^{n} t_{i}^{m(i,j)}
                \in
                \cO^{\times} \cdot k_{\gp}^{\times m},
                \qquad
                j=1,\ldots, n.
        \]
        It follows that $\scA= \scZ \cdot (\cO^{\times})^{n}$.
        We claim that $\scA$ is a tame maximal isotropic subgroup of $\scT$.
        
        We first show that $\scA$ is isotropic.
        The elements of $\scA$ are, up to an element of $\scZ$, in the subgroup $(\cO^{\times})^{n}$,
        and so it suffices to show that $(\cO^{\times})^{n}$ is isotropic.
        This in turn follows from the fact that $(-,-)_{\gp,m}$ is the tame symbol (as $\gp$ does not divide $m$)
        and so is trivial on $\cO^{\times} \times \cO^{\times}$.
        
        We next show that $\scA$ is maximal.
        Assume that $\underline{u}$ satisfies $[\underline{u},\underline{t}]=1$ for all $\underline{t}\in \scA$.
        This implies for each $j$ and every element $t\in \cO^{\times}$ the relation
        \[
                \prod_{i=1}^{n} (t,u_{j})_{\gp,m}^{m(i,j)}
                =
                1.
        \]
        Hence $\prod_{i=1}^{n} u_{j}^{m(i,j)}$ is an element of $\cO^{\times}\cdot k_{\gp}^{\times m}$, and
        so $\underline{u}$ is in $\scA$.
        
        Finally, we show that $\scA$ is tame.
        To see this, it is sufficient to note that the cosets of $\cO^{\times}/\cO^{\times m}$ have representatives which are roots of unity (Teichm\"uller representatives).
        Again, we have used the fact that $\gp$ does not divide $m$, and so every element in $\cO^{\times}$
        which is congruent to $1$ modulo $\gp$ is an $m$-th power.
\end{proof}

\subsection{Implications for eigenvarieties}

We now return to the situation of section \ref{sect:eigenvar}. The group $\tilde \scT$ is an extension of the abelian, topologically finitely generated $p$-adic analytic group $\scT$ by $\mu$, so we may describe its essentially admissible representations by means of Theorem \ref{thm:stonevonneumann}. As before, we write ${Z}$ for the centre of $\tilde \scT$.

Recall that we have shown that, for any essentially admissible $\tilde \scG$-representation $V$ equipped with an action of $\cH^\gp$, there exists a rigid-analytic subspace $\Eig(V) \subseteq \widehat{{Z}} \times \Spec(\cH^\sph)$, and a coherent sheaf of right $\mathcal{H}^\mathfrak{p}$-modules $\scE$ on $\Eig(V)$, such that the fibre of $\scE$ at a point $(\chi, \lambda) \in \widehat{Z}$ is canonically isomorphic as an $\mathcal{H}^\mathfrak{p}$-module to the dual of the $({Z} = \chi, \cH^{\sph} = \lambda)$ eigenspace of $V$.

As before, let $\varepsilon$ be an injective character of $\mu$. We write $\Eig(V)_\varepsilon$ for the intersection of $\Eig(V)$ with $\widehat{{Z}}_\varepsilon \times \Spec(\cH^\sph) \subset \widehat{{Z}} \times \Spec(\cH^\sph)$. From Theorem \ref{thm:stonevonneumann}, we deduce:

\begin{corollary}\mbox{~}
 \begin{enumerate}
  \item A point $(\chi, \lambda) \in \widehat{{Z}}_\varepsilon \times \Spec(\cH^\sph)$ lies in $\Eig(V)_\varepsilon$ if and only if 
  \[ \Hom_{\tilde \scT}(V_{\chi}, V)[\cH^{\sph} = \lambda] \ne 0.\]
  \item If $\tilde \scT$ has a tame maximal isotropic subgroup, we may construct a coherent sheaf $\scE' \subseteq \scE$ on $\widehat{{Z}}_\varepsilon $ with an action of $\cH^\gp$, such that the fibre of $\scE'$ at $\chi$ is canonically isomorphic to the dual of $\Hom_{\tilde \scT}(V_{\chi}, V)[\cH^{\sph} = \lambda]$.
 \end{enumerate}
\end{corollary}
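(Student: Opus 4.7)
The plan is to deduce both statements as fairly direct corollaries of the structural results from the preceding two subsections, applied to $V$ together with its commuting $\cH^{\gp}$-action.

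For part (1), I would use the definition: $(\chi,\lambda) \in \Eig(V)_\varepsilon$ if and only if the simultaneous eigenspace $V[Z=\chi,\,\cH^{\sph}=\lambda]$ is nonzero. Since $\cH^{\sph}$ commutes with the action of $\tilde\scT$, the space $V[\cH^{\sph}=\lambda]$ is a closed $\tilde\scT$-stable subspace of $V$ on which $\mu$ acts via $\varepsilon$, and it inherits essential admissibility from $V$. Proposition~\ref{prop:homnonzero} applied to this subspace yields
\[
        V[\cH^{\sph}=\lambda][Z=\chi] \ne 0
        \iff \Hom_{\tilde\scT}\bigl(V_{\chi},\, V[\cH^{\sph}=\lambda]\bigr) \ne 0,
\]
and because $V_{\chi}$ is finite-dimensional and the two actions commute, the right-hand side coincides with $\Hom_{\tilde\scT}(V_{\chi}, V)[\cH^{\sph}=\lambda]$.

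For part (2), I would invoke Theorem~\ref{thm:stonevonneumann}: under the tameness hypothesis, we have mutually inverse equivalences $F\colon \Repess(Z)_{\varepsilon} \leftrightarrows \Repess(\tilde\scT)_{\varepsilon} \colon G$. By construction, $G(V)$ is the $Z$-subrepresentation $\bigoplus_{\psi \in I} V^{\psi}$ of $V|_{\tilde\scA}$ cut out by the image of the section $S$, and is therefore a $Z$-direct summand of $V|_{Z}$. Because $\cH^{\gp}$ commutes with $\tilde\scT$, it preserves this $\tilde\scA_{\tors}$-isotypic decomposition and so acts on $G(V)$. I then define $\scE'$ to be the coherent sheaf on $\widehat{Z}_{\varepsilon}$ (equipped with its commuting $\cH^{\gp}$-action) corresponding to $G(V)$ under the usual anti-equivalence between $\Repess(Z)_{\varepsilon}$ and coherent sheaves on $\widehat{Z}_{\varepsilon}$. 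Dualizing the split surjection $V|_{Z} \twoheadrightarrow G(V)$ gives the required inclusion $\scE' \hookrightarrow \scE$ of coherent sheaves.

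To identify the fibres of $\scE'$, I would first show that $F(\chi)\cong V_{\chi}$. By construction $F(\chi) = \Ind_{\tilde\scA}^{\tilde\scT}(F_{\scA}(\chi))$ is a representation of $\tilde\scT$ of dimension $d = [\tilde\scT : \tilde\scA]$ with central character $\chi$, and it is irreducible, so by the uniqueness clause of Proposition~\ref{prop:vchi} it must be isomorphic to $V_{\chi}$. The equivalence $G$ then supplies a natural, $\cH^{\gp}$-equivariant isomorphism
\[
        G(V)[Z=\chi] = \Hom_{Z}(\chi,\, G(V)) \cong \Hom_{\tilde\scT}(F(\chi),\, V) = \Hom_{\tilde\scT}(V_{\chi}, V),
\]
whose left-hand side is dual to the fibre of $\scE'$ at $\chi$. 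Taking $\cH^{\sph}=\lambda$ isotypic parts yields the formula in the statement. The only substantive verification is the compatibility of the equivalence $F\leftrightarrow G$ with the commuting $\cH^{\gp}$-action, which is automatic because all of the constructions in Theorem~\ref{thm:stonevonneumann} use only the $\tilde\scT$-structure on the representations involved.
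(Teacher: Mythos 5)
Your proposal is correct and follows exactly the route the paper intends: the corollary is stated there as an immediate consequence of Theorem \ref{thm:stonevonneumann}, and your argument simply makes explicit the two ingredients — Proposition \ref{prop:homnonzero} applied to the closed $\tilde\scT$-stable subspace $V[\cH^{\sph}=\lambda]$ for part (1), and the equivalence $F \leftrightarrow G$ (with the identification $F(\chi)\cong V_\chi$ and the $\cH^{\gp}$-equivariance of the $\tilde\scA_{\tors}$-isotypic projection) for part (2). No gaps.
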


\providecommand{\bysame}{\leavevmode\hbox to3em{\hrulefill}\thinspace}
\providecommand{\MR}[1]{}
\renewcommand{\MR}[1]{%
 MR \href{http://www.ams.org/mathscinet-getitem?mr=#1}{#1}.
}
\newcommand{\articlehref}[2]{\href{#1}{#2}}

\end{document}